\begin{document}
\setlength{\baselineskip}{16pt}

\parindent 0.5cm
\evensidemargin 0cm \oddsidemargin 0cm \topmargin 0cm \textheight 22cm \textwidth 16cm \footskip 2cm \headsep
0cm

\newtheorem{theorem}{Theorem}[section]
\newtheorem{lemma}[theorem]{Lemma}
\newtheorem{proposition}[theorem]{Proposition}
\newtheorem{definition}[theorem]{Definition}
\newtheorem{example}[theorem]{Example}
\newtheorem{corollary}[theorem]{Corollary}

\newtheorem{remark}[theorem]{Remark}

\numberwithin{equation}{section}

\def\p{\partial}
\def\I{\textit}
\def\R{\mathbb R}
\def\C{\mathbb C}
\def\u{\underline}
\def\l{\lambda}
\def\a{\alpha}
\def\O{\Omega}
\def\e{\epsilon}
\def\ls{\lambda^*}
\def\D{\displaystyle}
\def\wyx{ \frac{w(y,t)}{w(x,t)}}
\def\imp{\Rightarrow}
\def\tE{\tilde E}
\def\tX{\tilde X}
\def\tH{\tilde H}
\def\tu{\tilde u}
\def\d{\mathcal D}
\def\aa{\mathcal A}
\def\DH{\mathcal D(\tH)}
\def\bE{\bar E}
\def\bH{\bar H}
\def\M{\mathcal M}
\renewcommand{\labelenumi}{(\arabic{enumi})}

\def\disp{\displaystyle}
\def\undertex#1{$\underline{\hbox{#1}}$}
\def\card{\mathop{\hbox{card}}}
\def\sgn{\mathop{\hbox{sgn}}}
\def\exp{\mathop{\hbox{exp}}}
\def\OFP{(\Omega,{\cal F},\PP)}
\newcommand\JM{Mierczy\'nski}
\newcommand\RR{\ensuremath{\mathbb{R}}}
\newcommand\CC{\ensuremath{\mathbb{C}}}
\newcommand\QQ{\ensuremath{\mathbb{Q}}}
\newcommand\ZZ{\ensuremath{\mathbb{Z}}}
\newcommand\NN{\ensuremath{\mathbb{N}}}
\newcommand\PP{\ensuremath{\mathbb{P}}}
\newcommand\abs[1]{\ensuremath{\lvert#1\rvert}}

\newcommand\normf[1]{\ensuremath{\lVert#1\rVert_{f}}}
\newcommand\normfRb[1]{\ensuremath{\lVert#1\rVert_{f,R_b}}}
\newcommand\normfRbone[1]{\ensuremath{\lVert#1\rVert_{f, R_{b_1}}}}
\newcommand\normfRbtwo[1]{\ensuremath{\lVert#1\rVert_{f,R_{b_2}}}}
\newcommand\normtwo[1]{\ensuremath{\lVert#1\rVert_{2}}}
\newcommand\norminfty[1]{\ensuremath{\lVert#1\rVert_{\infty}}}
\newcommand{\ds}{\displaystyle}

\title{Spreading Speeds and Linear Determinacy for Two Species Competition Systems with
Nonlocal Dispersal in Periodic Habitats}

\author{Liang Kong\\
Department of Mathematical Sciences\\
University of Illinois at Springfield\\
Springfield, IL 62703\\
\\
and\\
\\
Nar Rawal and Wenxian Shen\\
Department of Mathematics and Statistics\\
Auburn University\\
Auburn University, AL 36849\\
U.S.A.}

\date{}
\maketitle

\noindent {\bf Abstract.} The current paper is concerned with the existence of spreading speeds and linear determinacy
for two species competition systems with nonlocal dispersal in time and space periodic habitats.
The notion of spreading speed intervals for such a system is first introduced via the natural features of spreading speeds.
The existence and lower bounds of spreading speed intervals are then established. When the periodic dependence of the habitat
 is only on
the time variable, the existence of a single spreading speed is proved. It also shows that, under certain conditions,
the spreading speed interval in any direction is a singleton, and, moreover, the linear determinacy holds.

\bigskip

\noindent {\bf Key words.} Competition system, nonlocal dispersal, periodic habitat, spreading speed, linear determinacy.

\bigskip


\newpage

\section{Introduction}
\setcounter{equation}{0}

The current paper is concerned with the spatial spreading speeds of the following two species competition system with nonlocal dispersal,
\begin{equation}
\label{main-eq}
\begin{cases}
u_t=\int_{\RR^N}\kappa(y-x)u(t,y)dy-u(t,x)+u(a_1(t,x)-b_1(t,x)u-c_1(t,x)v),\quad & x\in\RR^N\cr
v_t=\int_{\RR^N}\kappa(y-x)v(t,y)dy-v(t,x)+v(a_2(t,x)-b_2(t,x)u-c_2(t,x)v),\quad  & x\in\RR^N,
\end{cases}
\end{equation}
where $u(t,x),v(t,x)$ represent the population densities of two species, $\kappa(\cdot)$ is a $C^1$
 convolution kernel supported on a ball centered at $0$ (i.e.
  $\kappa(z)>0$ if $\|z\|<r_0$ and $\kappa(z)=0$ if $\|z\|\geq r_0$ for some $r_0>0$, where
 $\|\cdot\|$ denotes the norm
 in $\RR^N$),   $\int_{\RR^N}\kappa(z)dz=1$,
 and $a_k(\cdot,\cdot)$, $b_k(\cdot,\cdot)$, $c_k(\cdot,\cdot)$ satisfy the following basic hypothesis,

 \medskip
\noindent {\bf (HB0)} {\it $a_k(t,x)$, $b_k(t,x)$, $c_k(t,x)$ $(k=1,2$) are  $C^0$ in $(t,x)\in \RR\times\RR^N$; periodic in $t$ with period $T$ and in
$x_i$ with period $p_i$, that is,
  $a_k(\cdot+T,\cdot)=a_k(\cdot,\cdot+p_i{\bf e_i})=a_k(\cdot,\cdot)$, $b_k(\cdot+T,\cdot)=b_k(\cdot,\cdot+p_i{\bf e_i})=b_k(\cdot,\cdot)$,
 $c_k(\cdot+T,\cdot)=c_k(\cdot,\cdot+p_i{\bf e_i})=c_k(\cdot,\cdot)$,
${\bf e_i}=(\delta_{i1},\delta_{i2}, \cdots,\delta_{iN})$, $\delta_{ij}=1$ if $i=j$ and $0$ if $i\not =j$,
$i,j=1,2,\cdots,N$; and $b_k(t,x)>0$, $c_k(t,x)>0$ for  $t\in\RR$ and $x\in\RR^N$.}
 \medskip

In \eqref{main-eq},  the functions $a_1$, $a_2$ represent the respective growth
rates of the two species, $b_1$, $c_2$ account for self-regulation of the respective species, and $c_1$, $b_2$
account for competition between the two species.
The periodicity of $a_k$, $b_k$, and $c_k$ reflects the time and space periodicity of the environment.

 System \eqref{main-eq} is a nonlocal dispersal counterpart of the following two species competition system with random dispersal,
\begin{equation}
\label{main-random-eq}
\begin{cases}
u_t=\Delta u+u(a_1(t,x)-b_1(t,x)u-c_1(t,x)v),\quad &x\in\RR^N\cr
v_t=\Delta v+v(a_2(t,x)-b_2(t,x)u-c_2(t,x)v),\quad &x\in\RR^N.
\end{cases}
\end{equation}

Systems \eqref{main-eq} and \eqref{main-random-eq} describe the population dynamics of two competing species
with internal interaction or dispersal.
 Classically, one assumes that the range of internal interaction of both species is infinitesimal
or the internal dispersal is random, which leads to \eqref{main-random-eq} (see \cite{CaCo},   \cite{HLM},  \cite{HMP},  \cite{Pao},
\cite{Smi}, \cite{Zha}, \cite{ZhPo}, etc.). However, in many cases  dispersal of a species is affected by long
range or nonlocal internal interaction.
We refer to \cite{BaZh}, \cite{BeCoVo1}, \cite{BeCoVo2},
\cite{ChChRo},  \cite{Cov}, \cite{Cov1}, \cite{CoDaMa}, \cite{CoDaMa1},  \cite{CoDu}, \cite{CoElRo},   \cite{Fif},
\cite{GaRo},     \cite{HMMV},
\cite{HuShVi},   \cite{KaLoSh}, \cite{LiSuWa},  \cite{ShZh1}, \cite{ShZh2}, \cite{ShZh3}, etc.,
 for the study of
evolution
models with  nonlocal  spatial
interaction. In
particular, \eqref{main-eq} arises in modeling the population dynamics of two competing species with
nonlocal spatial internal interaction (see \cite{HMMV}).

In addition to the coexistence and extinction dynamics,
spatial spreading speeds and traveling wave solutions  are among the central problems investigated for
\eqref{main-eq} and \eqref{main-random-eq}. Such problems for \eqref{main-random-eq} with space and time
independent coefficients have been widely studied.
 For example,  spreading speed/minimal wave speed and linear determinacy  for \eqref{main-random-eq} with temporally and spatially
independent coefficients
 are studied in \cite{FaZh}, \cite{GuLi}, \cite{Hos}, \cite{Hua}, \cite{HuHa},
 \cite{LeLiWe}, \cite{LiWeLe}, \cite{LiZh}, \cite{LiZh1}, \cite{Lui}, \cite{Wei1},
 \cite{Wei2}, \cite{WeLeLi}, \cite{YuZh}, etc..
  It should be pointed out that the works \cite{FaZh}, \cite{LeLiWe}, and \cite{WeLeLi}  can be applied to \eqref{main-random-eq}
  with
 spatially homogeneous and temporally periodic coefficients.  Very recently, we learned that Yu and Zhao have been studying
spatial spreading speeds and traveling wave solutions of \eqref{main-random-eq} with  coefficients that are periodic in time and space (\cite{YuZh}).

Spatial spreading speeds and traveling wave solutions for \eqref{main-eq} with  temporally and spatially independent
coefficients have also been studied in several papers. In fact,  the works  \cite{FaZh}, \cite{LeLiWe}, and \cite{WeLeLi}
can be applied to \eqref{main-eq} with  coefficients  independent of space.
However, there is little study on spatial spreading and traveling wave solutions for \eqref{main-eq} with  periodic coefficients
in both time and space.

The objective of the current paper is to study the spatial spreading speeds of \eqref{main-eq} with both
   temporally and spatially periodic coefficients. Due to the lack of compactness of the solutions, it appears to be
 difficult to adopt the construction method for spreading speeds of general
competitive or cooperative systems from \cite{FaZh}, \cite{LeLiWe}, \cite{WeLeLi},
etc. in dealing with \eqref{main-eq} with  temporally and spatially  periodic coefficients.  Therefore we will employ the natural properties of
spreading speeds to give a definition of this concept following an idea from \cite{RaShZh} and \cite{ShZh1}.
We then investigate the boundedness, lower bounds, and uniqueness  of spreading speeds.
We also study the linear determinacy for the spreading speeds.

To describe the problems studied  and the results obtained in the current paper,
let
\begin{equation}
\label{x-space}
X=\{u\in C(\RR^N,\RR)\,|\, u\,\,\,\text{is uniformly continuous and bounded}\}
\end{equation}
with the  supremum norm and
\begin{equation}
\label{x-positive-space}
X^+=\{u\in X\,|\, u(x)\ge 0\quad \forall\,\, x\in\RR^N\},\quad X^{++}=\{u\in X^+\,|\, \inf_{x\in\RR^N}u(x)>0\}.
\end{equation}
For $u,v\in X$, we define
$$
u\le v\quad (u\ll v)\quad {\rm if}\quad v-u\in X^+\quad (v-u\in X^{++}).
$$
Let
\begin{equation}
\label{x-p-space}
X_p=\{u\in C(\RR^N,\RR)\,|\, u(\cdot+p {\bf e_i})=u(\cdot)\quad  {\rm for}\,\, i=1,2,\cdots, N\}
\end{equation}
with the maximum norm
and
\begin{equation}
\label{x-p-positive-space}
X_p^+=\{u\in X_p\,|\, u(x)\ge 0\,\,\forall \,\, x\in\RR^N\},\quad X_p^{++}=\{u\in X_p^+\,|\, u(x)>0\quad \forall\,\, x\in\RR^N\}.
\end{equation}

By general semigroup theory (see \cite{Paz}), for any $u_0,v_0\in X$, \eqref{main-eq} has a unique
 (local) solution $(u(t,x;u_0,v_0)$, $v(t,x;u_0,v_0))$ with $u(0,x;u_0,v_0)=u_0(x)$
and $v(0,x;u_0,v_0)=v_0(x)$.
By comparison principle for two species competition systems
with nonlocal dispersal (see \cite[Proposition 3.1]{HeNgSh}), if $u_0,v_0\in X^+$, then $(u(t,\cdot;u_0,v_0),v(t,\cdot;u_0,v_0))$ exists  and
$(u(t,\cdot;u_0,v_0),v(t,\cdot;u_0,v_0))\in X^+\times X^+$ for all $t\ge 0$.
We remark that, if $(u_0,v_0)\in X_p\times X_p$, then $(u(t,\cdot;u_0,v_0)$, $v(t,\cdot;u_0,v_0))\in X_p\times X_p$
for $t$ in the existence interval of $(u(t,\cdot;u_0,v_0), v(t,\cdot;u_0,v_0))$.

Observe that \eqref{main-eq} contains the following two sub-systems,
\begin{equation}
\label{main-sub-eq1}
u_t=\int_{\RR^N}\kappa(y-x)u(t,y)dy-u(t,x)+u(a_1(t,x)-b_1(t,x)u),\quad x\in\RR^N,
\end{equation}
and
\begin{equation}
\label{main-sub-eq2}
v_t=\int_{\RR^N}\kappa(y-x)v(t,y)dy-v(t,x)+v(a_2(t,x)-c_2(t,x)v),\quad x\in\RR^N.
\end{equation}
The asymptotic dynamics of \eqref{main-sub-eq1} (resp. \eqref{main-sub-eq2}) is determined by the stability of its trivial
solution $u\equiv 0$ (resp. $v\equiv 0$). More precisely,
let
\begin{equation}
\label{x-pp-space} \mathcal{X}_p=\{u\in C(\RR\times \RR^N,\RR)|u(\cdot+T,\cdot)=u(\cdot,\cdot+p_i{\bf e_i})=u(\cdot,\cdot),\quad i=1,\cdots,N\}
\end{equation}
with the norm $\|u\|_{\mathcal{X}_p}=\sup_{(t,x)\in\RR\times \RR^N}|u(t,x)|$, and
\begin{equation}
\label{x-pp-positive-space} \mathcal{X}_p^+=\{u\in \mathcal{X}_p\,|\, u(t,x)\geq 0\quad \forall \,\, (t,x)\in\RR\times \RR^N\}.
\end{equation}
Let  $I$ be the identity map on $\mathcal{X}_p$, and $\mathcal{K}$, $a(\cdot,\cdot)I:\mathcal{X}_p\to \mathcal{X}_p$ be defined by
\begin{equation}
\label{k-delta-op} \big(\mathcal{K} u\big)(t,x)=\int_{\RR^N}\kappa(y-x)u(t,y)dy,
\end{equation}
\begin{equation}
\label{a-op} (a(\cdot,\cdot)Iu)(t,x)=a(t,x)u(t,x),
\end{equation}
where $\kappa(\cdot)$ is as in \eqref{main-eq} and  $a(\cdot,\cdot)\in\mathcal{X}_p$.
Let $\sigma(-\p_t +\mathcal{K}-I+a(\cdot,\cdot)I)$ be the spectrum of $-\p_t+\mathcal{K}-I+a(\cdot,\cdot)I$ acting on $\mathcal{X}_p$
and
$$\lambda_0(a):=\sup\{{\rm Re}\lambda\,|\, \lambda\in\sigma(-\p_t+\mathcal{K}-I+a(\cdot,\cdot)I)\}.
$$
We call $\lambda_0(a)$ the {\it principal spectrum point} of $-\p_t+\mathcal{K}-I+a(\cdot,\cdot)I$ acting on $\mathcal{X}_p$
(see Definition \ref{principal-spectrum-point-def}).

 Throughout the paper, we  assume that

 \medskip
\noindent{\bf (HB1)} {\it $u\equiv 0$ is a linearly unstable solution of \eqref{main-sub-eq1} in $X_p$ and
$v\equiv 0$ is a linearly unstable solution of \eqref{main-sub-eq2} in $X_p$, that is,  $\lambda_0(a_k)>0$ $(k=1,2)$.}

 \medskip

Note that $(0,0)$ is a trivial solution of \eqref{main-eq} in $X_p^+\times X_p^+$
  and that   (HB0) and (HB1) imply  that  \eqref{main-eq} has  two  semi-trivial time periodic
 solutions in $X_p^+\times X_p^+$,  $(u^*(t,\cdot),0)\in (X_p^+\setminus\{0\})\times X_p^+$ and $(0,v^*(t,\cdot))\in X_p^+\times
 (X_p^+\setminus\{0\})$
 (see Proposition \ref{semitrivial-solution-prop}).

 We also assume throughout this paper that

 \medskip

 \noindent {\bf (HB2)}
 {\it $(0,v^*)$ is linearly unstable in $X_p^+\times X_p^+$  (i.e. $\lambda_0(a_1-c_1 v^*)>0$) and $(u^*,0)$ is
 linearly and globally stable in $X_p^+\times X_p^+$
 (i.e. $\lambda_0(a_2-b_2 u^*)<0$ and for any $(u_0,v_0)\in X_p^+\times X_p^+$ with $u_0\not =0$, $u(t,x;u_0,v_0)-u^*(t,x)\to 0$ and
 $v(t,x;u_0,v_0)\to 0$ as $t\to\infty$ uniformly in $x\in\RR^N$).}

 \medskip

 The assumption (HB2) implies that the species $u$ can invade the species $v$ and the species $v$ cannot invade the species $u$.
  We remark that the following assumption (HB2$)^{'}$ on the coefficients $a_i$, $b_i$, and $c_i$ ($i=1,2$) implies (HB2)
 (see Proposition \ref{semitrivial-solution-prop} for the reasoning).

\medskip
\noindent {\bf (HB2$)^{'}$} {\it $\lambda_0(a_k)>0$ for $k=1,2$ and $a_{1L}> \frac{c_{1M}a_{2M}}{c_{2L}}$,
$a_{2M}\le \frac{a_{1L}b_{2L}}{b_{1M}}$,  where
$a_{kL}=\inf_{t\in\RR,x\in\RR^N}a_k(t,x)$, $a_{kM}=\sup_{t\in\RR,x\in\RR^N}a_k(t,x)$, and $b_{kL}$, $b_{kM}$, $c_{kL}$,
$c_{kM}$ $(k=1,2)$ are defined similarly.}

\medskip

Under the assumptions (HB0)-(HB2), spatial spreading speeds  or invading speeds from $(u^*,0)$ to $(0,v^*)$   and traveling wave solutions connecting
 $(u^*,0)$ and $(0,v^*)$ are  among most interesting dynamical problems for
\eqref{main-eq}.   The objective of this paper is to
 study the spatial spreading speeds  of \eqref{main-eq} from $(u^*,0)$ to $(0,v^*)$. In order to do so,
   we first transform \eqref{main-eq} to a cooperative system via the following standard change of variables,
\begin{equation}
\label{change-variable-eq}
\tilde u(t,x)=u(t,x),\quad \tilde v(t,x)=v^*(t,x)-v(t,x).
\end{equation}
Dropping the tilde, \eqref{main-eq} is transformed into
\begin{equation}
\label{main-eq1}
\begin{cases}
u_t=\mathcal{K}u-u+u\Big(a_1(t,x)-b_1(t,x)u-c_1(t,x)(v^*(t,x)-v)\Big)\cr
v_t=\mathcal{K}v-v+b_2(t,x)\Big(v^*(t,x)-v\Big)u+v\Big(a_2(t,x)-2c_2(t,x)v^*(t,x)+c_2(t,x)v\Big),
\end{cases}
\end{equation}
where $x\in\RR^N$, $\mathcal{K}u=\int_{\RR^N}\kappa(y-x)u(t,y)dy$ and $\mathcal{K}v=\int_{\RR^N}\kappa(y-x)v(t,y)dy$.
Observe  that the trivial solution $E_0:=(0,0)$ of \eqref{main-eq} becomes $\tilde E_0=(0,v^*)$,
the semitrivial solution $E_1:=(0,v^*)$ of \eqref{main-eq} becomes $\tilde E_1=(0,0)$, and the semitrivial
solution $E_2:=(u^*,0)$ of \eqref{main-eq} becomes $\tilde E_2=(u^*,v^*)$.
To study the spreading speeds of \eqref{main-eq} from $E_2$ to $E_1$  is then equivalent to study the spreading speeds
of \eqref{main-eq1} from $\tilde E_2$ to $\tilde E_1$. Observe also that \eqref{main-eq1} is cooperative in the region $u\ge 0$ and
$0\le v\le v^*$.

Throughout this paper, we assume (HB0)-(HB2).
We denote
$(u(t,x;u_0,v_0)$, $v(t,x;u_0,v_0))$ as the solution of \eqref{main-eq1} with $(u(0,\cdot;u_0,v_0)$, $v(0,\cdot;u_0,v_0))=(u_0,v_0)\in X\times X$.
Note that if $(u_0,v_0)\in X_p\times X_p$, then $(u(t,\cdot;u_0,v_0),v(t,\cdot;u_0,v_0))\in X_p\times X_p$ for $t$ in the
existence interval of $(u(t,\cdot;u_0,v_0),v(t,\cdot;u_0,v_0))$. Moreover, by (HB2), for any  $(u_0,v_0)\in X_p^+\times X_p^+$ with
$u_0\not = 0$ and $v_0\le v^*(0,\cdot)$,
$$
(u(t,x;u_0,v_0),v(t,x;u_0,v_0))-(u^*(t,x),v^*(t,x))\to 0
$$
as $t\to\infty$ uniformly in $x\in\RR^N$.

Let
$$
S^{N-1}=\{\xi\in\RR^N\,|\, \|\xi\|=1\}.
$$
For given $\xi\in S^{N-1}$, let
$$
X_1^+(\xi)=\{u\in X^+\,|\,  u(\cdot)\ll u^*(0,\cdot),\,\, u(x)=0\,\, {\rm for}\,\, x\cdot\xi\gg 1,\,\,
\liminf_{x\cdot\xi\to -\infty}u(x)>0\}
$$
and
$$
X_2^+(\xi)=\{v\in X^+\,|\,  v(\cdot)\ll v^*(0,\cdot),\,\,  v(x)=0\,\, {\rm for}\,\, x\cdot\xi\gg 1,\,\,
\liminf_{x\cdot\xi\to -\infty}v(x)>0\}.
$$

\begin{definition}
\label{spreading-speed-cooperative}
Let
\begin{align*}
C_{\sup}(\xi)=\Big\{c\in\RR\,|\, &\limsup_{x\cdot\xi\ge ct,t\to\infty} u^2(t,x;u_0,v_0)+v^2(t,x;u_0,v_0)=0,\\
&\,\,\,\forall\,\,
(u_0,v_0)\in X_1^+(\xi)\times X_2^+(\xi)\Big\}
\end{align*}
and
\begin{align*}
C_{\inf}(\xi)=\Big\{c\in\RR\,|\, &\limsup_{x\cdot\xi\le ct,t\to\infty}\Big[|u(t,x;u_0,v_0)-u^*(t,x)|+|v(t,x;u_0,v_0)-v^*(t,x)|\Big]=0,\\
&\,\,\forall\,\,
(u_0,v_0)\in X_1^+(\xi)\times X_2^+(\xi)\Big\}.
\end{align*}
Let
$$
c_{\sup}^*(\xi)=\begin{cases} \inf\{c\,|\, c\in C_{\sup}(\xi)\}\quad &{\rm if}\quad C_{\sup}(\xi)\not=\emptyset\cr
\infty\quad &{\rm if}\quad C_{\sup}(\xi)=\emptyset
\end{cases}
$$
and
$$
c_{\inf}^*(\xi)=\begin{cases} \sup\{c\,|\, c\in C_{\inf}(\xi)\}\quad &{\rm if}\quad C_{\inf}(\xi)\not=\emptyset\cr
-\infty\quad &{\rm if}\quad C_{\inf}(\xi)=\emptyset.
\end{cases}
$$
$[c_{\inf}^*(\xi),c_{\sup}^*(\xi)]$ is called the {\rm spreading speed interval} of \eqref{main-eq1} or \eqref{main-eq}
in the direction of $\xi$.
\end{definition}

\begin{remark}
\label{spreading-speed-rk1}
\begin{itemize}
\item[(1)] If $c\in C_{\sup}(\xi)$, then $[c,\infty)\subset C_{\sup}(\xi)$ and $C_{\inf}(\xi)\subset (-\infty,c)$.

\item[(2)] If $c\in C_{\inf}(\xi)$, then $(-\infty,c]\subset C_{\inf}(\xi)$ and $C_{\sup}(\xi)\subset (c,\infty)$.

\item[(3)] $c_{\inf}^*(\xi)\le c_{\sup}^*(\xi)$ and for any $c\in (c_{\inf}^*(\xi),c_{\sup}^*(\xi))$, there is
$(u_0,v_0)\in X_1^+(\xi)\times X_2^+(\xi)$ such that
$$
 \limsup_{x\cdot\xi\ge ct,t\to\infty} u^2(t,x;u_0,v_0)+v^2(t,x;u_0,v_0)>0
 $$
 and
 $$
\limsup_{x\cdot\xi\le ct,t\to\infty}\Big[|u(t,x;u_0,v_0)-u^*(t,x)|+|v(t,x;u_0,v_0)-v^*(t,x)|\Big]>0.
$$
\end{itemize}
\end{remark}

Let $\lambda_\xi(\mu)$ be the principal
spectrum point of the eigenvalue problem
\begin{equation}
\label{eigenvalue-eq0}
\begin{cases}
-u_t+\int_{\RR^N}e^{-\mu(y-x)\cdot\xi}\kappa(y-x)u(t,y)dy-u(t,x)+(a_1(t,x)-c_1(t,x)v^*(t,x))u(t,x)=\lambda u(t,x)\cr
u(\cdot,\cdot)\in\mathcal{X}_p
\end{cases}
\end{equation}
(see Definition \ref{principal-spectrum-point-def}  for detail).

The first two main theorems of this paper are then stated as follows.

\begin{theorem}[Finiteness and lower bound]
\label{spreading-speed-thm1} Assume (HB0)-(HB2).  For any $\xi\in S^{N-1}$,
 $[c_{\inf}^*(\xi),c_{\sup}^*(\xi)]$ is a finite interval.
Moreover,
 \begin{equation}
 \label{lower-bound-eq}
 c^*_{\inf}(\xi)\ge \inf_{\mu>0}\frac{\lambda_\xi(\mu)}{\mu}.
 \end{equation}
\end{theorem}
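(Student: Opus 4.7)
The plan is to handle three tasks separately: (i) $c^*_{\sup}(\xi)<\infty$, (ii) $c^*_{\inf}(\xi)>-\infty$, and (iii) the inequality $c^*_{\inf}(\xi)\ge c^*:=\inf_{\mu>0}\lambda_\xi(\mu)/\mu$. Throughout I work with the cooperative reformulation \eqref{main-eq1} on the invariant region $\{0\le u\le u^*,\,0\le v\le v^*\}$, use the comparison principle \cite[Proposition 3.1]{HeNgSh}, and exploit the positive $(t,x)$-periodic principal eigenfunction $\phi_{\xi,\mu}$ associated with $\lambda_\xi(\mu)$ in \eqref{eigenvalue-eq0}.

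For (i), fix $\mu>0$. Since the $v$-component of the linearization of \eqref{main-eq1} at $\tilde E_1=(0,0)$ has coefficient $a_2-2c_2v^*$ with $\lambda_0(a_2-2c_2v^*)<0$ (because $\lambda_0(a_2-c_2v^*)=0$ with eigenfunction $v^*$), the corresponding forced problem driven by $\phi_{\xi,\mu}$ through the cooperative coupling $b_2v^*u$ admits a positive periodic solution $\psi_{\xi,\mu}(t,x)\in\mathcal{X}_p^+$. For any $c>\lambda_\xi(\mu)/\mu$ and any $(u_0,v_0)\in X_1^+(\xi)\times X_2^+(\xi)$, the truncated ansatz
\[
(\bar u,\bar v)(t,x)=\bigl(\min\{u^*(t,x),A\phi_{\xi,\mu}(t,x)e^{-\mu(x\cdot\xi-ct)}\},\;\min\{v^*(t,x),B\psi_{\xi,\mu}(t,x)e^{-\mu(x\cdot\xi-ct)}\}\bigr),
\]
with $A,B$ large enough to dominate $(u_0,v_0)$, is a super-solution of \eqref{main-eq1} on the invariant region. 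Comparison then gives $(u,v)(t,x;u_0,v_0)\le(\bar u,\bar v)(t,x)$; since $\bar u,\bar v\to 0$ as $x\cdot\xi-ct\to+\infty$, we obtain $c\in C_{\sup}(\xi)$, hence (i). For (ii), any $(u_0,v_0)\in X_1^+(\xi)\times X_2^+(\xi)$ satisfies $u_0,v_0\ge\delta$ on $\{x\cdot\xi\le-R\}$ for some $\delta,R>0$. Comparing with a solution starting from periodic data that equals $(\delta,\delta)$ on a large cube, and then using the global stability of $(u^*,v^*)$ from (HB2), yields $(u,v)(t,x;u_0,v_0)\to(u^*(t,x),v^*(t,x))$ uniformly on $\{x\cdot\xi\le-R'\}$ for $R'$ large. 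For $c$ sufficiently negative this set contains $\{x\cdot\xi\le ct\}$ for all large $t$, giving $c\in C_{\inf}(\xi)$ and (ii).

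For (iii), fix $c<c^*$, choose $\mu_0>0$ with $\lambda_\xi(\mu_0)>c\mu_0$, and $\mu_1>\mu_0$ close enough that also $\lambda_\xi(\mu_1)>c\mu_1$. Form the Aronson--Weinberger style sub-solution
\[
\underline u(t,x)=\alpha\,\phi_{\xi,\mu_0}(t,x)\,e^{-\mu_0(x\cdot\xi-ct)}-\beta\,\phi_{\xi,\mu_1}(t,x)\,e^{-\mu_1(x\cdot\xi-ct)},
\]
which is positive on a band of width $O(1)$ trailing the front $x\cdot\xi=ct$, with $\alpha$ small enough to absorb the quadratic error $-b_1u^2$. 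Using cooperativity and the pointwise bound $u_t\ge\mathcal{K}u-u+(a_1-c_1v^*)u-b_1u^2$ (valid for $v\ge 0$), $\underline u_+:=\max\{\underline u,0\}$ is a sub-solution of the $u$-component, so comparison gives $u(t,x;u_0,v_0)\ge\underline u_+(t,x)$ for large $t$. Thus $u$ is bounded below by a positive constant along a moving band. A standard iteration---translate the band into the interior of $\{x\cdot\xi\le ct\}$ and apply global stability of $(u^*,v^*)$ locally using cooperativity---propagates this lower bound to the full half-space and upgrades it to the uniform convergence $(u,v)\to(u^*,v^*)$, so $c\in C_{\inf}(\xi)$ and (iii) follows. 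The main obstacle is precisely this last part: the sub-solution must simultaneously be positive on a useful set and satisfy the differential inequality against the nonlinear corrections, and the resulting narrow-band lower bound must be bootstrapped to half-space convergence. The non-compactness flagged in the introduction surfaces here---without a semigroup compactness property, the Weinberger framework is unavailable and one must work with explicit periodic eigenfunctions of \eqref{eigenvalue-eq0} together with (HB2) to close the iteration.
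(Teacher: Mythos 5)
Your proposal has two genuine gaps, one in each half of the argument. For the upper bound (finiteness of $c_{\sup}^*(\xi)$), the linearization-based super-solution does not work under (HB0)--(HB2) alone. In the cooperative form \eqref{main-eq1} the $v$-nonlinearity exceeds its linearization at $(0,0)$ by $g-g_{\rm lin}=c_2v^2-b_2uv=v(c_2v-b_2u)$, which is \emph{positive} wherever $c_2\bar v>b_2\bar u$ --- e.g.\ near the interface where $\bar v$ is of order $v^*$ while $A\phi_{\xi,\mu}e^{-\mu(x\cdot\xi-ct)}$ is small --- so a super-solution of the linearized system need not be a super-solution of \eqref{main-eq1}. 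Arranging $c_2\bar v\le b_2\bar u$ pointwise is precisely the role of (HL0) together with Lemma \ref{linear-determinacy-lm2} (which needs (HL1) or (HL2)) in Section 4; if your claim held for every $c>\lambda_\xi(\mu)/\mu$ under (HB0)--(HB2) alone, you would have proved linear determinacy of $c^*_{\sup}$ with no extra hypotheses, which the paper does not assert (it even leaves the single-speed question open in the general periodic case). In addition, you use ``the'' positive periodic principal eigenfunction $\phi_{\xi,\mu}$ of \eqref{eigenvalue-eq0}, but for nonlocal dispersal $\lambda_\xi(\mu)$ need not be a principal eigenvalue at all (the paper warns of this explicitly, citing \cite{ShZh1}); one must either approximate via Proposition \ref{PE-perturbation-prop} or avoid eigenfunctions altogether. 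The paper does the latter: Lemma \ref{spreading-lm5} shows $(u^*(t,x)(1-\eta(x\cdot\xi-Ct)),\,v^*(t,x)(1-\eta(x\cdot\xi-Ct)))$ is a super-solution for all large $C$, which yields $c^*_{\sup}(\xi)\le C_0$ with no eigenfunction and no (HL) conditions.

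For the lower bound, the scalar two-exponential sub-solution again presupposes eigenfunctions that may not exist (the paper sidesteps this by comparing $u$ with the solution of the auxiliary KPP equation \eqref{aux-main-eq} and quoting the known scalar spreading result, Proposition \ref{spreading-single-prop}, which gives $\liminf_{x\cdot\xi\le ct,\,t\to\infty}u>0$ for every $c<\inf_{\mu>0}\lambda_\xi(\mu)/\mu$). More seriously, the step you dismiss as ``a standard iteration'' is the actual crux: (HB2) provides stability of $(u^*,v^*)$ only against spatially \emph{periodic} perturbations, so ``apply global stability locally'' is not an available move in this non-compact setting. Converting a positive lower bound on $u$ behind the front into uniform convergence of $(u,v)$ to $(u^*,v^*)$ on $\{x\cdot\xi\le c't\}$ is exactly Lemma \ref{spreading-lm4}, whose proof needs the translation-uniform continuity estimate of Lemma \ref{spreading-lm2}, the positive-infimum stability of Lemma \ref{spreading-lm3}, and a two-stage iteration over time periods with the truncated data $\tilde u_n$ and $\bar u_n$; none of this is supplied in your sketch. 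Finally, your separate argument (ii) is both redundant (the lower bound already gives $c^*_{\inf}(\xi)\ge\inf_{\mu>0}\lambda_\xi(\mu)/\mu>-\infty$) and invalid as written: periodic data equal to $(\delta,\delta)$ on a large cube is not below $(u_0,v_0)\in X_1^+(\xi)\times X_2^+(\xi)$, which vanishes for $x\cdot\xi\gg1$, so the comparison you invoke cannot be applied.
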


\begin{theorem}[Single spreading speed]
\label{spreading-speed-thm2}
Assume (HB0)-(HB2).
 If $a_k(t,x)\equiv a_k(t)$, $b_k(t,x)\equiv b_k(t)$, and
$c_k(t,x)\equiv c_k(t)$ $(k=1,2)$, then $c_{\inf}^*(\xi)=c_{\sup}^*(\xi)$ for every $\xi\in S^{N-1}$.
\end{theorem}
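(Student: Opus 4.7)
The plan is to combine the lower bound $c_{\inf}^*(\xi)\ge c^*$, where $c^*:=\inf_{\mu>0}\lambda_\xi(\mu)/\mu$, already supplied by Theorem \ref{spreading-speed-thm1}, with a matching upper bound $c_{\sup}^*(\xi)\le c^*$. Thus the entire content of Theorem \ref{spreading-speed-thm2} is this upper bound. The crucial simplification in the spatially homogeneous case is full spatial translation invariance: $u^*$ and $v^*$ depend only on $t$, the cooperative system \eqref{main-eq1} is invariant under every shift $x\mapsto x+z$ with $z\in\RR^N$, and the principal eigenfunction associated with $\lambda_\xi(\mu)$ in \eqref{eigenvalue-eq0} is a strictly positive $T$-periodic function $\phi(t)$ of $t$ alone.

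Fix any $c>c^*$ and choose $\mu>0$ with $\lambda_\xi(\mu)<c\mu$. A direct computation shows that $A\phi(t)e^{-\mu(x\cdot\xi-ct)}$ satisfies the linearization of the first equation of \eqref{main-eq1} at $(0,0)$ with a strictly positive slack $c\mu-\lambda_\xi(\mu)$. For the second equation the linearization at $(0,0)$ is upper triangular, with diagonal Floquet exponent $\lambda_0(a_2-2c_2v^*)$, which is strictly negative since $\int_0^T(a_2-c_2v^*)\,dt=0$ while $\int_0^T a_2\,dt>0$. Consequently one can choose a positive $T$-periodic $\psi(t)$ so that $B\psi(t)e^{-\mu(x\cdot\xi-ct)}$ is a linear supersolution of the (inhomogeneous) $v$-component linearization forced by $A\phi(t)e^{-\mu(x\cdot\xi-ct)}$, provided $B$ is large relative to $A$. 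Define
\begin{equation*}
\bar u(t,x):=\min\{u^*(t),\,A\phi(t)e^{-\mu(x\cdot\xi-ct)}\},\qquad
\bar v(t,x):=\min\{v^*(t),\,B\psi(t)e^{-\mu(x\cdot\xi-ct)}\}.
\end{equation*}

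Since $(u^*,v^*)$ is itself a solution of \eqref{main-eq1}, since the exponentials dominate the nonlinear right-hand sides in the invariant rectangle $\{0\le u\le u^*,\,0\le v\le v^*\}$ (the KPP-type sub-homogeneity of the cooperative nonlinearity), and since $\mathcal K$ is a positive operator, a standard min-of-supersolutions argument shows that $(\bar u,\bar v)$ is a supersolution of \eqref{main-eq1}. For any $(u_0,v_0)\in X_1^+(\xi)\times X_2^+(\xi)$ we have $u_0\le u^*(0)$, $v_0\le v^*(0)$, and both vanish for $x\cdot\xi$ large, so $A$ and $B$ can be taken large enough that $(u_0,v_0)\le(\bar u(0,\cdot),\bar v(0,\cdot))$. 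The comparison principle \cite[Proposition 3.1]{HeNgSh} then gives $u(t,\cdot;u_0,v_0)\le\bar u(t,\cdot)$ and $v(t,\cdot;u_0,v_0)\le\bar v(t,\cdot)$ for all $t\ge 0$, so both components decay like $e^{-\mu(x\cdot\xi-ct)}\to 0$ in the moving frame $x\cdot\xi\ge ct$. Hence $c\in C_{\sup}(\xi)$, and letting $c\downarrow c^*$ yields $c_{\sup}^*(\xi)\le c^*\le c_{\inf}^*(\xi)$, forcing equality.

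The main technical obstacle is verifying that the truncated exponential ansatz is genuinely a supersolution of the nonlocal cooperative system. Unlike for local diffusion, the operator $\mathcal K$ does not automatically preserve pointwise minima, so one must check the supersolution inequality separately in the two regions where $\bar u$ (respectively $\bar v$) equals the exponential and where it equals the constant branch, then glue the estimates using the positivity and monotonicity of $\mathcal K$. Arranging the auxiliary function $\psi$ and the constants $A$, $B$ so that the cross-coupling $b_2(t)v^*(t)u$ does not spoil the $v$-supersolution inequality is the other delicate point, and it is here that the strict negativity of $\lambda_0(a_2-2c_2v^*)$ plays the decisive role.
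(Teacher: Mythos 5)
There is a genuine gap, and it is fatal: your key claim that ``the exponentials dominate the nonlinear right-hand sides in the invariant rectangle'' (KPP-type sub-homogeneity) is false for the cooperative system \eqref{main-eq1} under (HB0)--(HB2) alone. Writing the nonlinearities against the linearization at $(0,0)$, the corrections are $f(t,x,u,v)-(a_1-c_1v^*)u=u\,(c_1v-b_1u)$ and $g(t,x,u,v)-\big[b_2v^*u+(a_2-2c_2v^*)v\big]=v\,(c_2v-b_2u)$, and neither has a sign in the rectangle $0\le u\le u^*$, $0\le v\le v^*$. When you try to make the truncated pair $(\bar u,\bar v)$ a supersolution, the $u$-branch forces $c_1B\psi\le b_1A\phi$ and the $v$-branch forces $c_2B\psi\le b_2A\phi$, while the forcing term $b_2v^*\bar u$ simultaneously forces $B\psi$ to lie above the periodic response to $A\phi$ (and on the truncated branch one needs $b_2u^*\ge c_2v^*$). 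These are exactly the hypotheses (HL0) and the conclusions of Lemma \ref{linear-determinacy-lm2}, which the paper derives only under (HL1) or (HL2); they are not consequences of (HB0)--(HB2), and the ``$B$ large relative to $A$'' you need for the $v$-equation pulls directly against the inequalities you need for the $u$-equation. Indeed, if your argument were correct it would prove $c_{\sup}^*(\xi)\le\inf_{\mu>0}\lambda_\xi(\mu)/\mu$, i.e.\ Theorem \ref{linear-determinacy-thm} with no (HL)-type assumptions; this is known to be false even for constant coefficients, where the spreading/minimal wave speed of Lotka--Volterra competition can strictly exceed the linearized value (nonlinear speed selection, cf.\ \cite{Hua}, \cite{HuHa}). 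So the statement you set out to prove as an intermediate step is stronger than the theorem and is not true in the stated generality.

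The paper's proof takes a different route that avoids identifying the common speed with the linear value. Using spatial homogeneity it reduces to a one-dimensional problem and regards $c_{\inf}^*(\xi)$, $c_{\sup}^*(\xi)$ as the two spreading speeds of the Poincar\'e map $Q$ in the Fang--Zhao framework (Remark \ref{spreading-speed-rk2}). If $c_{\inf}^*(\xi)<c_{\sup}^*(\xi)$, then \cite[Theorem 3.1]{FaZh} yields, for any intermediate speed $c$, monotone profiles $(\Phi,\Psi)$ with $Q^n(\Phi,\Psi)(x_1)=(\Phi(x_1-cnT),\Psi(x_1-cnT))$ connecting $(u^*(0),v^*(0))$ at $-\infty$ to $(0,v^*(0))$ at $+\infty$; monotonicity forces $\Psi\equiv v^*(0)$, so $\Phi$ produces a translating solution of the scalar equation \eqref{single-speed-eq1} at a speed $c<c_0^*(\xi,a_1)$, contradicting the scalar spreading-speed result (Proposition \ref{spreading-single-prop}). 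If you want to salvage your approach, it proves the upper bound only under the additional hypotheses (HL0) and (HL1) or (HL2) --- which is precisely Theorem \ref{linear-determinacy-thm}, not Theorem \ref{spreading-speed-thm2}.
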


Observe that, by Theorem \ref{spreading-speed-thm1},  $\inf_{\mu>0}\frac{\lambda_\xi(\mu)}{\mu}$
is a lower bound of the spreading speed interval $[c_{\inf}^*(\xi),c_{\sup}^*(\xi)]$ of \eqref{main-eq1}.
It is of great interest to explore conditions such that
$c_{\inf}^*(\xi)=c_{\sup}^*(\xi)=\inf_{\mu>0}\frac{\lambda_\xi(\mu)}{\mu}$.
To this end,
we introduce the following standing assumptions.

\medskip

\noindent{\bf (HL0)} {\it $b_2(t,x)u^*(t,x)\ge c_2(t,x)v^*(t,x)$ for all $t\in\RR$ and $x\in\RR^N$.}

\medskip

\noindent {\bf (HL1)} {\it $a_1(t,x)-c_1(t,x)v^*(t,x)-a_2(t,x)+2c_2 (t,x)v^*(t,x)-b_2(t,x)v^*(t,x)\ge 0$, $b_1(t,x)\ge c_1(t,x)$, and
$b_2(t,x)\ge c_2(t,x)$ for all $t\in\RR$ and $x\in\RR^N$.}

\medskip

\noindent {\bf (HL2)} {\it $a_1(t,x)-c_1(t,x)v^*(t,x)-a_2(t,x)+2c_2 (t,x)v^*(t,x)-b_2(t,x)v^*(t,x)\frac{c_{1M}}{b_{1L}}\ge 0$
and $a_1(t,x)-c_1(t,x)v^*(t,x)-a_2(t,x)+2c_2 (t,x)v^*(t,x)-b_2(t,x)v^*(t,x)\frac{c_{2M}}{b_{2L}}\ge 0$ for all $t\in\RR$ and
$x\in\RR^N$. }

\medskip

 We remark that the following assumptions (HL0$)^{'}$, (HL1$)^{'}$, and (HL2$){'}$ on the coefficients
 $a_k$, $b_k$, $c_k$ ($k=1,2$) imply (HL0), (HL1), and (HL2), respectively.

 \medskip

 \noindent {\bf (HL0$)^{'}$} {\it $b_2(t,x)\cdot\frac{a_{1L}}{b_{1M}}\ge c_2(t,x)\cdot\frac{a_{2M}}{c_{2L}}$} for all $t\in\RR$ and
 $x\in\RR^N$.

 \medskip

 \noindent {\bf (HL1$)^{'}$} {\it $a_1(t,x)-c_1(t,x)\cdot\frac{a_{2M}}{c_{2L}}-a_2(t,x)+2c_2 (t,x)\frac{a_{2L}}{c_{2M}}-b_2(t,x)
 \cdot \frac{a_{2M}}{c_{2L}}\ge 0$, $b_1(t,x)\ge c_1(t,x)$, and
$b_2(t,x)\ge c_2(t,x)$ for all $t\in\RR$ and $x\in\RR^N$.}

\medskip

\noindent {\bf (HL2$)^{'}$} {\it $a_1(t,x)-c_1(t,x)\frac{a_{2M}}{c_{2L}}-a_2(t,x)+2c_2 (t,x)\frac{a_{2L}}{c_{2M}}
-b_2(t,x)\frac{a_{2M}}{c_{2L}}\frac{c_{1M}}{b_{1L}}\ge 0$
and $a_1(t,x)-c_1(t,x)\frac{a_{2M}}{c_{2L}}-a_2(t,x)+2c_2 (t,x)\frac{a_{2L}}{c_{2M}}-b_2(t,x)\frac{a_{2M}}{c_{2L}}\frac{c_{2M}}{b_{2L}}\ge 0$ for all $t\in\RR$ and
$x\in\RR^N$.
}
\medskip

The third main theorem of this paper is  stated as follows.

\begin{theorem}
\label{linear-determinacy-thm} Assume (HB0)-(HB2), (HL0),  and (HL1) or (HL2).
For any $\xi\in S^{N-1}$,
$c_{\inf}^*(\xi)=c_{\sup}^*(\xi)=\bar c^*_{\inf}(\xi)$.
\end{theorem}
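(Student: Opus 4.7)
By Theorem \ref{spreading-speed-thm1} together with Remark \ref{spreading-speed-rk1}(3), we already have $\bar c^*_{\inf}(\xi):=\inf_{\mu>0}\lambda_\xi(\mu)/\mu \le c^*_{\inf}(\xi)\le c^*_{\sup}(\xi)$, so the task reduces to the matching upper bound $c^*_{\sup}(\xi)\le \bar c^*_{\inf}(\xi)$. My plan is to show that for every $c>\bar c^*_{\inf}(\xi)$ and every $(u_0,v_0)\in X_1^+(\xi)\times X_2^+(\xi)$, the solution of the cooperative reformulation \eqref{main-eq1} tends to $(0,0)$ uniformly on $\{x\cdot\xi\ge ct\}$ as $t\to\infty$. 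The tool will be a truncated, exponentially decaying vector super-solution, compared against the actual solution by the comparison principle valid on the slab $\{0\le u\le u^*,\,0\le v\le v^*\}$ on which \eqref{main-eq1} is cooperative (invariance of this slab under the flow being secured by (HB2) with the help of (HL0)).

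Fix $c>\bar c^*_{\inf}(\xi)$, choose $\mu>0$ with $\mu c>\lambda_\xi(\mu)$, and let $\phi\in\mathcal{X}_p$ be a positive principal eigenfunction for \eqref{eigenvalue-eq0}. The ansatz I would use is
\[
 \bar u(t,x)=\min\{u^*(t,x),\,A\phi(t,x)e^{-\mu(x\cdot\xi-ct)}\},\qquad \bar v(t,x)=\min\{v^*(t,x),\,B\phi(t,x)e^{-\mu(x\cdot\xi-ct)}\},
\]
with positive constants $A,B$ to be tuned. Because $(u^*,v^*)$ is itself an exact solution of \eqref{main-eq1}, only the exponential branch needs to be verified as a super-solution where it is active. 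Substituting it into \eqref{main-eq1}, dividing by $\phi\,e^{-\mu(x\cdot\xi-ct)}$, and using \eqref{eigenvalue-eq0} to eliminate the linear $u$-terms, the super-solution inequalities collapse to the pointwise conditions $b_1 A\ge c_1 B$ and $b_2 A\ge c_2 B$ (which kill the two quadratic corrections $A(c_1 B-b_1 A)\phi e^{-\mu(x\cdot\xi-ct)}$ and $B(c_2 B-b_2 A)\phi e^{-\mu(x\cdot\xi-ct)}$), together with the scalar linear inequality
\[
 B\bigl(\mu c-\lambda_\xi(\mu)+a_1-c_1 v^*-a_2+2c_2 v^*\bigr)\ \ge\ A\,b_2 v^*.
\]

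Under (HL1), the estimates $b_1\ge c_1$, $b_2\ge c_2$ and $a_1-c_1 v^*-a_2+2c_2 v^*\ge b_2 v^*$ close all three requirements simultaneously with $A=B$. Under (HL2), I would instead set $A/B=\max\{c_{1M}/b_{1L},c_{2M}/b_{2L}\}$: the quadratic constraints $b_i A\ge c_i B$ then hold in the worst pointwise case, and the two explicit inequalities in (HL2) deliver precisely $a_1-c_1 v^*-a_2+2c_2 v^*\ge b_2 v^*(A/B)$, which (together with the positive buffer $\mu c-\lambda_\xi(\mu)$) closes the displayed $v$-inequality. Choosing $A$ (hence $B$) large enough then arranges $(\bar u(0,\cdot),\bar v(0,\cdot))\ge (u_0,v_0)$: the exponential branch dominates on the bounded support of $(u_0,v_0)$, the data vanish for $x\cdot\xi\gg 1$, and the truncation by $(u^*(0,\cdot),v^*(0,\cdot))\gg(u_0,v_0)$ built into the definition of $X_1^+(\xi)\times X_2^+(\xi)$ handles the far-left region. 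Comparison then gives $(u,v)(t,\cdot;u_0,v_0)\le(\bar u,\bar v)(t,\cdot)$ for every $t\ge 0$, so both components decay like $e^{-\mu(x\cdot\xi-ct)}$ uniformly on $\{x\cdot\xi\ge ct\}$, and sending $c\downarrow\bar c^*_{\inf}(\xi)$ finishes the proof.

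The main obstacle will be closing the displayed $v$-inequality: because the linearization of \eqref{main-eq1} at $\tilde E_1=(0,0)$ is lower triangular with off-diagonal coupling $b_2 v^* u$, the $v$-branch of any traveling ansatz is forced to propagate at the same exponential rate $\mu$ as the $u$-branch, rather than at its own (generally smaller) uncoupled rate. Sustaining this requires a definite spectral gap between the diagonal entries $a_1-c_1 v^*$ and $a_2-2c_2 v^*$ large enough to absorb $b_2 v^*$, and it is precisely this gap that (HL1) and (HL2) quantify. Once the gap is available, the quadratic corrections are tamed purely by the choice of the ratio $A/B$, while (HL0) plays the auxiliary role of keeping trajectories inside the cooperativity slab so that the scalar comparison principle applies in the first place.
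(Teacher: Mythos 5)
Your overall strategy (an exponentially decaying super-solution built from a principal eigenfunction, truncated by $(u^*,v^*)$, compared against solutions of the cooperative system \eqref{main-eq1}) is the same as the paper's, and your algebra on the exponential branch --- the ratio conditions $b_1A\ge c_1B$, $b_2A\ge c_2B$ and the linear gap inequality closed by (HL1), resp.\ by $A/B=\max\{c_{1M}/b_{1L},c_{2M}/b_{2L}\}$ under (HL2) --- mirrors what Lemma \ref{linear-determinacy-lm2} delivers. But there is a genuine gap at the very first step: you ``let $\phi\in\mathcal{X}_p$ be a positive principal eigenfunction for \eqref{eigenvalue-eq0}.'' For nonlocal dispersal operators the principal spectrum point $\lambda_\xi(\mu)$ need \emph{not} be an eigenvalue with a positive eigenfunction in $\mathcal{X}_p$ (the paper recalls this explicitly after Definition \ref{principal-spectrum-point-def}); this is precisely the feature that distinguishes the nonlocal problem from the random-dispersal one. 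The paper circumvents it by perturbing the coefficient $a_1-c_1v^*$ (Proposition \ref{PE-perturbation-prop}, Lemma \ref{linear-determinacy-lm1}) so that a principal eigenfunction exists for an $\epsilon$-larger potential, running the super-solution argument with the perturbed eigenvalue $\bar\lambda_\xi(\mu^*(\xi))$, and letting $\epsilon\to 0$ at the end. Without this approximation step your construction cannot start, and adding it also forces you to re-derive the gap inequalities for the perturbed data, which is what Lemma \ref{linear-determinacy-lm1}(1),(4) and Lemma \ref{linear-determinacy-lm2} are engineered to do.

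The second weak point is the sentence ``Comparison then gives $(u,v)\le(\bar u,\bar v)$.'' The comparison function is a pointwise minimum, hence not a classical super-solution, and \eqref{main-eq1} has no smoothing, so the paper has to work in integral form: it multiplies by $e^{\bar M t}$ with $\bar M$ large (see \eqref{aux-linear-eq2-1}) to make the modified nonlinearities $\bar f,\bar g$ monotone in \emph{both} arguments on the relevant range (controlled through the constant $\tilde d$), and it must verify the pointwise inequality $b_2\bar u^+\ge c_2\bar v^+$ everywhere (inequality \eqref{auux-linear-eq3}) before a Gronwall-type argument closes the comparison. That inequality is exactly where (HL0) enters --- on the set where both components sit on the truncation branch $(u^*,v^*)$ --- together with Lemma \ref{linear-determinacy-lm2} on the exponential branches. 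Your attribution of (HL0) to ``keeping trajectories inside the cooperativity slab'' is off: invariance of $\{0\le u\le u^*,\ 0\le v\le v^*\}$ already follows from Lemma \ref{spreading-lm1} and (HB2), with no reference to (HL0). So the proposal captures the right ansatz and the right role of (HL1)/(HL2), but it is missing the eigenvalue-approximation device that the nonlocal setting requires and it does not actually carry out (or correctly locate the hypotheses needed for) the comparison step with the truncated super-solution.
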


\medskip

We point out the followings.
First, in the spatially homogeneous case, that is,
in the case $a_k(t,x)\equiv a_k(t)$, $b_k(t,x)\equiv b_k(t)$, and $c_k(t,x)\equiv c_k(t)$ ($k=1,2$),
 $c_{\inf}^*(\xi)$ and $c_{\sup}^*(\xi)$
 can be defined in the same way as   in \cite{FaZh} and \cite{WeLeLi}  (see Remark \ref{spreading-speed-rk2} in Section 3 for more detail).

 Second, when $a_k$, $b_k$ and $c_k$ ($k=1,2$) are both spatially and temporally homogeneous,
 $c_{\inf}^*(\xi)=c_{\sup}^*(\xi)$ is proved in \cite{FaZh}. In such case, the existence of traveling wave solutions
 has also been studied in \cite{FaZh} and \cite{PaLi}. The traveling wave problem in general periodic media will be studied
 somewhere else.  It remains open whether $c_{\inf}^*(\xi)=c_{\sup}^*(\xi)$ in the general periodic case.

 Third, Theorem \ref{linear-determinacy-thm} shows that the spreading speed interval is a singleton and is determined
 by the spectrum of \eqref{eigenvalue-eq0}, which is therefore referred to as {\it linear determinacy} for the spreading speed.
As mentioned in the above, the linear determinacy for the spreading speeds of \eqref{main-random-eq} with temporally and
spatially
independent coefficients has been widely studied. Our assumptions for the linear determinacy of \eqref{main-eq1} in
the case that the coefficients are independent of time and space are the same as those in the literature
for the linear determinacy of \eqref{main-random-eq} (see the following remark).

\begin{remark}
\label{linear-determinacy-rk1}
(1) When $a_k$, $b_k$, and $c_k$ ($k=1,2$) are positive constants,
$$
u^*=\frac{a_1}{b_1},\quad v^*=\frac{a_2}{c_2}.
$$
Hence the assumption (HL0) becomes
\begin{equation}
\label{HL0-eq}
\frac{a_1}{a_2}\ge \frac{b_1}{b_2},
\end{equation}
the assumption (HL1) becomes
\begin{equation}
\label{HL1-eq1}
\begin{cases}
a_1+a_2-\frac{a_2c_1}{c_2}-\frac{a_2b_2}{c_2}\ge 0\cr
b_1\ge c_1\cr
b_2\ge c_2,
\end{cases}
\end{equation}
and (HL2) becomes
\begin{equation}
\label{HL2-eq1}
\begin{cases}
a_1+a_2-\frac{a_2c_1}{c_2}-\frac{a_2b_2c_1}{b_1c_2}\ge 0\cr
a_1-\frac{a_2c_1}{c_2}\ge 0.
\end{cases}
\end{equation}

(2) In the case that
$$
a_1=r_1,\quad b_1=r_1,\quad c_1=\tilde a_1 r_1
$$
and
$$
a_2=r_2,\quad b_2=r_2\tilde a_2,\quad c_2=r_2
$$
with
$$\tilde a_1<1\le \tilde a_2,
$$
  \eqref{HL0-eq} always holds, \eqref{HL1-eq1} becomes
\begin{equation}
\label{HL1-eq2}
\frac{\tilde a_2-1}{1-\tilde a_1}\le\frac{r_1}{r_2},
\end{equation}
and \eqref{HL2-eq1} become
\begin{equation}
\label{HL2-eq2}
\frac{\tilde a_1\tilde a_2 -1}{1-\tilde a_1}\le \frac{r_1}{r_2}.
\end{equation}
By \eqref{HL2-eq2}, the assumption (HL2)
 is the same as the condition in Theorem 2.1 in \cite{LeLiWe}.
\end{remark}

  Fourth,
the techniques and theories developed for \eqref{main-eq} can be extended to two species competition systems with different nonlocal dispersal
rates in periodic habitats. To be specific and to control the length of
 the paper, we restrict the study to the case with same dispersal rates in this paper.

 Finally, the methods developed in this paper can also
be applied to two species competition systems with random dispersal or discrete dispersal in periodic habitats (see Section 5 for more detail).

 The rest of the paper is organized as follows. In Section 2, we collect some preliminary materials for the use in later sections.
 We investigate the existence of spreading speed intervals, the lower bounds of spreading speed intervals, and the existence
 of a single spreading speed in Section 3.  Theorems \ref{spreading-speed-thm1} and \ref{spreading-speed-thm2} are proved
 in this section. Section 4 is devoted to the investigation of linear determinacy of spreading speeds and to the proof of
 Theorem \ref{linear-determinacy-thm}.
 The paper is concluded with  some remarks in Section 5 on the applications of the methods developed in this paper to
  two species competition systems with random or discrete dispersals in periodic habitats.

\section{Preliminary Results}

In this section, we collect some preliminary materials for the use in later sections, including principal spectrum point and
principal eigenvalue theory for nonlocal dispersal operators with periodic coefficients; positive periodic solutions and spreading speeds
of single species models in periodic habitats; and some basic properties of two species competition systems with nonlocal dispersal.

 \subsection{Principal spectrum points and principal eigenvalues of nonlocal dispersal operators}

In this subsection, we present some principal spectrum point and principal eigenvalue theory for
time periodic nonlocal dispersal operators.

Let $\mathcal{X}_p$ be as in \eqref{x-pp-space}.
Consider the following eigenvalue problem,
\begin{equation}
\label{eigenvalue-eq}-v_t+ \big( \mathcal{K}_{\xi,\mu} -I +a(\cdot,\cdot) I
\big)v=\lambda v,\quad v\in \mathcal{X}_p,
\end{equation}
where $\xi\in S^{N-1}$, $\mu\in\RR$,  and $a(\cdot,\cdot)\in \mathcal{X}_p$.
 The operator  $a(\cdot,\cdot)I$  is
  as in \eqref{a-op}
 and $\mathcal{K}_{\xi,\mu}:\mathcal{X}_p\to \mathcal{X}_p$ is defined by
\begin{equation}
\label{k-delta-xi-mu-op}
(\mathcal{K}_{\xi,\mu}v)(t,x)=\int_{\RR^N}e^{-\mu(y-x)\cdot\xi}\kappa(y-x)v(t,y)dy.
\end{equation}
We point out that, if $u(t,x)=e^{-\mu(
x\cdot\xi-\frac{\lambda}{\mu}t)}\phi(t,x)$ with $\phi\in
\mathcal{X}_p\setminus\{0\}$ is a solution of
 the linear equation,
\begin{equation}
\label{linearization-eq0} \frac{\p u}{\p t}=\int_{\RR^N}
\kappa(y-x)u(t,y)dy-u(t,x)+a(t,x)u(t,x),\quad x\in\RR^N,
\end{equation}
 then $\lambda$ is an eigenvalue of
\eqref{eigenvalue-eq} or
$-\p_t+\mathcal{K}_{\xi,\mu}-I+a(\cdot,\cdot)I$  and $v=\phi(t,x)$ is a
corresponding eigenfunction.

 Let
$\sigma(-\p_t+ \mathcal{K}_{\xi,\mu}- I+a(\cdot,\cdot)I)$ be the spectrum of
$-\p_t+ \mathcal{K}_{\xi,\mu}- I+a(\cdot,\cdot)I$  acting on $\mathcal{X}_p$ and
\vspace{-.05in}\begin{equation*}
\lambda_0(\xi,\mu,a):=\sup\{{\rm Re}\lambda\,|\,\lambda\in \sigma( -\p_t+\mathcal{K}_{\xi,\mu}- I+a(\cdot,\cdot)I)\}.
\vspace{-.05in}\end{equation*}
Observe that if $\mu=0$, $\lambda_0(\xi,\mu,a)$ is independent of $\xi$
and hence we put
\begin{equation}
\label{lambda-delta-a}
\lambda_0(a):=\lambda_0(\xi,0,a)\quad \forall\,\, \xi\in
S^{N-1}.
\end{equation}

\begin{definition}
\label{principal-spectrum-point-def}
We call $\lambda_0(\xi,\mu,a)$  the {\rm  principal spectrum point} of $-\p_t+\mathcal{K}_{\xi,\mu}- I+a(\cdot,\cdot)I$.
$\lambda_0(\xi,\mu,a)$ is called the {\rm principal eigenvalue} of $
-\p_t+\mathcal{K}_{\xi,\mu}- I+a(\cdot,\cdot)I$  if  $\lambda_0(\xi,\mu,a)$ is an isolated
 eigenvalue of $-\p_t+\mathcal{K}_{\xi,\mu}-I+a(\cdot, \cdot)I$ with  finite algebraic multiplicity and a positive
  eigenfunction
 $v\in \mathcal{X}_p^+$,  and for every $\lambda\in
\sigma(-\p_t+ \mathcal{K}_{\xi,\mu}- I+a(\cdot,\cdot)I) \setminus
\{\lambda_0(\xi,\mu,a)\}$, ${\rm
Re}\lambda\le \lambda_0(\xi,\mu,a)$.
\end{definition}

Observe that $-\p_t + \mathcal{K}_{\xi,\mu}- I+a(\cdot,\cdot)I$ may not have a principal eigenvalue  (see an example in
\cite{ShZh1}), which reveals some essential difference between random dispersal operators and nonlocal dispersal operators.
Let
$$
\hat a(x)=\frac{1}{T}\int_0^T a(t,x)dt.
$$

\begin{proposition}
\label{PE-sufficient-prop}
 If $\hat a(\cdot)$ is $C^N$ and  the partial
derivatives of $\hat a(x)$ up to order $N-1$ at some $x_0$ are zero {\rm (we refer this to as a vanishing condition)}, where $x_0$ is such that
$\hat a(x_0)=\max_{x\in\RR^N}\hat a(x)$, then $\lambda_0(\xi,\mu,a)$ is the principal eigenvalue of $-\p_t+\mathcal{K}_{\xi,\mu}-I+a(\cdot,\cdot)I$
for all $\xi\in S^{N-1}$ and $\mu\in\RR$.
\end{proposition}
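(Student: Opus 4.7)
The plan is to lift the eigenvalue problem on $\mathcal{X}_p$ to a Poincar\'e-map problem on $X_p$ and apply a Krein-Rutman argument once the principal spectrum point has been separated from the essential spectrum using the vanishing condition on $\hat a$.

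First, let $\{U(t,s)\}_{t\ge s}$ be the evolution family on $X_p$ generated by $v_t=\mathcal{K}_{\xi,\mu}v-v+a(t,\cdot)v$, and set $U:=U(T,0)$. Standard Floquet theory identifies $\sigma(-\p_t+\mathcal{K}_{\xi,\mu}-I+a(\cdot,\cdot)I)$ on $\mathcal{X}_p$ with $\sigma(U)$ via $\lambda\leftrightarrow e^{T\lambda}$, so $\lambda_0(\xi,\mu,a)=T^{-1}\ln r(U)$, and $\lambda_0$ is a principal eigenvalue exactly when $r(U)$ is a simple isolated eigenvalue of $U$ strictly above $r_{\text{ess}}(U)$ with a positive eigenfunction. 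Using the Duhamel formula with $\mathcal{K}_{\xi,\mu}$ as a perturbation of $-I+a(t,\cdot)I$, one writes $U=M+R$, where $(M\phi)(x)=e^{T(-1+\hat a(x))}\phi(x)$ is the pure multiplier part and $R$ is an operator whose kernel is built from finite iterated convolutions of $\kappa$; since $\kappa\in C^1$ is compactly supported and $X_p$ consists of continuous functions on the torus, $R$ is compact. Weyl's essential-spectrum theorem then gives $r_{\text{ess}}(U)=r_{\text{ess}}(M)=r(M)=e^{T(-1+\hat a(x_0))}$.

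Second, and this is the main analytic step, I would show $\lambda_0(\xi,\mu,a)>-1+\hat a(x_0)$ by testing the associated quadratic form on a nonnegative bump $\phi_\varepsilon\in X_p$ concentrated at $x_0$ at scale $\varepsilon\ll 1$. After time-averaging, the estimate reduces to a comparison between
\[
\int_{\RR^N}\kappa(z)e^{-\mu z\cdot\xi}\phi_\varepsilon(x+z)\,dz-\phi_\varepsilon(x)+\hat a(x)\phi_\varepsilon(x)
\]
and $(-1+\hat a(x_0))\phi_\varepsilon(x)$. Since $\hat a\in C^N$ with vanishing derivatives of order $\le N-1$ at $x_0$, the Taylor expansion of $\hat a$ around $x_0$ produces an error of order $\varepsilon^N$, while the dispersal term provides a strictly positive gain of the same order $\varepsilon^N$ coming from the nontrivial moments of $\kappa(z)e^{-\mu z\cdot\xi}$. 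Choosing $\varepsilon$ small enough leaves a strictly positive net remainder, which translates into the strict inequality $\lambda_0(\xi,\mu,a)>-1+\hat a(x_0)=\frac{1}{T}\ln r_{\text{ess}}(U)$.

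Finally, because $\kappa>0$ on a ball of radius $r_0$, a finite power of $U$ is strongly positive on $X_p^+$, so the Krein-Rutman theorem at the spectral radius $r(U)>r_{\text{ess}}(U)$ delivers a simple isolated eigenvalue of $U$ with positive eigenfunction $\phi_0\in X_p^{++}$. Pulling back, $v(t,x):=e^{-\lambda_0 t}U(t,0)\phi_0(x)$ lies in $\mathcal{X}_p^+$ and realizes $\lambda_0(\xi,\mu,a)$ as the principal eigenvalue of $-\p_t+\mathcal{K}_{\xi,\mu}-I+a(\cdot,\cdot)I$. The main obstacle is the strict inequality $\lambda_0>-1+\hat a(x_0)$: without concentration of $\hat a$ near its maximum, the convolution gain cannot beat the pointwise loss at $x_0$. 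The vanishing condition is precisely calibrated so that the $N$-dimensional kernel-moment gain matches the Taylor decay of $\hat a$, and aligning these two expansions carefully is the delicate part of the argument.
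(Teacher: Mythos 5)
Your overall architecture is sound and is essentially the route taken in the reference the paper cites for this proposition (\cite{RaSh}, Theorem B(1), to which the paper's proof simply defers): pass to the Poincar\'e map, identify the essential spectral radius with $e^{T(-1+\hat a(x_0))}$ via a compact-perturbation (Duhamel) decomposition, prove the strict gap $\lambda_0(\xi,\mu,a)>-1+\hat a(x_0)$, and then invoke a generalized Krein--Rutman theorem valid when $r(U)>r_{\rm ess}(U)$ together with strong positivity of the evolution operator. The reduction steps and the final Krein--Rutman step are fine (modulo the fact that the operator is not self-adjoint, so your ``quadratic form'' test must really be a sub-solution/comparison argument, and the time-averaging inequality $\lambda_0(\xi,\mu,a)\ge\lambda_0(\xi,\mu,\hat a)$ needs its own justification, though it is standard).

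The genuine gap is in your main analytic step, the strict inequality $\lambda_0>-1+\hat a(x_0)$. With a bump $\phi_\varepsilon$ of height $1$ concentrated at scale $\varepsilon$, the dispersal gain is
\[
\int_{\RR^N}e^{-\mu z\cdot\xi}\kappa(z)\phi_\varepsilon(x+z)\,dz\ \sim\ c_\kappa\,\varepsilon^N ,
\]
while the loss on the support is $(\hat a(x_0)-\hat a(x))\phi_\varepsilon(x)\le C\,\varepsilon^N$, where $C$ involves the $N$-th order derivatives of $\hat a$ near $x_0$. Both sides scale identically in $\varepsilon$, so ``choosing $\varepsilon$ small enough'' cannot produce a positive net remainder: if $C>c_\kappa$ the inequality fails for every $\varepsilon$, and shrinking $\varepsilon$ does not help. (Also, the gain is a volume effect $\kappa(0)\varepsilon^N$, not a moment effect of $\kappa(z)e^{-\mu z\cdot\xi}$.) The mechanism that actually exploits the vanishing condition is different: flatness of order $N$ at $x_0$ in dimension $N$ gives $\hat a(x_0)-\hat a(x)\le C|x-x_0|^N$, hence $1/(\hat a(x_0)-\hat a(\cdot))$ is \emph{non-integrable} near $x_0$. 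One then tests with functions of the form $\phi_\delta(x)\approx\big(\hat a(x_0)-\hat a(x)+\delta\big)^{-1}$ (suitably truncated, supported on a \emph{fixed} ball of radius comparable to $r_0$, not a shrinking one): the pointwise requirement $\mathcal{K}_{\xi,\mu}\phi_\delta\ge(\hat a(x_0)-\hat a+\delta)\phi_\delta$ reduces to a bound by a quantity of order $1$, while $\mathcal{K}_{\xi,\mu}\phi_\delta\gtrsim\int\phi_\delta\to\infty$ as $\delta\to0$ by non-integrability, which yields $\lambda_0\ge-1+\hat a(x_0)+\delta'$ for some $\delta'>0$. Without replacing your $\varepsilon$-matching argument by this (or an equivalent) construction, the strict separation from the essential spectrum, and hence the whole proof, does not go through.
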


\begin{proof}
It follows from the arguments of \cite[Theorem B (1)]{RaSh}.
\end{proof}

Proposition \ref{PE-sufficient-prop} provides a useful sufficient condition for
$\lambda_0(\xi,\mu,a)$ to be the principal eigenvalue of  $-\p_t+\mathcal{K}_{\xi,\mu}-I+a(\cdot,\cdot)I$.
The following proposition shows that $\lambda_0(\xi,\mu,a)$ is
 the  principal eigenvalue of $-\p_t+\mathcal{K}_{\xi,\mu}-I+a(\cdot,\cdot)I$ for $a$ in a dense subset
 of $\mathcal{X}_p$.

\begin{proposition}
\label{PE-perturbation-prop}
 For any $\epsilon>0$ and $M>0$, there are $a^\pm(\cdot,\cdot)$ satisfying the vanishing condition in Proposition \ref{PE-sufficient-prop}
  such that
$$
a(t,x)-\epsilon\le a^-(t,x)\leq a(t,x)\leq a^+(t,x)\le a(t,x)+\epsilon
$$
for $t\in\RR$ and $x\in\RR^N$,
and
$$
|\lambda_0(\xi,\mu,a)-\lambda_0(\xi,\mu,a^\pm)|<\epsilon
$$
for $\xi\in S^{N-1}$ and $|\mu|\leq M$.
\end{proposition}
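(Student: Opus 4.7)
The plan is to derive the conclusion from a uniform Lipschitz estimate for $\lambda_0(\xi,\mu,\cdot)$ combined with a two-step surgery on $a$: first mollify in $x$ so that $\hat a$ becomes smooth, then add a localized cut-off that flattens $\hat a$ near one of its maximizers so as to produce the vanishing condition of Proposition~\ref{PE-sufficient-prop}.

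The first step I would carry out is to prove
\begin{equation*}
|\lambda_0(\xi,\mu,a_1)-\lambda_0(\xi,\mu,a_2)|\le\|a_1-a_2\|_{\mathcal{X}_p}\qquad\forall\,\xi\in S^{N-1},\ \mu\in\RR,\ a_1,a_2\in\mathcal{X}_p.
\end{equation*}
This rests on two elementary properties of the operator $-\p_t+\mathcal{K}_{\xi,\mu}-I+a(\cdot,\cdot)I$: shift invariance, $\lambda_0(\xi,\mu,a+c)=\lambda_0(\xi,\mu,a)+c$ for any constant $c\in\RR$ (adding $c$ to $a$ translates the operator by $cI$), and monotonicity in $a$, $a_1\le a_2\Rightarrow\lambda_0(\xi,\mu,a_1)\le\lambda_0(\xi,\mu,a_2)$, which follows from the comparison principle for the twisted linear equation $u_t=\mathcal{K}_{\xi,\mu}u-u+a(t,x)u$ (whose kernel $e^{-\mu(y-x)\cdot\xi}\kappa(y-x)$ is non-negative) together with the characterization of $\lambda_0(\xi,\mu,a)$ as the exponential growth rate of this evolution on $\mathcal{X}_p$. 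Sandwiching $a_1$ between $a_2\pm\|a_1-a_2\|_\infty$ and applying the two facts gives the estimate, uniform in $\xi$ and $\mu$.

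Next, fix $\epsilon>0$ and set $\epsilon_0=\epsilon/5$. Let $\eta_\delta$ be a standard non-negative $C^\infty$ mollifier on $\RR^N$ supported in $B_\delta(0)$ and put $\tilde a(t,x)=\int_{\RR^N} a(t,y)\eta_\delta(x-y)\,dy$. Then $\tilde a\in\mathcal{X}_p$ is $C^\infty$ in $x$, and by uniform continuity of $a$ we can pick $\delta$ so small that $\|\tilde a-a\|_{\mathcal{X}_p}\le\epsilon_0$; consequently $\hat{\tilde a}$ is smooth and $p_i$-periodic in each $x_i$. Let $M_0=\max_x\hat{\tilde a}(x)$ be attained at some $x_0$, choose $r>0$ so small that $M_0-\hat{\tilde a}(x)\le\epsilon_0$ on $B_r(x_0)$ and that $B_r(x_0)$ lies inside a single fundamental cell about $x_0$, and pick a smooth $\beta:\RR^N\to[0,1]$ equal to $1$ on $B_{r/2}(x_0)$, equal to $0$ outside $B_r(x_0)$ in that cell, and then extended by $p_i$-periodicity. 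Define
\begin{equation*}
a^{\pm}(t,x)=\tilde a(t,x)+\beta(x)\bigl(M_0-\hat{\tilde a}(x)\bigr)+c^{\pm},\qquad c^{+}=\epsilon_0,\ c^{-}=-2\epsilon_0.
\end{equation*}

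Finally, $\widehat{a^{\pm}}(x)=(1-\beta(x))\hat{\tilde a}(x)+\beta(x)M_0+c^{\pm}$. On $B_{r/2}(x_0)$ this equals the constant $M_0+c^{\pm}$, so $\widehat{a^{\pm}}$ is $C^{\infty}$ there with every partial derivative vanishing at $x_0$; elsewhere it is a convex combination of $\hat{\tilde a}\le M_0$ and $M_0$ shifted by $c^{\pm}$, hence bounded by $M_0+c^{\pm}$. Therefore $x_0$ is a global maximizer of $\widehat{a^{\pm}}$ and the vanishing condition of Proposition~\ref{PE-sufficient-prop} is satisfied. The pointwise bounds $|\tilde a-a|\le\epsilon_0$ and $0\le\beta(M_0-\hat{\tilde a})\le\epsilon_0$ give $0\le a^{+}-a\le 3\epsilon_0$ and $-3\epsilon_0\le a^{-}-a\le 0$, so $a-\epsilon\le a^{-}\le a\le a^{+}\le a+\epsilon$. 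Combined with the Lipschitz estimate of the second paragraph, $|\lambda_0(\xi,\mu,a)-\lambda_0(\xi,\mu,a^{\pm})|\le 3\epsilon_0<\epsilon$ uniformly in $\xi\in S^{N-1}$ and $\mu\in\RR$ (in particular for $|\mu|\le M$). The only non-routine ingredient is the Lipschitz estimate, which hinges on monotonicity of $\lambda_0$ in $a$; this is a standard property of positive evolution families and can either be cited from \cite{RaSh} or verified directly via the comparison argument sketched above, and I expect it to be the main obstacle.
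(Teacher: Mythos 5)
Your argument is correct, and it is genuinely more self-contained than what the paper does: the paper disposes of Proposition \ref{PE-perturbation-prop} with a one-line citation to \cite[Proposition 3.7]{RaShZh}, whereas you reconstruct the result from scratch. Your two ingredients both check out. The $1$-Lipschitz estimate $|\lambda_0(\xi,\mu,a_1)-\lambda_0(\xi,\mu,a_2)|\le\|a_1-a_2\|_{\mathcal{X}_p}$ follows exactly as you say from the shift property $\lambda_0(\xi,\mu,a+c)=\lambda_0(\xi,\mu,a)+c$ and monotonicity in $a$; for the latter you can lean directly on Proposition \ref{growth-rate-prop} of the paper, noting that the evolution family $\Phi(t,s;\xi,\mu,a)$ is positive (the kernel $e^{-\mu(y-x)\cdot\xi}\kappa(y-x)$ is nonnegative), so that comparison gives $\Phi_1(t,s)u_0\le\Phi_2(t,s)u_0$ for $u_0\ge 0$ and hence $\|\Phi_1(t,s)\|\le\|\Phi_2(t,s)\|$ --- the one small step worth writing out is that the operator norm of a positive operator on $X_p$ with the sup norm is attained on nonnegative functions dominated by the constant $1$, i.e. $\|\Phi(t,s)\|=\|\Phi(t,s)\mathbf{1}\|_\infty$. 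The mollify-and-flatten surgery is also sound: $\widehat{a^{\pm}}=(1-\beta)\hat{\tilde a}+\beta M_0+c^{\pm}$ is smooth, globally maximized at $x_0$, and constant near $x_0$, so the vanishing condition of Proposition \ref{PE-sufficient-prop} holds, and your bookkeeping with $\epsilon_0=\epsilon/5$ gives $a-\epsilon\le a^-\le a\le a^+\le a+\epsilon$ and $|\lambda_0(\xi,\mu,a)-\lambda_0(\xi,\mu,a^\pm)|\le 3\epsilon/5<\epsilon$. What your route buys beyond the citation is uniformity: since the Lipschitz constant is $1$ independently of $(\xi,\mu)$, you get the spectral estimate for all $\mu\in\RR$, not merely $|\mu|\le M$ as stated in the proposition.
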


\begin{proof}
It follows from \cite[Proposition 3.7]{RaShZh}.
\end{proof}

The principal spectrum point  $\lambda_0(\xi,\mu,a)$ of \eqref{eigenvalue-eq} is closely related to the largest  growth rate of the solutions of
\begin{equation}
\label{growth-eq}
u_t=\int_{\RR^N}e^{-\mu(y-x)\cdot\xi}\kappa(y-x)u(t,y)dy-u(t,x)+a(t,x)u(t,x),\quad x\in\RR^N
\end{equation}
in $X_p$.
For given $u_0\in X_p$, let $u(t,\cdot;s,u_0)$ be the solution of \eqref{growth-eq} with $u(s,\cdot;s,u_0)=u_0(\cdot)$.
Let $\Phi(t,s;\xi,\mu,a): X_p\to X_p$ be defined by
$$
\Phi(t,s;\xi,\mu,a)u_0=u(t,\cdot;s,u_0).
$$
Note that $\Phi(t,s;\xi,\mu,a)$ is strongly monotone in the sense that for
any $t>s$ and $u_0\in X_p^{+}\setminus\{0\}$,
$\Phi(t,s;\xi,\mu,a)u_0\in X_p^{++}$ (see the arguments of \cite[Proposiion 2.2]{ShZh1}).
We have

\begin{proposition}
\label{growth-rate-prop}
For any given $\xi\in S^{N-1}$ and $\mu\in\RR$,
$$
\lambda_0(\xi,\mu,a)=\lim_{t-s\to\infty}\frac{\ln \|\Phi(t,s;\xi,\mu,a)\|}{t-s}.
$$
\end{proposition}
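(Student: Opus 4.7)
The plan is to reduce the infinite-time growth of $\Phi(t,s;\xi,\mu,a)$ to the spectral radius of the period map and then identify that spectral radius with $\lambda_0(\xi,\mu,a)$.

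First, I would exploit the $T$-periodicity of $a$ in $t$ to obtain $\Phi(t+T,s+T;\xi,\mu,a)=\Phi(t,s;\xi,\mu,a)$, so that by the cocycle property $\Phi(nT,0;\xi,\mu,a)=P^n$, where $P:=\Phi(T,0;\xi,\mu,a)$ acts on $X_p$. Using that $\sup_{0\le s\le t\le 2T}\|\Phi(t,s;\xi,\mu,a)\|<\infty$ (by continuous dependence and compactness of the time interval) and interpolating via the cocycle property between consecutive integer multiples of $T$, one gets
$$
\lim_{t-s\to\infty}\frac{\ln\|\Phi(t,s;\xi,\mu,a)\|}{t-s}=\lim_{n\to\infty}\frac{\ln\|P^n\|}{nT}=\frac{\ln r(P)}{T},
$$
the last equality being Gelfand's spectral radius formula applied to $P$.

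Second, I would show that $\lambda_0(\xi,\mu,a)=\ln r(P)/T$. For given $f\in\mathcal{X}_p$ and $\lambda\in\CC$, the resolvent equation $\bigl(\lambda I-(-\p_t+\mathcal{K}_{\xi,\mu}-I+a(\cdot,\cdot)I)\bigr)v=f$ with $v\in\mathcal{X}_p$ can be reinterpreted as the inhomogeneous linear evolution equation $v_t=\bigl(\mathcal{K}_{\xi,\mu}-I+a(\cdot,\cdot)I-\lambda I\bigr)v+f$ on $X_p$ together with the $T$-periodicity constraint $v(T,\cdot)=v(0,\cdot)$. By Duhamel's formula applied to this perturbed flow, the map sending $v(0,\cdot)$ to the value at time $T$ of the homogeneous solution equals $e^{-\lambda T}P$; hence a periodic solution exists uniquely for every periodic forcing iff $e^{-\lambda T}P-I$ is invertible, i.e. iff $e^{\lambda T}\notin\sigma(P)$. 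Taking the supremum of real parts in the resulting identity
$$
\sigma\bigl(-\p_t+\mathcal{K}_{\xi,\mu}-I+a(\cdot,\cdot)I\bigr)=\{\lambda\in\CC\,:\,e^{\lambda T}\in\sigma(P)\}
$$
and using $r(P)=\sup\{|\zeta|:\zeta\in\sigma(P)\}$ yields $\lambda_0(\xi,\mu,a)=\ln r(P)/T$, which combined with Step~1 proves the proposition.

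The main obstacle is making the spectral correspondence in the second step fully rigorous, since the operator $-\p_t+\mathcal{K}_{\xi,\mu}-I+a(\cdot,\cdot)I$ on $\mathcal{X}_p$ requires a precise domain (incorporating $\p_t$), and the equivalence between invertibility of the resolvent and periodic solvability of the evolution problem must be established for mild (Duhamel) rather than classical solutions. This can be handled within the functional-analytic framework for time-periodic nonlocal dispersal operators developed in \cite{ShZh1}; replacing $\kappa(y-x)$ by $e^{-\mu(y-x)\cdot\xi}\kappa(y-x)$ does not disturb any of the structural arguments there (the new kernel remains bounded, continuous, and supported where $\kappa$ is), so the identity for $\mu=0$ transfers verbatim and the cocycle reduction above finishes the proof.
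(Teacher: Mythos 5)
The paper offers no argument of its own here: its ``proof'' is a one-line citation to \cite{RaSh} (Propositions 3.3 and 3.10), so your proposal is a genuinely different, self-contained route. Your two-step Floquet/evolution-semigroup scheme --- (i) periodicity plus the cocycle property plus Gelfand's formula to identify the exponential growth rate of $\Phi(t,s;\xi,\mu,a)$ with $\ln r(P)/T$ for the period map $P=\Phi(T,0;\xi,\mu,a)$ on $X_p$, and (ii) the resolvent-to-periodic-solvability correspondence identifying $\sigma(-\p_t+\mathcal{K}_{\xi,\mu}-I+a(\cdot,\cdot)I)$ on $\mathcal{X}_p$ with $\{\lambda:\,e^{\lambda T}\in\sigma(P)\}$ --- is the standard machinery behind such statements and is very likely what the cited propositions encapsulate; what your route buys is independence from the reference, while the citation buys you the technical points already worked out in the time-periodic nonlocal setting. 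A helpful simplification you could state explicitly: since $\mathcal{K}_{\xi,\mu}-I+a(t,\cdot)I$ is a bounded operator on $X_p$, norm-continuous in $t$, mild solutions are classical, so the domain issues you list as the ``main obstacle'' essentially disappear.

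Two substantive points do need attention before your sketch is a proof. First, the easy half of your spectral identity (if ${\rm Re}\,\lambda>\ln r(P)/T$ then $r(e^{-\lambda T}P)<1$, Duhamel gives a bounded resolvent, hence $\lambda_0(\xi,\mu,a)\le \ln r(P)/T$) is fine, but the reverse inequality is not immediate from ``non-invertibility of $I-e^{-\lambda T}P$'': when $e^{\lambda T}$ lies only in the approximate point (or residual) spectrum of $P$, injectivity may hold and you do not directly get a periodic forcing with no solution. You must either transport approximate eigenfunctions of $P$ into approximate eigenfunctions of the generator on $\mathcal{X}_p$, or show that $\lambda\in\rho(-\p_t+\mathcal{K}_{\xi,\mu}-I+aI)$ would yield a bounded inverse of $I-e^{-\lambda T}P$ by evaluating the resolvent on suitably chosen forcings; this is exactly the step your ``iff'' glosses over. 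Second, Gelfand's formula gives a finite limit only if $r(P)>0$; this should be recorded (it follows from positivity: dropping the nonnegative nonlocal term and comparing gives $P^n u_0\ge e^{(\inf a-1)nT}u_0$ for $u_0\ge 0$, e.g.\ the constant function $1$). Finally, the framework you invoke, \cite{ShZh1}, concerns space-periodic, time-independent habitats; the time-periodic versions of these spectral facts are those of \cite{HuShVi} and \cite{RaSh} --- i.e.\ precisely the source the paper cites --- so your closing appeal should point there rather than to \cite{ShZh1}.
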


\begin{proof}
It follows from \cite[Propositions 3.3 and 3.10]{RaSh}.
\end{proof}

Consider the nonhomogeneous linear equation,
\begin{equation}
\label{nonhomogeneous-eq} \frac{\p u}{\p t}=\int_{\RR^N}e^{-\mu(y-x)\cdot\xi}\kappa(y-x)u(t,y)dy
-u(t,x)+a(t,x)u(t,x)+h(t,x),\quad x\in\RR^N,
\end{equation}
where $h\in\mathcal{X}_p$. We have

\begin{proposition}
\label{nonhomogeneous-prop}
If $\lambda_0(\xi,\mu,a)<0$, then for any given $h(\cdot,\cdot)\in\mathcal{X}_p$, \eqref{nonhomogeneous-eq}
has a unique entire solution $u^{**}(\cdot,\cdot)\in \mathcal{X}_p$. Moreover, $u^{**}(\cdot,\cdot)$ is  a globally stable solution
of \eqref{nonhomogeneous-eq}  with respect to perturbations in $X_p$, and if $h(t,x)\ge 0$ and $h(t,x)\not \equiv 0$, then
$u^{**}(t,\cdot)\in X^{++}_p$.
\end{proposition}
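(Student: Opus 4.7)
The plan is to build $u^{**}$ via a variation of constants formula using the solution operator $\Phi(t,s;\xi,\mu,a)$ from Proposition \ref{growth-rate-prop}, and then exploit the exponential decay of $\Phi$ that follows from $\lambda_0(\xi,\mu,a)<0$. First I would fix $\delta>0$ with $\lambda_0(\xi,\mu,a)+\delta<0$ and use Proposition \ref{growth-rate-prop} to extract a constant $C>0$ such that
\begin{equation*}
\|\Phi(t,s;\xi,\mu,a)\|_{X_p\to X_p}\le C e^{(\lambda_0(\xi,\mu,a)+\delta)(t-s)}\qquad(t\ge s).
\end{equation*}
This is the key quantitative input. Since \eqref{nonhomogeneous-eq} is time-periodic with period $T$, one also has the cocycle/periodicity identity $\Phi(t+T,s+T;\xi,\mu,a)=\Phi(t,s;\xi,\mu,a)$.

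Next I would define
\begin{equation*}
u^{**}(t,\cdot)=\int_{-\infty}^{t}\Phi(t,\tau;\xi,\mu,a)\,h(\tau,\cdot)\,d\tau.
\end{equation*}
The exponential bound above, together with $\|h(\tau,\cdot)\|\le\|h\|_{\mathcal{X}_p}$, guarantees absolute convergence in $X_p$ uniformly in $t$, so $u^{**}(t,\cdot)\in X_p$ is bounded in $t$. Time-continuity of $u^{**}$ in $(t,x)$ follows from the standard continuity properties of $\Phi$ applied to a dominated integrand, so $u^{**}\in\mathcal{X}_p$ once periodicity is established. Periodicity in $t$ follows by the change of variable $\tau\mapsto\tau-T$ combined with $h(\tau+T,\cdot)=h(\tau,\cdot)$ and the cocycle identity; periodicity in each $x_i$ is inherited from $\kappa$ and $a$ via uniqueness of solutions in $X_p$. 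Verifying that $u^{**}$ actually solves \eqref{nonhomogeneous-eq} is the usual differentiation under the integral sign combined with the defining property of $\Phi$.

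For uniqueness, if $u_1,u_2\in\mathcal{X}_p$ are two entire solutions, then $w:=u_1-u_2\in\mathcal{X}_p$ solves the homogeneous equation, so $w(t,\cdot)=\Phi(t,s;\xi,\mu,a)w(s,\cdot)$ for all $t\ge s$. The exponential decay bound then forces $\|w(t,\cdot)\|\le C\|w\|_{\mathcal{X}_p}e^{(\lambda_0(\xi,\mu,a)+\delta)(t-s)}\to 0$ as $s\to-\infty$, so $w\equiv 0$. Global stability in $X_p$ is entirely analogous: for any solution $u(t,\cdot;s,u_0)$ of \eqref{nonhomogeneous-eq} with initial data $u_0\in X_p$ at time $s$, the difference $u(t,\cdot;s,u_0)-u^{**}(t,\cdot)$ equals $\Phi(t,s;\xi,\mu,a)(u_0-u^{**}(s,\cdot))$ by uniqueness of the initial value problem, which decays to zero as $t\to\infty$.

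Finally, for positivity, if $h\ge 0$ and $h\not\equiv 0$, then each integrand $\Phi(t,\tau;\xi,\mu,a)h(\tau,\cdot)$ lies in $X_p^+$ by the order-preserving property of $\Phi$, so $u^{**}(t,\cdot)\in X_p^+$. Since $h\not\equiv 0$, there is some $\tau_0<t$ and a neighborhood on which $h(\tau_0,\cdot)\not\equiv 0$; the strong monotonicity of $\Phi(t,\tau;\xi,\mu,a)$ recalled right before Proposition \ref{growth-rate-prop} promotes this to $\Phi(t,\tau_0;\xi,\mu,a)h(\tau_0,\cdot)\in X_p^{++}$, and a short continuity argument shows the contribution from a small interval around $\tau_0$ is bounded below by a strictly positive constant, so $u^{**}(t,\cdot)\in X_p^{++}$. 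The main technical obstacle I anticipate is the careful justification of the exponential decay estimate uniformly in $t,s$ from the asymptotic formula of Proposition \ref{growth-rate-prop}; once that bound is in hand, the rest of the argument is a routine semigroup-style computation.
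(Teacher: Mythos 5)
Your proposal is correct and follows essentially the same route as the paper: the variation-of-constants formula $u^{**}(t,\cdot)=\int_{-\infty}^{t}\Phi(t,\tau;\xi,\mu,a)h(\tau,\cdot)\,d\tau$, exponential decay of $\Phi$ extracted from Proposition \ref{growth-rate-prop} (the paper only needs the bound $\|\Phi(t,s;\xi,\mu,a)\|<e^{(\lambda_0(\xi,\mu,a)+\epsilon)(t-s)}$ for $t-s\ge M$, so your worry about a uniform constant $C$ for all $t\ge s$ is not really an obstacle), uniqueness and global stability by applying the decay to differences solving the homogeneous equation, and positivity via the strong monotonicity of $\Phi$. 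The only cosmetic deviation is in uniqueness, where you let $s\to-\infty$ using boundedness of the periodic difference, while the paper uses $w(0,\cdot)=w(nT,\cdot)$ and the growth-rate characterization to reach a contradiction; both are equivalent.
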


\begin{proof}
We first prove the uniqueness. Suppose that $u^{**}(t,x)$ and $v^{**}(t,x)$ are entire solutions of \eqref{nonhomogeneous-eq}
in $\mathcal{X}_p$. Let $w(t,x)=u^{**}(t,x)-v^{**}(t,x)$. Then $w(t,x)$ is an entire solution of \eqref{growth-eq} in $\mathcal{X}_p$.
We then have
$$
w(0,\cdot)=w(nT,\cdot)=\Phi(nT;\xi,\mu,a)w(0,\cdot)\quad {\rm for\,\, all}\,\, n\in\ZZ.
$$
If $w(0,\cdot)\not =0$, then by Proposition \ref{growth-rate-prop},
$$
0=\lim_{n\to\infty}\frac{\ln\|w(0,\cdot)\|}{nT}=\lim_{n\to\infty}\frac{\ln\|\Phi(nT,0;\xi,\mu,a)w(0,\cdot)\|}{nT}\le \lambda_0(\xi,\mu,a)<0,
$$
which is a contradiction. Hence $w(0,\cdot)=0$. This implies that $w(t,x)\equiv 0$ and then $u^{**}(t,x)\equiv v^{**}(t,x)$.

Next, we prove the existence. Let
\begin{equation}
\label{u-star-star-eq}
u^{**}(t,\cdot)=\int_{-\infty}^t \Phi(t,s;\xi,\mu,a)h(s,\cdot)ds.
\end{equation}
By Proposition \ref{growth-rate-prop}, for given $0<\epsilon<-\lambda_0(\xi,\mu,a)$, there is $M>0$ such that
for $t-s\ge M$,
$$
\|\Phi(t,s;\xi,\mu,a)\|<e^{(\lambda_0(\xi,\mu,a)+\epsilon)(t-s)}.
$$
Hence $u^{**}(t,\cdot)$ is  well defined  and $u^{**}(t,\cdot)\in X_p$ for all $t\in\RR$. Moreover, it is
easy to verify that $u^{**}(t,x)$ is an entire  solution of \eqref{growth-eq}. Note that
\begin{align*}
u^{**}(t+T,\cdot)&=\int_{-\infty}^{t+T}\Phi(t+T,s;\xi,\mu,a)h(s,\cdot)ds\\
&=\int_{-\infty}^t \Phi(t+T,s+T;\xi,\mu,a)h(s+T,\cdot)ds\\
&=\int_{-\infty}^t \Phi(t,s;\xi,\mu,a)h(s,\cdot)ds\\
&=u^{**}(t,\cdot).
\end{align*}
Hence $u^{**}(t,x)$ is an entire solution in $\mathcal{X}_p$.

We now prove the global stability of $u^{**}(\cdot,\cdot)$. For any given $u_0\in X_p$, let
$u(t,\cdot;u_0)$ be the solution of \eqref{nonhomogeneous-eq} with $u(0,x;u_0)=u_0(x)$.
Let $w(t,x)=u^{**}(t,x)-u(t,x;u_0)$. Then $w(t,x)$ is the solution of \eqref{growth-eq}
with $w(0,x)=u^{**}(0,x)-u_0(x)$. If $w(0,x)\not\equiv 0$, then we have
$$
\limsup_{t\to\infty}\frac{\ln\|\Phi(t,0;\xi,\mu,a)w(0,\cdot)\|}{t}\le\lambda_0(\xi,\mu,a)<0.
$$
This implies that $\|w(t,\cdot)\|\to 0$ as $t\to\infty$ exponentially. Therefore,
$u^{**}(t,x)$ is globally stable.

Finally, if $h(t,x)\ge 0$ and $h(t,x)\not\equiv 0$, by the strong monotonicity of $\Phi(t,s;\xi,\mu,a)$ and \eqref{u-star-star-eq},
$u^{**}(t,\cdot)\in X_p^{++}$ for any $t\in\RR$. The lemma is thus proved.
\end{proof}

\subsection{Positive periodic solution and spreading speeds for single species equations with nonlocal dispersal}

In this subsection, we present some results on positive periodic solution and spreading speed
for the  single species equation,
\begin{equation}
\label{main-single-eq}
u_t=\int_{\RR^N}\kappa(y-x)u(t,y)dy-u(t,x)+u(a(t,x)-b(t,x)u),\quad x\in\RR^N
\end{equation}
where $a,b\in\mathcal{X}_p$ and $b(t,x)>0$. Let $X$ be as in \eqref{x-space}.
 By general semigroup theory (see \cite{Paz}), for any $u_0\in X$, \eqref{main-single-eq} has a
unique (local) solution $u(t,x)$ with $u(0,x)=u_0(x)$. Throughout this subsection,
$u(t,x;u_0)$ denotes the solution of \eqref{main-single-eq} with
$u(0,\cdot;u_0)=u_0(\cdot)\in X$. Note that if $u_0\in X_p$, then $u(t,\cdot;u_0)\in X_p$ for
$t$ in the existence interval of $u(t,\cdot;u_0)$.

Let $\tau>0$. A continuous function $u(t,x)$ on $[0,\tau]\times\RR^N$  with
$u(t,\cdot)\in X^+$ is called a {\it super-solution} ({\it
sub-solution}) of \eqref{main-single-eq} on $[0,\tau]$ if
\begin{equation*}
u_t\ge (\le)
\int_{\RR^N}\kappa(y-x)u(t,y)dy-u(t,x)+u(a(t,x)-b(t,x)u),\quad t\in
[0,\tau],\,\,\,  x\in\RR^N.
\end{equation*}

\begin{proposition}
\label{comparison-single-species-prop}
\begin{itemize}
\item[(1)]
If $u^1(t,x)$ and $u^2(t,x)$ are bounded sub- and super-solutions of
\eqref{main-single-eq}  on $[0,\tau]$, respectively, and
$u^1(0,\cdot)\leq u^2(0,\cdot)$,  then $u^1(t,\cdot)\leq
u^2(t,\cdot)\quad {\rm for}\quad t\in [0,\tau].$

\item[(2)] For every  $u_0\in  X^+$, $ u(t,x;u_0)$ exists for all $t\geq 0$.
\end{itemize}
\end{proposition}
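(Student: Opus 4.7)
The plan is to prove the comparison result (1) via an integrating-factor reduction to a scalar Gronwall inequality, and then to deduce (2) by trapping $u(\cdot,\cdot;u_0)$ between the constant sub-solution $0$ and an explicit constant super-solution.

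For (1), I would set $w = u^2 - u^1$ and $c(t,x) = a(t,x) - b(t,x)(u^1(t,x)+u^2(t,x))$; the assumed boundedness of $u^1, u^2$ together with (HB0) makes $c$ bounded on $[0,\tau]\times\RR^N$. Subtracting the sub-solution inequality for $u^1$ from the super-solution inequality for $u^2$ and using $(u^2)^2 - (u^1)^2 = (u^1+u^2)w$ yields
\begin{equation*}
w_t \ge \mathcal{K}w - w + c(t,x)w, \qquad w(0,\cdot)\ge 0,
\end{equation*}
which I rewrite as $w_t + (1-c)w = \mathcal{K}w + g$ with $g\ge 0$. Multiplying by the positive integrating factor $\mu(t,x) := \exp\!\bigl(\int_0^t (1-c(s,x))\,ds\bigr)$, which is bounded above and below by positive constants on $[0,\tau]\times\RR^N$, and integrating from $0$ to $t$, I obtain the representation
\begin{equation*}
w(t,x) = \mu(t,x)^{-1} w(0,x) + \mu(t,x)^{-1}\int_0^t \mu(s,x)\bigl[\mathcal{K}w(s,\cdot)(x) + g(s,x)\bigr]\,ds.
\end{equation*}
Writing $w = w^+ - w^-$ with $w^\pm \ge 0$ and using $\mathcal{K}w \ge -\mathcal{K}w^-$ together with the elementary bound $\|\mathcal{K}\varphi\|_\infty \le \|\varphi\|_\infty$ (since $\int\kappa = 1$), one deduces
\begin{equation*}
\|w^-(t,\cdot)\|_\infty \le C\int_0^t \|w^-(s,\cdot)\|_\infty\,ds
\end{equation*}
for a constant $C$ depending only on $\tau$ and $\|c\|_\infty$. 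Since $\|w^-(0,\cdot)\|_\infty = 0$, Gronwall's lemma forces $w^-\equiv 0$, i.e.\ $u^1 \le u^2$ on $[0,\tau]$.

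For (2), the semigroup theory already cited in the paper produces a local solution $u(t,\cdot;u_0)\in X$ on some maximal interval. Let $b_L = \inf_{t,x} b(t,x) > 0$, $a_M = \sup_{t,x} a(t,x)$, and $M := \max(\|u_0\|_\infty,\, a_M/b_L)$. Then the constant function $M$ satisfies $\mathcal{K}M - M = 0$ and $M(a - bM)\le 0$, so it is a super-solution of \eqref{main-single-eq}, while the zero function is trivially a sub-solution. Part (1) then pins $0\le u(t,\cdot;u_0)\le M$ on the whole existence interval. Because this $L^\infty$ bound is uniform in $t$, the standard continuation argument from semigroup theory extends the solution to all $t\ge 0$.

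The main obstacle is the non-compactness of $\RR^N$: a classical pointwise maximum-principle argument on $w$ is awkward because $\inf_x w(t,\cdot)$ need not be attained. The integrating-factor/Gronwall route sidesteps this issue entirely by working only with sup-norms and the positivity of $\mathcal{K}$; the uniform boundedness of $\mu$ and $\mu^{-1}$ on $[0,\tau]\times\RR^N$ is exactly what turns the pointwise representation of $w$ into a clean scalar integral inequality.
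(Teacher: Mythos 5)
Your argument is correct, and it is worth noting that the paper itself gives no proof here: it simply invokes ``the arguments in [ShZh1, Proposition 2.1]''. Your write-up therefore supplies what the paper outsources. In substance your route is the standard one behind that citation --- subtract the two inequalities, observe that the difference $w=u^2-u^1$ satisfies a linear differential inequality $w_t\ge \mathcal{K}w-w+c\,w$ with $c$ bounded, and exploit the positivity and $L^\infty$-contractivity of $\mathcal{K}$ --- but your implementation differs in a useful way: instead of the usual ``first touching time''/pointwise infimum argument (which on $\RR^N$ has to be patched because $\inf_x w(t,x)$ need not be attained, typically by perturbing with $\epsilon e^{Kt}$), you pass through the integrating-factor representation and run Gronwall on $\|w^-(t,\cdot)\|_\infty$. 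This is cleaner and avoids the non-attainment issue entirely, as you point out; the only hygiene items are that the definition of sub/super-solution already grants the time-differentiability you use, and that $s\mapsto\|w^-(s,\cdot)\|_\infty$ is bounded and (lower semicontinuous, hence) measurable, so the integral form of Gronwall applies. For (2), trapping the solution between $0$ and the constant $M=\max\{\|u_0\|_\infty, a_M/b_L\}$ (with $b_L>0$ by continuity and periodicity of $b$) and then continuing the mild solution is exactly the expected argument. One small remark: as stated, Proposition 2.5(1) requires sub- and super-solutions to take values in $X^+$, so comparing the local solution (whose sign is not yet known) with $0$ is formally circular; since your Gronwall proof never uses the sign condition, this is cosmetic, but it would be worth a sentence saying that (1) holds for bounded sub/super-solutions without the positivity restriction, which is the version you actually apply.
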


\begin{proof}
It follows from the arguments in \cite[Proposition 2.1]{ShZh1}.
\end{proof}


\begin{proposition}
\label{single-species-periodic-solution-prop}
Suppose that $\lambda_0(a)>0$. Then there is a unique positive periodic solution $u^*(\cdot,\cdot)\in\mathcal{X}_p^+\setminus\{0\}$
of \eqref{main-single-eq}. Moreover, for any $u_0\in {X}_p^+\setminus\{0\}$,
$$
u(t,x;u_0)-u^*(t,x)\to 0
$$
as $t\to\infty$ uniformly in $x$.
\end{proposition}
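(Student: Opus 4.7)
I would build $u^*$ as a fixed point of the period-$T$ solution map $P: X_p^+\to X_p^+$, $P(u_0):=u(T,\cdot;u_0)$, by monotone iteration between ordered sub- and super-solutions, and then extract both uniqueness and global attraction from this pair. For a super-solution, any constant $M \ge \sup_{(t,x)} a(t,x)/\inf_{(t,x)} b(t,x)$ satisfies $\mathcal{K}M - M + M(a-bM) \le 0$. For a sub-solution the hypothesis $\lambda_0(a)>0$ is not directly usable (it need not be an eigenvalue), so I would invoke Proposition~\ref{PE-perturbation-prop} to pick $a^-\in\mathcal{X}_p$ with $a^-\le a$, $\lambda_0(a^-)>0$, and $a^-$ satisfying the vanishing condition, so that Proposition~\ref{PE-sufficient-prop} delivers a principal eigenfunction $\phi\in\mathcal{X}_p^{++}$ with $-\phi_t+\mathcal{K}\phi-\phi+a^-\phi=\lambda_0(a^-)\phi$. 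Then
\begin{equation*}
(\epsilon\phi)_t-\mathcal{K}(\epsilon\phi)+\epsilon\phi-(\epsilon\phi)\bigl(a-b\,\epsilon\phi\bigr)=\epsilon\phi\bigl[-\lambda_0(a^-)+(a^--a)+b\,\epsilon\phi\bigr]\le 0
\end{equation*}
for $\epsilon>0$ small enough that $b\,\epsilon\phi<\lambda_0(a^-)$ and $\epsilon\phi\le M$. By Proposition~\ref{comparison-single-species-prop} the Poincar\'e iterates $P^n(\epsilon\phi(0,\cdot))$ increase and $P^n(M)$ decrease within $[\epsilon\phi(0,\cdot),M]$; their monotone pointwise limits $\underline{u}^*(0,\cdot)\le\overline{u}^*(0,\cdot)$ are continuous periodic fixed points of $P$ (Dini upgrades convergence to uniform on a period cell), and $T$-periodicity of the coefficients promotes them to $T$-periodic solutions $\underline{u}^*,\overline{u}^*\in\mathcal{X}_p^+\setminus\{0\}$.

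For uniqueness I would show $\underline{u}^*=\overline{u}^*$ by setting $w:=\overline{u}^*-\underline{u}^*\ge 0$ and computing
\begin{equation*}
w_t=\mathcal{K}w-w+\bigl[a-b(\overline{u}^*+\underline{u}^*)\bigr]w.
\end{equation*}
Because $\overline{u}^*$ is itself a bounded, bounded-away-from-zero periodic solution of the linear equation with coefficient $a-b\overline{u}^*$, Proposition~\ref{growth-rate-prop} forces $\lambda_0(a-b\overline{u}^*)=0$ (a strictly negative value would make $\overline{u}^*$ decay, a strictly positive one would make it grow). Strong positivity of the semigroup gives $\underline{u}^*\in X_p^{++}$, so $a-b(\overline{u}^*+\underline{u}^*)<a-b\overline{u}^*$ strictly on $\RR\times\RR^N$, and the monotonicity of the principal spectrum point in the zero-order coefficient (again via Proposition~\ref{growth-rate-prop}) yields $\lambda_0(a-b(\overline{u}^*+\underline{u}^*))<0$. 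Proposition~\ref{nonhomogeneous-prop} with $h\equiv 0$ then pins the unique bounded entire solution $w$ at $0$, so $\underline{u}^*\equiv\overline{u}^*=:u^*$.

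For global attraction, fix $u_0\in X_p^+\setminus\{0\}$. Strong monotonicity of the solution semigroup places $u(T,\cdot;u_0)$ in $X_p^{++}$, so there exist $\epsilon',M'>0$ with $\epsilon'\phi(0,\cdot)\le u(T,\cdot;u_0)\le M'$. Iterating $P$ and applying Proposition~\ref{comparison-single-species-prop} yields the sandwich
\begin{equation*}
u(nT+T,\cdot;\epsilon'\phi(0,\cdot))\le u(nT+T,\cdot;u_0)\le u(nT+T,\cdot;M'),
\end{equation*}
in which the outer sequences converge uniformly in $x$ to $u^*(0,\cdot)$ by the monotone-iteration step and the uniqueness just established. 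Continuous dependence on initial data together with the $T$-periodicity of $u^*$ upgrades this discrete-time convergence to $u(t,\cdot;u_0)-u^*(t,\cdot)\to 0$ uniformly in $x$ as $t\to\infty$.

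The main obstacle is the uniqueness step, which has to navigate two features peculiar to nonlocal dispersal. First, $\lambda_0(a)$ itself may fail to be a principal eigenvalue, so no positive periodic eigenfunction is at hand to scale $\overline{u}^*$ against $\underline{u}^*$; the sub-solution construction detours through the perturbed coefficient $a^-$ supplied by Proposition~\ref{PE-perturbation-prop}. Second, the Poincar\'e map $P$ lacks the compactness usually available in parabolic problems, so I cannot invoke a strong-positivity/sub-homogeneity fixed-point theorem; instead, monotonicity of the principal spectrum point in the zero-order coefficient plus the Fredholm-type Proposition~\ref{nonhomogeneous-prop} convert uniqueness into the vanishing of a bounded entire solution to a linear non-autonomous equation with strictly negative principal spectrum point.
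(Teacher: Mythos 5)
Your overall architecture (constant super-solution $M$, sub-solution $\epsilon\phi$ built from the perturbed coefficient $a^-$ supplied by Propositions \ref{PE-perturbation-prop} and \ref{PE-sufficient-prop}, monotone Poincar\'e iteration, then uniqueness via $\lambda_0(a-b\bar u^*)=0$, strict monotonicity of $\lambda_0$ in the zero-order coefficient, and Proposition \ref{nonhomogeneous-prop} with $h\equiv 0$) is a reasonable self-contained route, and it is genuinely different from the paper, which disposes of this proposition in one line by citing Theorem E of [RaSh]. The sub/super-solution computations, the argument that a positive periodic solution bounded away from zero forces $\lambda_0(a-b\bar u^*)=0$, and the discrete-to-continuous upgrade of the attraction are all fine as individual steps.

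The genuine gap is the sentence in which the monotone pointwise limits of $P^n(\epsilon\phi(0,\cdot))$ and $P^n(M)$ are declared to be \emph{continuous} fixed points of $P$, with Dini invoked to get uniform convergence. Dini's theorem presupposes that the limit is continuous; it cannot produce that continuity. Since the nonlocal Poincar\'e map has no smoothing or compactness (as you yourself note at the end), the decreasing limit is a priori only upper semicontinuous and the increasing limit only lower semicontinuous, and bounded discontinuous ``generalized'' fixed points are a real phenomenon for nonlocal dispersal equations; the integral representation of an entire periodic solution does not restore continuity because the exponential factor still contains the possibly discontinuous solution itself. Everything downstream depends on this unproved regularity: Proposition \ref{growth-rate-prop}, the monotonicity of $\lambda_0$, and Proposition \ref{nonhomogeneous-prop} are stated for coefficients in $\mathcal{X}_p$ and for solutions in $\mathcal{X}_p$, so applying them to $a-b\bar u^*$, to $a-b(\bar u^*+\underline u^*)$, and to $w=\bar u^*-\underline u^*$ is not justified until you know $\underline u^*,\bar u^*\in\mathcal{X}_p$. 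This regularity question is precisely the hard point that makes the nonlocal case nontrivial and is why the paper delegates the statement to [RaSh, Theorem E]. A standard repair is to reverse the logic: first prove $\underline u^*=\bar u^*$ pointwise by an argument that tolerates merely semicontinuous limits (e.g.\ the part-metric/strict subhomogeneity argument for the order-preserving, strictly subhomogeneous map $P$ on the order interval $[\epsilon\phi(0,\cdot),M]$), and only then conclude continuity from the fact that the common limit is simultaneously upper and lower semicontinuous, after which Dini legitimately yields uniform convergence. As written, the continuity claim is circular, and with it the uniqueness and stability steps are incomplete.
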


\begin{proof}
It follows from \cite[Theorem E]{RaSh}.
\end{proof}

\begin{definition}
\label{single-species-spreading-speed-interval-def}
 For a given
 vector $\xi\in S^{N-1}$, a real number $c^*_0(\xi;a,b)$ is said to be the {\rm spreading speed} of
 \eqref{main-single-eq} in the direction of $\xi$ if for any $u_0\in X^+$ satisfying that
 $$
  \liminf_{x\cdot\xi\to -\infty}u_0(x)>0\,\, \,{\rm and}\,\,\,
  u_0(x)=0\quad {\rm for \,\, all}\,\,x\in\RR^N\,\,{\rm such\,\, that}\,\, x\cdot\xi\gg 1,
 $$
 there holds
\begin{equation*}
\limsup_{t\to\infty}\sup_{x\cdot\xi \leq ct}|u(t,x;u_0)-u^*(t,x)|= 0\quad \forall\,\, c<c^*_0(\xi;a,b)
\end{equation*}
and
\begin{equation*}
  \limsup_{t\to\infty}\sup_{x\cdot\xi \geq ct}u(t,x;u_0)=0\quad \forall \,\, c>c^*_0(\xi;a,b).
\end{equation*}
\end{definition}

 \begin{proposition} [Existence of spreading speeds]
 \label{spreading-single-prop}
 Assume $\lambda_0(a)>0$. For any given $\xi\in S^{N-1}$,
 the spreading speed  $c^*_0(\xi;a,b)$ of \eqref{main-single-eq} in the direction of $\xi$ exists.
 Moreover,
 $$
 c^*_0(\xi;a,b)=\inf_{\mu>0}\frac{\lambda_0(\xi,\mu,a)}{\mu}.
 $$
 \end{proposition}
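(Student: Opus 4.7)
The plan is to prove the two halves of Definition \ref{single-species-spreading-speed-interval-def} with $c^* := \inf_{\mu > 0} \lambda_0(\xi, \mu, a)/\mu$. First I would verify that this infimum is finite and positive: since $\lambda_0(\xi, 0, a) = \lambda_0(a) > 0$ by assumption, and $\lambda_0(\xi, \mu, a) \to \infty$ as $\mu \to \infty$ (using the growth-rate formula in Proposition \ref{growth-rate-prop} together with the compact support of $\kappa$), the ratio blows up near $\mu = 0$ and $\mu = \infty$, so the infimum is attained at some $\mu^* > 0$ and is finite and positive.

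For the upper bound ($c > c^*$ gives no spreading), I would apply Proposition \ref{PE-perturbation-prop} to approximate $a$ from above by $a^+$ with $a \le a^+ \le a + \epsilon$ satisfying the vanishing condition of Proposition \ref{PE-sufficient-prop}, so that $\lambda_0(\xi, \mu, a^+)$ is the principal eigenvalue with a positive principal eigenfunction $\phi_\mu^+ \in \mathcal{X}_p^{++}$. Then
$$\bar U(t, x) = M\, e^{-\mu \left( x\cdot\xi - \lambda_0(\xi,\mu,a^+)\, t/\mu\right)}\,\phi_\mu^+(t, x)$$
solves the linearization at $0$ of \eqref{main-single-eq} with $a$ replaced by $a^+$; since $a \le a^+$ and $-b u^2 \le 0$, it is a super-solution of \eqref{main-single-eq}. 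Given $c > c^*$, pick $\mu$ and $\epsilon$ small enough that $\lambda_0(\xi, \mu, a^+)/\mu < c$. For $u_0 \in X^+$ vanishing on $\{x \cdot \xi \gg 1\}$, choose $M$ large enough that $\bar U(0, \cdot) \ge u_0$ on $\RR^N$; Proposition \ref{comparison-single-species-prop}(1) then gives $u(t, x; u_0) \le \bar U(t, x)$, and the exponential decay of $\bar U$ on $\{x \cdot \xi \ge ct\}$ yields the required vanishing.

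For the lower bound ($c < c^*$ gives spreading), I would use the classical two-eigenvalue sub-solution. Approximate $a$ from below by $a^-$ via Proposition \ref{PE-perturbation-prop} with positive principal eigenfunctions $\phi_{\mu}^-,\, \phi_{\mu + \delta}^- \in \mathcal{X}_p^{++}$, and set
$$\underline u(t, x) = \eta \Bigl[\phi_\mu^-(t, x)\, e^{-\mu(x\cdot\xi - ct)} - K\, \phi_{\mu + \delta}^-(t, x)\, e^{-(\mu + \delta)(x\cdot\xi - ct)}\Bigr]_+.$$
For $\eta$ small and $K$ large, the quadratic term $-b u^2$ of \eqref{main-single-eq} absorbs the error from using $a$ rather than its linear part, and one checks $\underline u$ is a sub-solution where positive. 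Given $u_0$ satisfying $\liminf_{x \cdot \xi \to -\infty} u_0 > 0$, one can translate $\underline u$ in the $\xi$-direction so that $\underline u(0, \cdot) \le u_0$, and then the comparison principle yields $u(t, x; u_0) \ge \underline u(t, x)$. This forces a uniform positive lower bound on $\{x \cdot \xi \le c't\}$ for any $c' < c$. Sandwiching $u(t, \cdot; u_0)$ between a positive $X_p$-periodic function and $u^*$ and invoking the global attractivity in Proposition \ref{single-species-periodic-solution-prop} (with the help of the periodicity in $x$) then upgrades this to uniform convergence $u(t, x; u_0) - u^*(t, x) \to 0$ on $\{x \cdot \xi \le ct\}$.

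The hard part is the lower bound. Two subtle points need care: first, the sub-solution construction demands genuine principal \emph{eigenfunctions}, not just spectrum points, forcing us to work with $a^-$ and then ensure $\lambda_0(\xi, \mu, a^-)/\mu$ can be made arbitrarily close to $c^*$ uniformly in $\mu$ near $\mu^*$; second, passing from a pointwise positive lower bound on $\{x \cdot \xi \le c't\}$ to uniform convergence to the periodic attractor $u^*$ is not automatic in the nonlocal setting, and requires either a uniform Harnack-type estimate for \eqref{main-single-eq} or a careful compactness argument using the periodicity in $x$ combined with Proposition \ref{single-species-periodic-solution-prop}.
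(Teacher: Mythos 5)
Your upper-bound half follows the standard route and is essentially sound (one technicality: Proposition \ref{comparison-single-species-prop}(1) is stated for \emph{bounded} sub/super-solutions, while $Me^{-\mu(x\cdot\xi-ct)}\phi^+_\mu$ is unbounded as $x\cdot\xi\to-\infty$; this is fixable). Note also that the paper itself does not prove this proposition directly but quotes \cite[Theorem 4.1]{RaShZh}, so the relevant comparison is with the standard direct argument of that reference. The genuine problem is in your lower-bound half, and it is not one of the two subtleties you flag: the comparison step itself fails. Your sub-solution $\underline u=\eta\bigl[\phi^-_\mu e^{-\mu s}-K\phi^-_{\mu+\delta}e^{-(\mu+\delta)s}\bigr]_+$, $s=x\cdot\xi-ct$, is positive precisely on a right half-space $\{s>s_K\}$ and decays only exponentially as $s\to+\infty$. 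The admissible initial data in Definition \ref{single-species-spreading-speed-interval-def} vanish identically for $x\cdot\xi\gg 1$, and because $\kappa$ is compactly supported the solution of \eqref{main-single-eq} has, at every finite time, tails in the direction $\xi$ that are lighter than any exponential (the $k$-th iterate $\mathcal{K}^k u_0$ reaches only $kr_0$ beyond the support of $u_0$, giving Poisson-type tails). Hence no translate of $\underline u$ lies below $u_0$, and no amount of waiting produces a time $t_0$ with $\underline u(t_0,\cdot)\le u(t_0,\cdot;u_0)$ on all of $\RR^N$; and for a nonlocal operator you cannot invoke comparison on a sub-domain without controlling the solution on its complement. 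So "translate $\underline u$ so that $\underline u(0,\cdot)\le u_0$" is impossible, and the whole lower bound collapses as written. The repair requires sub-solutions that are compactly supported in the $\xi$-direction (truncated/oscillatory profiles or principal eigenfunctions of suitably cut-off problems, built with the help of Proposition \ref{PE-perturbation-prop}), or avoiding pointwise sub-solutions altogether as in \cite{RaShZh}.

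Two further points, secondary but real. First, even on its positivity set your ansatz is not automatically a sub-solution: writing $\alpha_i=\lambda_0(\xi,\mu_i,a^-)-\mu_i c$ for $\mu_1=\mu$, $\mu_2=\mu+\delta$, the required inequality near the boundary of $\{\underline u>0\}$ forces $\alpha_2\le\alpha_1$, i.e. the secant-slope condition $\bigl(\lambda_0(\xi,\mu+\delta,a^-)-\lambda_0(\xi,\mu,a^-)\bigr)/\delta<c$; this fails for $\mu$ near the minimizer $\mu^*$ (where the slope equals $c^*>c$), so "for $\eta$ small and $K$ large one checks" hides a genuine constraint on the choice of $\mu$, and even with the right choice the construction only reaches speeds $c$ in a left neighborhood of $c^*$ (one must then add the observation that spreading at speed $c$ implies spreading at all smaller speeds). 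Second, the upgrade from a positive lower bound on $\{x\cdot\xi\le c't\}$ to uniform convergence to $u^*$, which you only gesture at, is indeed a separate nontrivial argument in the nonlocal setting; it is exactly the kind of work the paper carries out for the system in Lemma \ref{spreading-lm4}, using uniform-in-translation continuity and the global attractivity of Proposition \ref{single-species-periodic-solution-prop}, and it would have to be done (not merely invoked) for the scalar equation as well.
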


\begin{proof}
It follows from \cite[Theorem 4.1]{RaShZh}.
\end{proof}

\begin{remark}
\label{single-species-rk2} Observe that $c_0^*(\xi;a,b)$ is
independent of $b$ and we may then put
\begin{equation}
\label{single-species-speed-eq}
c_0^*(\xi,a)=c_0^*(\xi;a,b).
\end{equation}
Suppose that $a,\tilde a\in X_p$ satisfy $\lambda_0(a)>0$,
$\lambda_0(\tilde a)>0$, $\tilde a(t,x)\ge a(t,x)$ for all
$t\in\RR$, $x\in\RR^N$, and $\tilde a(t,x)\not\equiv a(t,x)$. Then
$$
c_0^*(\xi,\tilde a)>c_0^*(\xi,a).
$$
\end{remark}

\subsection{Basic properties of two species competition systems with nonlocal dispersal}

In this subsection, we present some basic properties for two species competition systems with nonlocal dispersal.

Consider \eqref{main-eq}. Let $X$ be as in \eqref{x-space}.  By general semigroup theory (see \cite{Paz}), for any $(u_0,v_0)\in X\times X$, there is a unique (local) solution $(u(t,x),v(t,x))$ of \eqref{main-eq}
with $(u(0,x),v(0,x))=(u_0(x),v_0(x))$. Throughout this subsection,  $(u(t,x;u_0,v_0),v(t,x;u_0,v_0))$ denotes
 the solution of \eqref{main-eq} with $(u(0,\cdot;u_0,v_0),v(0,\cdot;u_0,v_0))=(u_0(\cdot),v_0(\cdot))\in X\times X$,
 unless otherwise specified.


\begin{proposition}[Semitrivial solutions]
\label{semitrivial-solution-prop}
$\quad$
\begin{itemize}
\item[(1)] If $\lambda_0(a_1)>0$, then \eqref{main-eq} has a semi-trivial periodic solution $(u^*(t,x),0)$ and for
any $u_0\in X_p^+\setminus\{0\}$, $(u(t,x;u_0,0),v(t,x;u_0,0))-(u^*(t,x),0)\to 0$ as $t\to\infty$ uniformly in $x$. If,
in addition, $\lambda_0(a_2-b_2u^*)<0$, then $(u^*(t,x),0)$ is locally stable with respect to the perturbation in
$X_p\times X_p$, and if $\lambda_0(a_2-b_2u^*)>0$, then $(u^*(t,x),0)$
is unstable with respect to perturbation in $X_p\times X_p$.

\item[(2)]  If $\lambda_0(a_2)>0$, then \eqref{main-eq} has a semi-trivial periodic solution $(0,v^*(t,x))$ and for
any $v_0\in X_p^+\setminus\{0\}$, $(u(t,x;0,v_0),v(t,x;0,v_0))-(0,v^*(t,x))\to 0$ as $t\to\infty$ uniformly in $x$. If, in addition,
$\lambda_0(a_1-c_1v^*)<0$, then $(0,v^*(t,x))$ is locally stable  with respect to perturbation in $X_p\times X_p$,
 and if $\lambda_0(a_1-c_1v^*)>0$, then $(0,v^*(t,x))$ is unstable  with respect to perturbation in $X_p\times X_p$.

\item[(3)] If $\lambda_0(a_i)>0$ for $i=1,2$ and $a_{1L}> \frac{c_{1M}a_{2M}}{c_{2L}}$,
$a_{2M}\le \frac{a_{1L}b_{2L}}{b_{1M}}$,  then $(u^*(t,x),0)$ is globally stable and $(0,v^*(t,x))$ is unstable with respect to
perturbations in $X_p^+\times X_p^+$, where
$a_{kL}=\inf_{t\in\RR,x\in\RR^N}a_k(t,x)$, $a_{kM}=\sup_{t\in\RR,x\in\RR^N}a_k(t,x)$, and $b_{kL}$, $b_{kM}$, $c_{kL}$,
$c_{kM}$ $(k=1,2)$ are defined similarly.
\end{itemize}
\end{proposition}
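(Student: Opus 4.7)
Parts (1) and (2) I would handle by reducing \eqref{main-eq} to the single-species sub-systems \eqref{main-sub-eq1} and \eqref{main-sub-eq2}. Since $v\equiv 0$ is invariant for the first component and $\lambda_0(a_1)>0$, Proposition~\ref{single-species-periodic-solution-prop} supplies the unique positive time-periodic solution $u^*$ and the convergence $u(t,\cdot;u_0,0)\to u^*$ for every $u_0\in X_p^+\setminus\{0\}$. For the linear (in)stability claim, I would linearize \eqref{main-eq} about $(u^*,0)$: writing perturbations as $(U,V)$ one obtains the block upper-triangular system
\begin{equation*}
U_t=\mathcal{K}U-U+(a_1-2b_1u^*)U-c_1u^* V,\qquad V_t=\mathcal{K}V-V+(a_2-b_2u^*)V,
\end{equation*}
whose growth bound, by Proposition~\ref{growth-rate-prop}, is $\max\{\lambda_0(a_1-2b_1u^*),\lambda_0(a_2-b_2u^*)\}$. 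Since $u^*$ is a positive eigenfunction for $-\p_t+\mathcal{K}-I+(a_1-b_1u^*)I$ at eigenvalue $0$, strict monotonicity of $\lambda_0$ in the zeroth-order coefficient (a consequence of Proposition~\ref{growth-rate-prop} and the strong monotonicity of $\Phi(t,s;\xi,0,a)$) gives $\lambda_0(a_1-2b_1u^*)<0$, so local stability of $(u^*,0)$ is governed exclusively by the sign of $\lambda_0(a_2-b_2u^*)$. Part (2) is symmetric.

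Part (3) reduces to showing that (HB2)$^{'}$ implies (HB2). I would first pin down $v^*$ by the weak maximum principle applied to \eqref{main-sub-eq2} at space-time extrema, obtaining $a_{2L}/c_{2M}\le v^*\le a_{2M}/c_{2L}$, and likewise $a_{1L}/b_{1M}\le u^*\le a_{1M}/b_{1L}$. The first chain together with $a_{1L}>c_{1M}a_{2M}/c_{2L}$ gives $a_1-c_1v^*\ge a_{1L}-c_{1M}a_{2M}/c_{2L}>0$, and hence $\lambda_0(a_1-c_1v^*)>0$; this is the instability of $(0,v^*)$. Similarly, $a_2-b_2u^*\le a_{2M}-b_{2L}a_{1L}/b_{1M}\le 0$, yielding $\lambda_0(a_2-b_2u^*)\le 0$ (with strictness in the typical case via the monotonicity argument above).

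For global attraction of $(u^*,0)$ from $(u_0,v_0)\in X_p^+\times X_p^+$ with $u_0\ne 0$, I would run an iterated comparison scheme. Coordinate-wise comparison via Proposition~\ref{comparison-single-species-prop} with \eqref{main-sub-eq1} and \eqref{main-sub-eq2} yields $\limsup_{t\to\infty}u(t,\cdot)\le u^*$ and $\limsup_{t\to\infty}v(t,\cdot)\le v^*$ uniformly in $x$. Fixing small $\epsilon>0$ and $t$ large enough that $v(t,\cdot)\le v^*+\epsilon$, the $u$-equation satisfies $u_t\ge\mathcal{K}u-u+u(a_1-c_1(v^*+\epsilon)-b_1u)$; under (HB2)$^{'}$ the coefficient $a_1-c_1(v^*+\epsilon)$ has positive principal spectrum for small $\epsilon$, so Proposition~\ref{single-species-periodic-solution-prop} provides a positive periodic solution $u^-_\epsilon$ of this modified logistic problem and gives $\liminf u(t,\cdot)\ge u^-_\epsilon$, with $u^-_\epsilon\to u^*$ as $\epsilon\to 0$. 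Feeding this back into the $v$-equation, $v_t\le\mathcal{K}v-v+v(a_2-b_2u^-_\epsilon)$, and since $u^-_\epsilon\to u^*$ and $a_{2M}\le a_{1L}b_{2L}/b_{1M}$, the effective growth rate tends to one with nonpositive principal spectrum. Alternating refinements produce monotone super-/sub-periodic sequences squeezing $v$ down to $0$ and $u$ up to $u^*$.

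The main obstacle is closing this iteration: the bound $\lambda_0(a_2-b_2u^*)\le 0$ coming from (HB2)$^{'}$ is not automatically strict, so pure exponential decay of $v$ need not hold in one step. The clean remedy is to pass to the cooperative formulation \eqref{main-eq1} and invoke monotone dynamical systems theory on the order-interval $[(0,0),(u^*,v^*)]$: $\tilde E_2=(u^*,v^*)$ is then the maximal equilibrium, and one only has to exclude a second positive equilibrium $(\bar u,\bar v)$ between $\tilde E_0$ and $\tilde E_2$. Evaluating the time-averages of its defining equations at space-time extrema of $\bar v$ and using $a_{1L}>c_{1M}a_{2M}/c_{2L}$ together with $a_{2M}\le a_{1L}b_{2L}/b_{1M}$ produces the required contradiction, after which the monotone squeezing concludes global attraction of $\tilde E_2$, equivalently of $(u^*,0)$ in the original variables.
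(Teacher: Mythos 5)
Parts (1) and (2) of your proposal are essentially the paper's own argument: Proposition \ref{single-species-periodic-solution-prop} produces $(u^*,0)$ and the convergence, and the block upper-triangular linearization (the paper's \eqref{main-linear-eq1}) with Poincar\'e map of spectral radius $e^{T\max\{\lambda_0(a_1-2b_1u^*),\lambda_0(a_2-b_2u^*)\}}$ settles the (in)stability; your monotonicity argument for $\lambda_0(a_1-2b_1u^*)<0$ is the same fact the paper quotes. The instability half of (3) is also fine: the extremum bounds $a_{2L}/c_{2M}\le v^*\le a_{2M}/c_{2L}$ and $a_{1L}/b_{1M}\le u^*\le a_{1M}/b_{1L}$ do give $\lambda_0(a_1-c_1v^*)>0$.

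The gap is in the global-stability half of (3). First, a concrete error in your iteration: the lower solutions $u^-_\epsilon$ are the positive periodic solutions of the logistic equation with growth rate $a_1-c_1(v^*+\epsilon)$, and as $\epsilon\to 0$ they converge to the positive periodic solution with growth $a_1-c_1v^*$, which lies strictly \emph{below} $u^*$; so the claim $u^-_\epsilon\to u^*$ is false, and the scheme fails for this reason in addition to the non-strictness of $\lambda_0(a_2-b_2u^*)\le 0$ that you acknowledge. Second, the remedy you invoke -- monotone dynamical systems theory on the order interval $[(0,0),(u^*,v^*)]$ of \eqref{main-eq1} together with exclusion of intermediate fixed points -- is asserted, not proved, and its standard hypotheses fail in this setting: the period map of a nonlocal-dispersal system has no smoothing or compactness (precisely the difficulty the paper emphasizes), so the Dancer--Hess type trichotomy and the convergence of monotone iterates to an equilibrium are not available off the shelf; the strict subequilibrium near $(0,0)$ that you would iterate from requires a positive principal eigenfunction, which the nonlocal operator need not possess (only a principal spectrum point exists in general, and one must go through the approximation in Proposition \ref{PE-perturbation-prop}); and arbitrary initial data in $X_p^+\times X_p^+$ need not lie in the order interval, so an absorption step is also missing. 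These omissions are the actual content of (3). The paper sidesteps all of this: by the two-species comparison principle (\cite[Proposition 3.1]{HeNgSh}) it sandwiches the solution against the constant-coefficient system \eqref{simple-competition-eq} built from $a_{1L},b_{1M},c_{1M}$ and $a_{2M},b_{2L},c_{2L}$, for which $a_{1L}>c_{1M}a_{2M}/c_{2L}$ and $a_{2M}\le a_{1L}b_{2L}/b_{1M}$ are exactly the classical exclusion conditions, so $v\le v_-\to 0$ uniformly, and then $u\to u^*$ follows from Proposition \ref{single-species-periodic-solution-prop}. If you want to keep your route, you must supply the convergence of the monotone iterates (pointwise limit, continuity of the limit, and identification of the limit as a fixed point) without compactness, not merely cite monotone systems theory.
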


\begin{proof}
(1) Assume $\lambda_0(a_1)>0$. Then by Proposition \ref{single-species-periodic-solution-prop}, \eqref{main-eq} has a semi-trivial periodic solution $(u^*(t,x),0)$ and for
any $u_0\in X_p^+\setminus\{0\}$, $(u(t,x;u_0,0),v(t,x;u_0,0))-(u^*(t,x),0)\to 0$ as $t\to\infty$ uniformly in $x$.
Moreover,
$$
\lambda_0(a_1-2b_1 u^*)<0.
$$

Consider the linearization of \eqref{main-eq} at $(u^*(\cdot,\cdot),0)$,
\begin{equation}
\label{main-linear-eq1}
\begin{cases}
u_t=\int_{\RR^N}\kappa(y-x)u(t,y)dy-u(t,x)+(a_1(t,x)-2b_1(t,x)u^*(t,x))u(t,x)\cr
\qquad \qquad \qquad\qquad \qquad\qquad\qquad \qquad\quad -c_1(t,x)u^*(t,x)v(t,x),\quad\qquad \qquad x\in\RR^N\cr
v_t=\int_{\RR^N}\kappa(y-x)v(t,y)dy-v(t,x)+(a_2(t,x)-b_2(t,x)u^*(t,x))v(t,x),\quad x\in\RR^N.
\end{cases}
\end{equation}
Let $P$ be the Poincar\'e map or time $T$ map of \eqref{main-linear-eq1} on $X_p\times X_p$. Observe that
$$
r(P)=e^{T\max\{\lambda_0(a_1-2b_1u^*),\lambda_0(a_2-b_2u^*)\}},
$$
where $r(P)$ is the spectral radius of $P$.
  Since $\lambda_0(a_1-2b_1 u^*)<0$, if
$\lambda_0(a_2-b_2u^*)<0$, then $r(P)<1$ and hence $(u^*,0)$ is locally stable with respect to perturbations in
$X_p\times X_p$. If $\lambda_0(a_2-b_2 u^*)>0$,
then $r(P)>1$ and hence $(u^*,0)$ is unstable with respect to perturbations in
$X_p\times X_p$.

(2) It can be proved by the same arguments as in (1).

(3) Consider
\begin{equation}
\label{simple-competition-eq}
\begin{cases}
u_t=\int_{\RR^N}\kappa(y-x)u(t,y)dy-u(t,x)+u(a_{1L}-b_{1M}u-c_{1M}v),\quad & x\in\RR^N\cr
v_t=\int_{\RR^N}\kappa(y-x)v(t,y)dy-v(t,x)+v(a_{2M}-b_{2L}u-c_{2L}v),\quad & x\in\RR^N.
\end{cases}
\end{equation}
Then $(u_-^*,0)=(\frac{a_{1L}}{b_{1M}},0)$ and $(0,v_-^*)=(0,\frac{a_{2M}}{c_{2L}})$ are
two semitrivial solutions of \eqref{simple-competition-eq}. By $\lambda_0(a_2)>0$, $a_{2M}>0$. Then
by $a_{1L}>\frac{c_{1M}a_{2M}}{c_{2L}}$ and $a_{2M}\le \frac{a_{1L}b_{2L}}{b_{1M}}$,
$(0,v_-^*)\in X_p^+\times X_p^+$ and is unstable with respect to the perturbations in
$X_p\times X_p$  and $(u_-^*,0)\in X_p^+\times X_p^+$ and is globally stable with respect to the perturbations in
$(X_p^+\setminus\{0\})\times X_p^+$.
For given $u_0\in X_p^+\setminus\{0\}$ and $v_0\in X_p^+$, let $(u_-(t,x;u_0,v_0),v_-(t,x;u_0,v_0))$ be the solution of \eqref{simple-competition-eq}
with $(u_-(0,x;u_0,v_0)$, $v_-(0,x;u_0,v_0))=(u_0(x),v_0(x))$. By comparison principle for two species competition systems
with nonlocal dispersal (see \cite[Proposition 3.1]{HeNgSh}),
$$
u(t,x;u_0,v_0)\ge u_-(t,x;u_0,v_0),\quad v(t,x;u_0,v_0)\le v_-(t,x;u_0,v_0).
$$
It then follows that
$$
\lim_{t\to\infty} v(t,x;u_0,v_0)=0
$$
uniformly in $x\in\RR^N$ and then by  Proposition \ref{single-species-periodic-solution-prop},
$$
\lim_{t\to\infty}[u(t,x;u_0,u_0)-u^*(t,x)]=0
$$
uniformly in $x\in\RR^N$. (3) thus follows.
\end{proof}


\section{Spreading Speeds and the Proofs of Theorems \ref{spreading-speed-thm1} and \ref{spreading-speed-thm1}}

In this section, we investigate the spreading speeds of the cooperative system \eqref{main-eq1}
and prove Theorems \ref{spreading-speed-thm1} and \ref{spreading-speed-thm2}.

To do so,  we first prove some lemmas.
In the rest of this section, we always assume that $\xi\in S^{N-1}$ is given and fixed.
Let
$$
\begin{cases}
f(t,x,u,v)=u\Big(a_1(t,x)-b_1(t,x)u-c_1(t,x)(v^*(t,x)-v)\Big)\cr
g(t,x,u,v)=b_2(t,x)\Big(v^*(t,x)-v\Big)u+v\Big(a_2(t,x)-2c_2(t,x)v^*(t,x)+c_2(t,x)v\Big).
\end{cases}
$$
Let $(u(t,x;u_0,v_0,z),v(t,x;u_0,v_0,z))$ be the solution of
\begin{equation}
\label{main-eq2}
\begin{cases}
u_t=\mathcal{K}u-u+f(t,x+z,u,v)\cr
v_t=\mathcal{K}v-v+g(t,x+z,u,v)
\end{cases}
\end{equation}
with $(u(0,\cdot;u_0,v_0,z),v(0,\cdot;u_0,v_0,z))=(u_0(\cdot),v_0(\cdot))\in X\times X$.
Note that $(u(t,x;u_0,v_0,0)$, $v(t,x;u_0,v_0,0))=(u(t,x;u_0,v_0),v(t,x;u_0,v_0))$.

\begin{lemma}
\label{spreading-lm1}
For given $(u_1,v_1)$, $(u_2,v_2)\in X^+\times X^+$, if $0\le u_1\le u_2\le u^*(0,\cdot)$ and $0\le v_1\le v_2\le v^*(0,\cdot)$, then
$$
0\le u(t,\cdot;u_1,v_1)\le u(t,\cdot;u_2,v_2)\le u^*(t,x),\quad 0\le v(t,\cdot;u_1,v_1)\le v(t,\cdot;u_2,v_2)\le v^*(t,x).
$$
\end{lemma}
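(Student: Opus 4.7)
The lemma asserts monotone dependence of solutions of the cooperative system \eqref{main-eq1} on the initial data, together with the envelope bounds $(0,0)\le(u,v)\le(u^*,v^*)$. My plan is to read this off from the cooperative structure of \eqref{main-eq1} on the order interval $\mathcal{R}=\{(u,v)\in X^+\times X^+:\,v\le v^*(t,\cdot)\}$, combined with a comparison principle for nonlocal-dispersal systems with quasi-monotone nonlinearity.

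First I would record three preparatory facts. (i) Direct substitution shows that $(u^*(t,\cdot),v^*(t,\cdot))$ solves \eqref{main-eq1}: in the first equation the factor $v^*-v$ vanishes and one recovers \eqref{main-sub-eq1}, while in the second equation $b_2(v^*-v^*)u^*=0$ and $-2c_2v^*+c_2v^*=-c_2v^*$ reduce it to \eqref{main-sub-eq2}. (ii) $(0,0)$ is trivially a solution of \eqref{main-eq1}. (iii) The off-diagonal partial derivatives of the nonlinearity are $\partial_v f=c_1(t,x)u$ and $\partial_u g=b_2(t,x)(v^*(t,x)-v)$, both nonnegative on $\mathcal{R}$; this is exactly the quasi-monotonicity condition.

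Next I would apply a cooperative comparison principle for \eqref{main-eq1}: if $(U_1(0),V_1(0))\le(U_2(0),V_2(0))$ with both pairs in $[0,u^*(0,\cdot)]\times[0,v^*(0,\cdot)]$, then the corresponding solutions remain ordered as long as they stay inside $\mathcal{R}$. The proof parallels Proposition 2.6: setting $W=U_2-U_1$, $Z=V_2-V_1$ yields a linear system of the form $W_t=\mathcal{K}W-W+\alpha W+\beta Z$, $Z_t=\mathcal{K}Z-Z+\gamma W+\delta Z$ with $\beta,\gamma\ge 0$ by (iii); shifting by a large constant $M$ so that $\alpha+M,\delta+M\ge 0$ and iterating the resulting integral equation lets one use the positivity-preserving property of $e^{t(\mathcal{K}-I)}$ to conclude $(W(t),Z(t))\ge 0$. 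Applying this along the chain $(0,0)\le(u_1,v_1)\le(u_2,v_2)\le(u^*(0,\cdot),v^*(0,\cdot))$ delivers the full sandwich.

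The main subtlety is circular: the quasi-monotonicity in (iii) needs $v\le v^*$, which is itself part of what must be proved. I would resolve this by bootstrapping against the two extreme trajectories first. On a short time interval, continuity keeps the solutions close to $\mathcal{R}$, so the cooperative comparison with $(0,0)$ from below and with $(u^*,v^*)$ from above yields $0\le u\le u^*$ and $0\le v\le v^*$ there; by continuation and the global existence supplied by the semigroup framework already quoted in the paper, these envelope bounds extend to all $t\ge 0$, making $\mathcal{R}$ genuinely invariant. With $\mathcal{R}$ now invariant, the middle inequality $(u(\cdot;u_1,v_1),v(\cdot;u_1,v_1))\le(u(\cdot;u_2,v_2),v(\cdot;u_2,v_2))$ follows from a direct application of the cooperative comparison proved above.
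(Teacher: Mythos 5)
Your proposal takes essentially the same route as the paper: the paper's proof likewise observes that $(0,0)$ and $(u^*,v^*)$ are solutions of \eqref{main-eq1}, notes the cooperativity $f_v\ge 0$ for $u\ge 0$ and $g_u\ge 0$ for $0\le v\le v^*$, and then invokes the comparison principle for cooperative systems. Your additional bootstrapping discussion only makes explicit the region-invariance that the paper delegates to that cited comparison principle; just note that, to be rigorous, the step should be run via a first-exit-time or an $\epsilon e^{Kt}$-perturbed supersolution (or a truncation of $g$) rather than the phrase that solutions stay close to $\mathcal{R}$, since quasi-monotonicity fails, even slightly, once $v$ exceeds $v^*$.
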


\begin{proof}
Observe that $(0,0)$ and $(u^*(t,x),v^*(t,x))$ are solutions of \eqref{main-eq1}, and $f_v(t,x,u,v)\ge 0$  for all $u\ge 0$ and
$g_u(t,x,u,v)\ge 0$ for $u\ge 0$ and $0\le v\le v^*(t,x)$.  The lemma then follows from
comparison principle for cooperative systems.
\end{proof}

\begin{lemma}
\label{spreading-lm2}
Let $(u_n,v_n)\in X^+\times X^+$ $(n=1,2,\cdots)$ and $(u_0,v_0)\in X^+\times X^+$ be given. Assume that
$u_n(\cdot)\le u^*(0,\cdot)$ and $v_n(\cdot)\le v^*(0,\cdot)$ for $n=1,2,\cdots$.
\begin{itemize}
\item[(1)]
If $(u_n,v_n)\to (u_0,v_0)$ in  compact open  topology, then
$$
(u(t,x;u_n,v_n,z),v(t,x;u_n,v_n,z))\to (u(t,x;u_0,v_0,z),v(t,x;u_0,v_0,z))
$$
as $n\to\infty$ uniformly in $z\in\RR^N$ and $(t,x)$ in bounded subsets of $\RR^+\times\RR^N$.

\item[(2)] For given $\xi\in S^{N-1}$, if $(u_n,v_n)\to (u_0,v_0)$ uniformly in bounded strips
of the form $E_K=\{x\in\RR^N\,|\, |x\cdot\xi|\le K\}$ $(K\ge 0)$, then
$$
(u(t,x;u_n,v_n,z),v(t,x;u_n,v_n,z))\to (u(t,x;u_0,v_0,z),v(t,x;u_0,v_0,z))
$$
as $n\to\infty$ uniformly in $z\in\RR^N$, $t$ in bounded subsets of $\RR^+$,  and $x$ in bounded strips
of the form $E_K=\{x\in\RR^N\,|\, |x\cdot\xi|\le K\}$ $(K\ge 0)$.
\end{itemize}
\end{lemma}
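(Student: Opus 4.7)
The idea is to exploit the compactly supported kernel $\kappa$, which gives an effective finite speed of propagation for the linearized error equation, combined with a uniform Lipschitz bound coming from boundedness of the solutions. First, by the analogue of Lemma \ref{spreading-lm1} for \eqref{main-eq2} together with a Gronwall-type argument in $\sup_x$, the solutions $u(t,\cdot;u_n,v_n,z)$ and $v(t,\cdot;u_n,v_n,z)$ are uniformly bounded on $[0,T]$ by a constant depending only on $T$ and on the sup-norms of $u^*$, $v^*$, and the coefficients. Consequently $f(t,x+z,\cdot,\cdot)$ and $g(t,x+z,\cdot,\cdot)$ are Lipschitz in $(u,v)$ on the relevant bounded region with a Lipschitz constant $L$ that is independent of $n$ and $z$.

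Set
\[
\phi_n(t,x)=|u(t,x;u_n,v_n,z)-u(t,x;u_0,v_0,z)|+|v(t,x;u_n,v_n,z)-v(t,x;u_0,v_0,z)|.
\]
Writing both solutions in Duhamel form against the linear part $\mathcal{K}-I$ and using $|\mathcal{K}(\cdot)|\le\mathcal{K}|\cdot|$ together with the Lipschitz bound, one obtains
\[
\phi_n(t,x)\le e^{-t}\phi_n(0,x)+\int_0^t e^{-(t-s)}\bigl[(\mathcal{K}\phi_n)(s,x)+L\phi_n(s,x)\bigr]\,ds.
\]
Let $w_n$ solve $\partial_t w_n=\mathcal{K}w_n+(L-1)w_n$ with $w_n(0,\cdot)=\phi_n(0,\cdot)$. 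Subtracting the integral equation satisfied by $w_n$ from the above inequality, taking $\sup_x$ of the negative part $(w_n-\phi_n)^-$, and applying Gronwall yields $\phi_n\le w_n$ pointwise. At the same time, $w_n$ has the explicit representation
\[
w_n(t,x)=e^{(L-1)t}\sum_{k=0}^{\infty}\frac{t^k}{k!}\bigl(\mathcal{K}^k\phi_n(0,\cdot)\bigr)(x),
\]
where the kernel of $\mathcal{K}^k$ is the $k$-fold self-convolution $\kappa^{*k}$, a probability density supported in $\{y:\|y\|\le kr_0\}$.

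The final step is to split this series at a large index $K$. Since $0\le\phi_n(0,\cdot)\le 2(\|u^*\|_\infty+\|v^*\|_\infty)$, the tail $\sum_{k\ge K}\tfrac{t^k}{k!}(\mathcal{K}^k\phi_n(0,\cdot))(x)$ is bounded by $2(\|u^*\|_\infty+\|v^*\|_\infty)\sum_{k\ge K}t^k/k!$, which is smaller than any prescribed $\epsilon$ on $[0,T]$ for $K$ large, independently of $n$, $x$, and $z$. The head $\sum_{k<K}\tfrac{t^k}{k!}(\mathcal{K}^k\phi_n(0,\cdot))(x)$ depends on $\phi_n(0,\cdot)$ only through its values on $\{y:\|y-x\|\le(K-1)r_0\}$. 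In case (1), when $x$ ranges over a bounded set these values all lie in a fixed compact set on which compact-open convergence forces $\phi_n(0,\cdot)\to 0$ uniformly, so the head tends to $0$; in case (2), if $|x\cdot\xi|\le K_0$ then each relevant $y$ satisfies $|y\cdot\xi|\le K_0+(K-1)r_0$, so the hypothesized uniform convergence on strips forces the head to $0$ there. Uniformity in $z$ is automatic because $w_n$, $L$, and $\phi_n(0,\cdot)$ are all independent of $z$. The main technical subtlety is justifying $\phi_n\le w_n$ from an integral inequality (since $\phi_n$ is not classically differentiable where signs of differences change); the $\sup_x$-Gronwall argument indicated above handles this cleanly.
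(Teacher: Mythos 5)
Your argument is correct, but it proceeds by a genuinely different route than the paper. The paper works in an exponentially weighted space $Y_\rho$ (norm $\sup_x e^{-\rho|x|}(|u(x)|+|v(x)|)$, resp.\ $Y_{\rho,\xi}$ with weight $e^{-\rho|x\cdot\xi|}$ for part (2)), observes that $(\mathcal{K}-I,\mathcal{K}-I)$ is a bounded operator there, and runs Duhamel plus Gronwall in that norm; the point is that compact-open (resp.\ strip-uniform) convergence of the uniformly bounded initial data is exactly convergence in the weighted norm, and weighted-norm smallness gives uniform smallness on bounded sets (resp.\ strips). You instead exploit the compact support of $\kappa$: after the sup-Gronwall comparison $\phi_n\le w_n$ with $w_n(t,\cdot)=e^{(L-1)t}e^{t\mathcal{K}}\phi_n(0,\cdot)$, the exponential series for $e^{t\mathcal{K}}$ localizes, up to a uniformly small tail, the dependence of $w_n(t,x)$ on $\phi_n(0,\cdot)$ to the ball $\{\|y-x\|\le (K-1)r_0\}$, and the head is killed by the hypothesized convergence on compacta or strips. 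Your comparison step ($\psi=w_n-\phi_n$ satisfies $\psi(t,x)\ge\int_0^te^{-(t-s)}[(\mathcal{K}\psi)(s,x)+L\psi(s,x)]ds$, then Gronwall for $\sup_x\psi^-$) is sound, and uniformity in $z$ is indeed automatic since $w_n$ and $L$ do not depend on $z$. Two small housekeeping points: when you add the $u$- and $v$-components of the Duhamel inequalities the Lipschitz constant should be $2L$ (harmless, just rename), and the uniform bound on the solutions over $[0,T]$ (hence the uniform Lipschitz constant, uniform in $n$ and $z$) deserves a sentence — e.g.\ via the a priori bounds for the original competition system under the change of variables, since $u_n\le u^*(0,\cdot)$ need not imply $u_n\le u^*(0,\cdot+z)$; the paper glosses over the same point when asserting boundedness of its coefficients $a_n^k$, $b_n^k$. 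What your approach buys is an explicit finite-range estimate and no weighted function spaces; what the paper's buys is brevity and independence of the compact support of $\kappa$ (only an exponential moment of $\kappa$ is needed for $\mathcal{K}$ to be bounded on $Y_\rho$).
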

\begin{proof}
(1)
Let
$$
\begin{cases}
u_n(t,x;z)=u(t,x;u_n,v_n,z)-u(t,x;u_0,v_0,z)\cr
 v_n(t,x;z)=v(t,x;u_n,v_n,z)-v(t,x;u_0,v_0,z).
\end{cases}
$$
Then
$$
\begin{cases}
\p_t u_n(t,x;z)=\mathcal{K}u_n-u_n+a_n^1(t,x;z)u_n+b^1_n(t,x;z)v_n\cr
\p_t v_n(t,x;z)=\mathcal{K}v_n-v_n+a^2_n(t,x;z)u_n+b^2_n(t,x;z)v_n,
\end{cases}
$$
where $$a^1_n(t,x;z)=f_u(t,x+z,u_n^1(t,x;z),v_n^1(t,x;z)),
$$
$$b^1_n(t,x;z)=f_v(t,x+z,u_n^1(t,x;z),v_n^1(t,x;z)),
$$
$$a_n^2(t,x;z)=g_u(t,x+z,u_n^2(t,x;z),v_n^2(t,x;z)),
$$
and
$$b_n^2(t,x;z)=g_v(t,x+z,u_n^2(t,x;z),v_n^2(t,x;z))$$
for some $u_n^1(t,x;z)$, $u_n^2(t,x;z)$ between $u(t,x;u_n,v_n,z)$ and  $u(t,x;u_0,v_0,z)$, and some $v_n^1(t,x;z)$, $v_n^2(t,x;z)$ between $v(t,x;u_n,v_n,z)$ and
$v(t,x;u_0,v_0,z)$.

For given $\rho>0$, let
$$
Y_{\rho}=\{(u,v)\in C(\RR^n,\RR^2)\,|\, (e^{-\rho|\cdot|}u(\cdot),e^{-\rho|\cdot|}v(\cdot))\in X\times X\}
$$
equipped with the norm $\|(u,v)\|_{Y_\rho}=\sup_{x\in\RR^N} e^{-\rho|x|}(|u(x)| +|v(x)|)$.
Observe that
$$
(\mathcal{K}-I,\mathcal{K}-I):Y_\rho\to Y_\rho,
$$
$$
(\mathcal{K}-I,\mathcal{K}-I)(u,v)=(\mathcal{K}u-u,\mathcal{K}v-v),
$$
is a bounded linear operator, and $a_n^k(t,x;z)$ and $b_n^k(t,x;z)$ are bounded on $\RR^+\times\RR^N\times\RR^N$
($k=1,2$). Hence there are $M>0$ and $\omega>0$ such that
$$
\|e^{(\mathcal{K}-I,\mathcal{K}-I)t}\|_{Y_{\rho}}\le M e^{\omega t}
$$
and
$$
|a_n^k(t,x;z)|\le M,\quad |b_n^k(t,x;z)|\le M.
$$
Note that $(u_n(0,\cdot;z),v_n(0,\cdot;z))\in Y_{\rho}$ and
\begin{align*}
(u_n(t,\cdot;z),v_n(t,\cdot;z))&=e^{(\mathcal{K}-I,\mathcal{K}-I)t}(u_n(0,\cdot;z),v_n(0,\cdot;z))\\
& +\int_0^ t e^{(\mathcal{K}-I,\mathcal{K}-I)(t-s)}\Big[a_n^1(s,\cdot;z)u_n(s,\cdot;z)+b_n^1(s,\cdot;z)v_n(s,\cdot;z),\\
&\qquad \qquad\qquad
a_n^2(s,\cdot;z)u_n(s,\cdot;z)+b_n^2(s,\cdot;z)v_n(s,\cdot;z)\Big]ds.
\end{align*}
Hence
\begin{align*}
\|(u_n(t,\cdot;z),v_n(t,\cdot;z))\|_{Y_\rho}&\le M e^{\omega t}\|(u_n(0,\cdot;z),v_n(0,\cdot;z))\|_{Y_\rho}\\
&\quad
+M^2 \int_0^t e^{\omega(t-s)}\|(u_n(s,\cdot;z),v_n(s,\cdot;z))\|_{Y_\rho}ds.
\end{align*}
By Gronwall's inequality, we have
$$
\|(u_n(t,\cdot;z),v_n(t,\cdot;z))\|_{Y_\rho}\le e^{(\omega+M^2)t}M \|(u_n(0,\cdot;z),v_n(0,\cdot;z))|_{Y_\rho}.
$$
Note that
$$
\|(u_n(0,\cdot;z),v_n(0,\cdot;z))\|_{Y_\rho}\to 0
$$
as $n\to\infty$ uniformly in $z\in\RR^N$.
It then follows that
$$
(u_n(t,x;z),v_n(t,x;z))\to (0,0)
$$
as $n\to\infty$ uniformly in $z\in\RR^N$ and $(t,x)$ in bounded sets of $\RR^+\times\RR^N$.

(2) It can be proved by the similar arguments in (1) with $Y_\rho$ being replaced
by $Y_{\rho,\xi}$,
$$
Y_{\rho,\xi}=\{(u,v)\in C(\RR^N,\RR^2)\,|\, (e^{-\rho |\cdot \cdot\xi|}u(\cdot),e^{-\rho|\cdot\cdot\xi|}v(\cdot))\in X\times X\},
$$
equipped with the norm $\|(u,v)\|_{Y_{\rho,\xi}}=\sup_{x\in\RR^N} e^{-\rho|x\cdot\xi|}(|u(x)|+|v(x)|)$.
\end{proof}

\begin{lemma}
\label{spreading-lm3}
For given $u_0,v_0\in X^+$, if $\inf_{x\in\RR^N} u_0(x)>0$, $u_0(\cdot)\le u^*(0,\cdot)$, and $v_0(\cdot)\le v^*(0,\cdot)$,  then
$$
\lim_{t\to\infty} [|u(t,x;u_0,v_0)-u^*(t,x)|+|v(t,x;u_0,v_0)-v^*(t,x)|]=0
$$
uniformly in $x\in\RR^N$.
\end{lemma}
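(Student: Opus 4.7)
The plan is to sandwich the solution between the spatially constant periodic lower datum $(\alpha_0,0)$ and the upper datum $(u^*(0,\cdot),v^*(0,\cdot))$, then invoke the global stability assumption (HB2) on the resulting periodic lower solution.

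First, set $\alpha_0:=\inf_{x\in\RR^N}u_0(x)>0$. The hypotheses then give $0\le \alpha_0\le u_0(\cdot)\le u^*(0,\cdot)$ and $0\le 0\le v_0(\cdot)\le v^*(0,\cdot)$. Let $(u_\ell(t,x),v_\ell(t,x))$ denote the solution of the cooperative system \eqref{main-eq1} with initial datum $(\alpha_0,0)$. Since the datum is spatially constant (hence periodic), Proposition \ref{semitrivial-solution-prop} and the cooperative structure guarantee that $(u_\ell(t,\cdot),v_\ell(t,\cdot))\in X_p\times X_p$ for every $t\ge 0$. Applying Lemma \ref{spreading-lm1} twice, I obtain the sandwich
$$
u_\ell(t,x)\le u(t,x;u_0,v_0)\le u^*(t,x),\qquad v_\ell(t,x)\le v(t,x;u_0,v_0)\le v^*(t,x),
$$
valid for all $t\ge 0$ and $x\in\RR^N$.

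Second, I would show $(u_\ell,v_\ell)\to(u^*,v^*)$ uniformly in $x$ as $t\to\infty$. Under the change of variables \eqref{change-variable-eq}, the pair $(u_\ell,v_\ell)$ corresponds to the solution of the original competition system \eqref{main-eq} with initial datum $(u_\ell(0,\cdot),v^*(0,\cdot)-v_\ell(0,\cdot))=(\alpha_0,v^*(0,\cdot))\in X_p^+\times X_p^+$. Since $\alpha_0\neq 0$, assumption (HB2) applies and yields $u^{\text{orig}}(t,x)\to u^*(t,x)$ and $v^{\text{orig}}(t,x)\to 0$ uniformly in $x\in\RR^N$. Translating back to the cooperative variables, this gives $u_\ell(t,x)\to u^*(t,x)$ and $v_\ell(t,x)=v^*(t,x)-v^{\text{orig}}(t,x)\to v^*(t,x)$, uniformly in $x$.

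Finally, combining the sandwich with this uniform convergence of both bounds to $(u^*,v^*)$ forces
$$
|u(t,x;u_0,v_0)-u^*(t,x)|+|v(t,x;u_0,v_0)-v^*(t,x)|\to 0
$$
uniformly in $x\in\RR^N$, which is the claim. There is no substantial obstacle here: the only point requiring care is verifying that the spatially constant lower datum is legitimately comparable to the (merely uniformly continuous and bounded) datum $(u_0,v_0)$ via Lemma \ref{spreading-lm1}, and that the translated datum $(\alpha_0,v^*(0,\cdot))$ satisfies the nontriviality hypothesis of (HB2), both of which follow directly from $\alpha_0>0$ and the cooperative structure of \eqref{main-eq1} on the invariant region $\{0\le u,\ 0\le v\le v^*\}$.
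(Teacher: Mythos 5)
Your proposal is correct and follows essentially the same route as the paper: the paper likewise takes the constant lower datum $(u_0^{\inf},0)$ with $u_0^{\inf}=\inf_{x\in\RR^N}u_0(x)>0$, sandwiches $(u(t,x;u_0,v_0),v(t,x;u_0,v_0))$ between the solution starting at $(u_0^{\inf},0)$ and $(u^*,v^*)$ via Lemma \ref{spreading-lm1}, and then applies (HB2) to that spatially periodic solution. The only difference is that you spell out the change of variables \eqref{change-variable-eq} back to the original competition system before invoking (HB2), a step the paper leaves implicit.
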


\begin{proof}
Assume that $\inf_{x\in\RR^N}u_0(x)>0$. Let $u_{0}^{\inf}=\inf_{x\in\RR^N}u_0(x)$. By Lemma \ref{spreading-lm1},
$$
u^*(t,x)\ge u(t,x;u_0,v_0)\ge u(t,x; u_{0}^{\inf},0)
$$
and
$$
v^*(t,x)\ge v(t,x;u_0,v_0)\ge v(t,x; u_{0}^{\inf},0).
$$
Note that $(u_{0}^{\inf},0)\in (X_p^+\setminus \{0\})\times X_p^+$. By (HB2), we have
$$
\lim_{t\to\infty} [|u(t,x;u_{0}^{\inf},0)-u^*(t,x)|+|v(t,x;u_{0}^{\inf},0)-v^*(t,x)|]=0
$$
uniformly in $x\in\RR^N$
and then
$$
\lim_{t\to\infty} [|u(t,x;u_0,v_0)-u^*(t,x)|+|v(t,x;u_0,v_0)-v^*(t,x)|]=0
$$
uniformly in $x\in\RR^N$.
\end{proof}

\begin{lemma}
\label{spreading-lm4}
For given $c\in\RR$ and $(u_0,v_0)\in X_1^+(\xi)\times X_2^+(\xi)$,
if $\liminf_{x\cdot\xi\le ct, t\to\infty} u(t,x;u_0,v_0)>0$, then for any $c^{'}<c$,
$$
\limsup_{x\cdot\xi\le c^{'}t,t\to\infty}[|u(t,x;u_0,v_0)-u^*(t,x)|+|v(t,x;u_0,v_0)-v^*(t,x)|]=0.
$$
\end{lemma}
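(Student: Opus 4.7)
The plan is to argue by contradiction, combining a moving-frame translation with the sandwich structure afforded by Lemmas \ref{spreading-lm1}--\ref{spreading-lm3}. Suppose the conclusion fails: then there exist $\epsilon_0>0$ and a sequence $(t_n,x_n)$ with $t_n\to\infty$, $x_n\cdot\xi\le c't_n$, and
\[
|u(t_n,x_n;u_0,v_0)-u^*(t_n,x_n)|+|v(t_n,x_n;u_0,v_0)-v^*(t_n,x_n)|\ge\epsilon_0.
\]

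The first step is to upgrade Lemma \ref{spreading-lm3} to a statement uniform in the initial data, the starting time, and the spatial shift. Setting $\delta_0:=\min\{\delta,\inf_{(t,x)}u^*(t,x)\}>0$, where $\delta>0$ is the lower bound of $u(\cdot,\cdot;u_0,v_0)$ on $\{x\cdot\xi\le ct,\,t\ge T_1\}$ provided by the hypothesis, I use Lemma \ref{spreading-lm1} to sandwich any admissible datum (i.e.\ $\delta_0\le u_1\le u^*(s_0,\cdot+y_0)$ and $0\le v_1\le v^*(s_0,\cdot+y_0)$) between $(\delta_0,0)$ and the spatially translated periodic orbit, both of which Lemma \ref{spreading-lm3} carries into $(u^*(\cdot,\cdot+y_0),v^*(\cdot,\cdot+y_0))$. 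Combined with $T$-periodicity in $t$ and compactness of the spatial fundamental domain $\prod_i[0,p_i]$, this yields a single $\tau>0$ such that, for every starting time $s_0\in\RR$, every spatial shift $y_0\in\RR^N$, and every admissible datum, the resulting error at time $s_0+\tau$ is $<\epsilon_0/3$ uniformly in $x$.

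Fix such a $\tau$ and introduce the moving-frame solutions $\tilde u_n(s,y)=u(s+t_n-\tau,y+x_n;u_0,v_0)$, $\tilde v_n(s,y)=v(s+t_n-\tau,y+x_n;u_0,v_0)$. By periodicity of the coefficients they solve the system with $a_k(s+s_n,y+y_n)$, etc., where $(s_n,y_n)$ are the fundamental-domain representatives of $(t_n-\tau,x_n)$; passing to a subsequence, $s_n\to s_0$ and $y_n\to y_0$. The hypothesis $x_n\cdot\xi\le c't_n$ forces $\tilde u_n(0,y)\ge\delta_0$ on the half-space $\{y\cdot\xi\le R_n\}$ with $R_n=(c-c')t_n-c\tau\to+\infty$. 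I then pick $\underline u_n\in X^+$ continuous with $\underline u_n=\delta_0$ on $\{y\cdot\xi\le R_n-1\}$ and $\underline u_n=0$ on $\{y\cdot\xi\ge R_n\}$, so that $\underline u_n\le\tilde u_n(0,\cdot)$ pointwise. Lemma \ref{spreading-lm1} then produces the componentwise sandwich $\underline u_n^{\sharp}\le\tilde u_n\le u^*(\cdot+t_n-\tau,\cdot+x_n)$ on $[0,\tau]$, where $(\underline u_n^{\sharp},\underline v_n^{\sharp})$ is the solution of the $n$-shifted system started from $(\underline u_n,0)$, and analogously for $v$.

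To conclude, observe that $\underline u_n\to\delta_0$ uniformly on every strip $E_K=\{|y\cdot\xi|\le K\}$ while the coefficients $a_k(\cdot+s_n,\cdot+y_n)$, etc., converge uniformly on bounded sets. A variant of Lemma \ref{spreading-lm2}(2) allowing $n$-dependent coefficients (proved by the same Gronwall estimate in the exponentially weighted norm) gives $(\underline u_n^{\sharp}(\tau,0),\underline v_n^{\sharp}(\tau,0))\to(\bar u^{\sharp}(\tau,0),\bar v^{\sharp}(\tau,0))$, where $(\bar u^{\sharp},\bar v^{\sharp})$ solves the limit shifted system (coefficients $a_k(s+s_0,y+y_0)$, etc.) from the admissible datum $(\delta_0,0)$. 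The uniform statement from the previous paragraph, applied to $(\bar u^{\sharp},\bar v^{\sharp})$, yields $|\bar u^{\sharp}(\tau,0)-u^*(s_0+\tau,y_0)|+|\bar v^{\sharp}(\tau,0)-v^*(s_0+\tau,y_0)|<\epsilon_0/3$. Combining this bound with the sandwich and $u^*(t_n,x_n)\to u^*(s_0+\tau,y_0)$, $v^*(t_n,x_n)\to v^*(s_0+\tau,y_0)$, I get $|u^*(t_n,x_n)-\tilde u_n(\tau,0)|+|v^*(t_n,x_n)-\tilde v_n(\tau,0)|<2\epsilon_0/3$ for $n$ large, contradicting the $\ge\epsilon_0$ assumption. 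The main technical obstacle is the uniformity in the first paragraph: $\tau$ must be chosen before the subsequence extraction, otherwise a subsequence whose limit initial datum only marginally exceeds $\delta_0$ could require an arbitrarily long relaxation time, breaking the scheme.
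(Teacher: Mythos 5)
Your argument is sound in outline but follows a genuinely different route from the paper's. The paper proves the lemma directly, in two stages and only at the discrete times $nT$: from the hypothesis it gets $u\ge\sigma/2$ on $\{x\cdot\xi\le ct\}$, compares with truncations of the constant datum $(\sigma/2,0)$ translated by $y$ with $y\cdot\xi\le\tilde c nT$ (where $c'<\tilde c<c$), and combines Lemma \ref{spreading-lm2} (continuity in initial data on strips, uniform in the spatial shift $z$) with Lemma \ref{spreading-lm3} to get $u(nT,\cdot)\ge u^*(0,\cdot)-\delta$, $v(nT,\cdot)\ge v^*(0,\cdot)-\delta$ on $\{x\cdot\xi\le\tilde c\,nT\}$; a second round with truncations of $(u^*(0,\cdot)-\delta,\,v^*(0,\cdot)-\delta)$ together with the short-time estimate \eqref{aux-eq0-2} then covers all times and the speed $c'$. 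Because everything is anchored at multiples of the period, the coefficients are never time-shifted, so Lemma \ref{spreading-lm2} applies verbatim and no uniform-in-phase relaxation statement is needed. Your contradiction/moving-frame scheme replaces this iteration by sequential compactness in the shift parameters; it shortens the endgame, but at the price of two extensions: a version of Lemma \ref{spreading-lm2}(2) with $n$-dependent (time-shifted) coefficients, which indeed follows from the same Gronwall estimate, and the uniform relaxation time $\tau$.

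That uniform $\tau$ is the one step where what you wrote is not yet a proof. Periodicity plus compactness of the spatial fundamental domain disposes of $y_0$ (for the constant datum $(\delta_0,0)$ the spatial shift is irrelevant anyway), but it does not by itself give uniformity over the continuum of starting phases $s_0\in[0,T]$: Lemma \ref{spreading-lm3} and (HB2) concern solutions launched at time $0$, and convergence that holds for each fixed $s_0$ need not be uniform in $s_0$. The claim is true and easy to repair: along solutions sandwiched between $(0,0)$ and $(u^*,v^*)$ one has $u_t\ge \mathcal{K}u-u+u\big(a_1-b_1u-c_1(v^*-v)\big)\ge-(1+C)u$ for a constant $C$ depending only on the coefficients and on $\max u^*,\max v^*$, so the solution launched at any $s_0\in[0,T]$ from $(\delta_0,0)$ satisfies $u(2T,\cdot)\ge\delta_0e^{-2(1+C)T}=:m>0$ and $v(2T,\cdot)\ge0$; comparing from the common absolute time $2T$ with the single datum $(m,0)$ and applying Lemma \ref{spreading-lm3} once (using time periodicity) yields one $\tau$ valid for all $s_0$ and $y_0$. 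Alternatively, you could sidestep the issue by taking $\tau\in T\ZZ$ after extracting the subsequence, since then the limiting phase of the frame is unchanged and only a single application of Lemma \ref{spreading-lm3} to the limit system is required. With this step supplied, your proof is correct.
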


\begin{proof}
First of all, by Lemma \ref{spreading-lm1},
\begin{equation}
\label{aux-eq0-1}
0\le u(t,x;u_0,v_0)\le u^*(t,x),\quad 0\le v(t,x;u_0,v_0)\le v^*(t,x)
\end{equation}
for all $t\ge 0$ and $x\in\RR^N$. For given $\delta>0$, let
$$ u_\delta(x)=u^*(0,x)-\delta,\quad  v_\delta(x)=v^*(0,x)-\delta.
$$
Observe that,
for any $\epsilon>0$, there is $0<\delta\le \epsilon$ such that
\begin{equation}
\label{aux-eq0-2}
\begin{cases}
|u(t,x; u_\delta(\cdot+z), v_\delta(\cdot+z),z)-u^*(t,x+z)|<\epsilon\cr
|u(t,x; u_\delta(\cdot+z),v_\delta(\cdot+z),z)-u^*(t,x+z)|<\epsilon
\end{cases}
\end{equation}
for all $t\in[0,T]$ and $x,z\in\RR^N$.

Let
$$\sigma=\min\{\liminf_{x\cdot\xi\le ct, t\to\infty}u(t,x;u_0,v_0),\min_{t\in\RR,x\in\RR^N}u^*(t,x),\min_{t\in\RR,x\in\RR^N}v^*(t,x)\}.
 $$
 Then there is $N_1>0$ such that
for $t\ge N_1T$ and $x\in\RR^N$ with  $x\cdot\xi \le ct$,
$$
u(t,x;u_0,v_0)\ge \sigma/2.
$$
Let
$$
\tilde u_0(x)\equiv \sigma/2,\quad \tilde v_0(x)\equiv 0.
$$
By Lemma \ref{spreading-lm3},
\begin{equation}
\label{aux-eq1}
(u(t,x;\tilde u_0,\tilde v_0),v(t,x;\tilde u_0,\tilde v_0))-(u^*(t,x),v^*(t,x))\to (0,0)
\end{equation}
as $t\to\infty$ uniformly in $x\in\RR^N$.

For any $c^{'}<c$, choose $\tilde c$ such that $c^{'}<\tilde c<c$.
Let $(\tilde u_n,\tilde v_n)\in X^+\times X^+$ be such that
\begin{equation*}
\tilde u_n(x)\begin{cases}
\le \sigma/2\quad &\forall\,\, x\in\RR^N\cr
=\sigma/2\quad &\forall\,\, x\in\RR^N,\,\,  x\cdot\xi\le (c-\tilde c)nT-1\\
=0\quad &\forall\,\, x\in\RR^N,\,\, x\cdot\xi \ge (c-\tilde c)nT,
\end{cases}
\end{equation*}
  and
  $$\tilde v_n(x)\equiv 0.
  $$
Then
\begin{equation}
\label{aux-eq2}
u(nT,x+y;u_0,v_0)\ge \tilde u_n(x),\quad v(nT,x+y;u_0,v_0)\ge \tilde v_n(x)
\end{equation}
for $y\cdot\xi\le \tilde  cnT$ and  $n\gg 1$, and
\begin{equation}
\label{aux-eq3}
(\tilde u_n(x),\tilde v_n(x))\to (\tilde u_0(x),\tilde v_0(x))
\end{equation}
as $n\to\infty$ uniformly in $x$ in bounded strips of the form $\{x\,|\, |x\cdot\xi|\le K\}$.
By  \eqref{aux-eq1}, \eqref{aux-eq3}, and Lemma \ref{spreading-lm2},  there is $N_2\ge N_1$ such that
$$
u(N_2T,x;\tilde u_n,\tilde v_n,z)\ge u^*(0,x+z)-\delta\ge \sigma/2
$$
and
$$ v(N_2T,x;\tilde u_n,\tilde v_n,z)\ge v^*(0,x+z)-\delta\ge \sigma/2
$$
for $|x\cdot\xi|\le \tilde c N_2T$,
 $z\in \RR^N$,  and $n\gg 1$. Then by \eqref{aux-eq2},  for any  $y\in \RR^N$ with $y\cdot\xi\le \tilde c nT$ and $n\gg 1$,
\begin{align*}
u((n+N_2)T,x+y;u_0,v_0)&=u(N_2T,x; u(nT,\cdot+y;u_0,v_0),v(nT,\cdot+y;u_0,v_0),y)\\
&\ge u(N_2T,x;\tilde u_n,\tilde v_n,y)\\
&\ge u^*(0,x+y)-\delta
\end{align*}
and
\begin{align*}
v((n+N_2)T,x+y;u_0,v_0)&=v(N_2T,x;  u(nT,\cdot+y;u_0,v_0),v(nT,\cdot+y;u_0,v_0),y)\\
&\ge v(N_2T,x;\tilde u_n,\tilde v_n,y)\\
&\ge v^*(0,x+y)-\delta
\end{align*}
for $|x\cdot\xi|\le \tilde c N_2T$ and $n\gg 1$.
Hence
\begin{equation}
\label{aux-eq4}
u(nT,x;u_0,v_0)\ge u^*(0,x)-\delta,\quad v(nT,x;u_0,v_0)\ge v^*(0,x)-\delta\quad \forall \,\, x\cdot\xi\le \tilde c nT,\,\, n\gg 1.
\end{equation}

Let $(\bar u_n,\bar v_n)\in X^+\times X^+$ be such that
$$
 \bar u_n(x)\begin{cases}
 \le u^*(0,x)-\delta\quad &\forall\,\, x\in\RR^N\cr
 =u^*(0,x)-\delta\quad &\forall \,\, x\in\RR^N,\,\,  x\cdot\xi\le (\tilde c-c^{'})nT-1\cr
=0\quad &\forall\,\, x\in\RR^N,\,\, x\cdot\xi\ge (\tilde c-c^{'})nT,
\end{cases}
$$
 and
 $$
 \bar v_n(x)\begin{cases}\le v^*(0,x)-\delta\quad &\forall \,\, x\in\RR^N\cr
 =v^*(0,x)-\delta\quad \,\, &\forall \,\,x\in\RR^N,\,\, x\cdot\xi\le (\tilde c-c^{'})nT-1\cr
=0\quad &\forall\,\,  x\in\RR^N,\,\, x\cdot\xi\ge (\tilde c-c^{'})nT.
\end{cases}
$$
Then
$$
u(nT,x+y;u_0,v_0)\ge \bar u_n(x),\quad v(nT,x+y;u_0,v_0)\ge \bar v_n(x)
$$
for $y\cdot\xi\le c^{'}nT$ and $n\gg 1$, and
$$
(\bar u_n(x),\bar v_n(x))\to ( u_\delta(x),v_\delta(x))
$$
as $n\to\infty$ uniformly in bounded strips of the form $|x\cdot\xi|\le K$.
By Lemma \ref{spreading-lm2} again,
\begin{equation}
\label{aux-eq6}
u(t,x;\bar u_n,\bar v_n,z)\ge u(t,x;u_\delta, v_\delta,z)-\epsilon,\quad v(t,x;\bar u_n,\bar v_n,z)\ge v(t,x; u_\delta,v_\delta,z)-\epsilon
\end{equation}
for $|x\cdot\xi|\le c^{'}T$, $z\in\RR^N$, $t\in [0,T]$, and $n\gg 1$.
Note that
\begin{align*}
u(t+nT,x+y;u_0,v_0)&=u(t,x;u(nT,\cdot+y;u_0,v_0),v(nT,\cdot+y;u_0,v_0),y)\\
&\ge u(t,x;\bar u_n,\bar v_n,y)
\end{align*}
and
\begin{align*}
v(t+nT,x+y;u_0,v_0)&=v(t,x;u(nT,\cdot+y;u_0,v_0),v(nT,\cdot+y;u_0,v_0),y)\\
&\ge v(t,x;\bar u_n,\bar v_n,y)
\end{align*}
for $t\in [0,T]$, $|x\cdot\xi|\le c^{'}T$, $y\cdot\xi\le c^{'}nT$, and $n\gg 1$. This together with
\eqref{aux-eq6} implies that
\begin{equation}
\label{aux-eq7}
\begin{cases}
u(t+nT,x+y;u_0,v_0)\ge u(t,x; u_\delta, v_\delta,y)-\epsilon\cr
 v(t+nT,x+y;u_0,v_0)\ge v(t,x;u_\delta, v_\delta,y)-\epsilon
 \end{cases}
\end{equation}
for $t\in [0,T]$, $|x\cdot \xi|\le c^{'}T$, $y\cdot\xi\le c^{'}nT$, and $n\gg 1$.
By \eqref{aux-eq0-2} and \eqref{aux-eq7}, we have that for $n\gg 1$,
\begin{equation}
\label{aux-eq8}
\begin{cases}
u(t+nT,x;u_0,v_0)\ge u^*(t+nT,x)-2\epsilon, \quad t\in[0,T],\,\, x\cdot\xi\le c^{'}(t+nT)\cr
 v(t+nT,x;u_0,v_0)\ge v^*(t+nT,x)-2\epsilon,\quad t\in [0,T],\,\, x\cdot\xi\le c^{'}(t+nT).
\end{cases}
\end{equation}
By \eqref{aux-eq0-1}, \eqref{aux-eq4}, and \eqref{aux-eq8}, for any $\epsilon>0$, there is $N>0$ such that for $t\ge NT$
and $x\cdot\xi\le c^{'}t$,
$$
-2\epsilon\le u(t,x;u_0,v_0)-u^*(t,x)\le 0,\quad -2\epsilon\le  v(t,x;u_0,v_0)-v^*(t,x)\le 0.
$$
The lemma thus follows.
\end{proof}

Let $\eta(s)$ be the function defined by
\begin{equation}
\label{eta-definition-eq} \eta(s)=\frac{1}{2}(1+\tanh\frac{s}{2}),\quad s\in\RR.
\end{equation}

Observe that
\begin{equation}
\label{eta-derivative-eq} \eta^{'}(s)=\eta(s)(1-\eta(s)),\quad s\in\RR
\end{equation}
and
\begin{equation}
\label{eta-second-derivative-eq} \eta^{''}(s)=\eta(s)(1-\eta(s))(1-2\eta(s)),\quad s\in\RR.
\end{equation}

\begin{lemma}
\label{spreading-lm5}
 There is $C_0>0$ such that for every $C\geq C_0$ and  every $\xi\in
S^{N-1}$, $(u^+(t,x;C)$, $v^+(t,x;C))$ is a super-solution of \eqref{main-eq1} on $[0,\infty)$, where
$$u^+(t,x;C)=u^*(t,x)(1-\eta(x\cdot \xi-Ct)),
$$
and
$$
v^+(t,x;C)=v^*(t,x)(1-\eta(x\cdot \xi-Ct)).
$$
\end{lemma}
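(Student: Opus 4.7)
The plan is to substitute $u^+ = u^*\phi$ and $v^+ = v^*\phi$, with $\phi(s) = 1-\eta(s)$ and $s = x\cdot\xi - Ct$, into each of the two inequalities required for a super-solution of \eqref{main-eq1}, use the equations satisfied by $u^*$ and $v^*$ from Proposition \ref{single-species-periodic-solution-prop} to cancel large pieces, and then dominate the remaining nonlocal ``commutator'' by choosing $C$ large in a $\xi$-independent way. Throughout, I will exploit the identity $-\phi'(s) = \phi(s)(1-\phi(s))$, which follows directly from \eqref{eta-derivative-eq}.

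For the $u^+$-inequality, a direct computation gives $u^+_t = u^*_t\phi - Cu^*\phi'$ and
$$\mathcal{K}u^+ - u^+ = \phi\bigl(\mathcal{K}u^* - u^*\bigr) + R_u, \qquad R_u(t,x) := \int_{\RR^N}\kappa(y-x)u^*(t,y)\bigl[\phi(y\cdot\xi-Ct)-\phi(x\cdot\xi-Ct)\bigr]\,dy.$$
Substituting $\mathcal{K}u^* - u^* = u^*_t - u^*(a_1 - b_1 u^*)$ into the super-solution inequality, the $u^*_t\phi$ and $a_1 u^*\phi$ pieces cancel and the inequality collapses (after using $-\phi' = \phi(1-\phi)$) to
$$\phi(s)(1-\phi(s))\,u^*(t,x)\bigl[C + c_1(t,x)v^*(t,x) - b_1(t,x)u^*(t,x)\bigr] \geq R_u(t,x).$$
The key step is to bound $R_u$ pointwise by a multiple of $\phi(s)(1-\phi(s))$. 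Using $\eta(s)(1-\eta(s)) = \tfrac{1}{4}\operatorname{sech}^2(s/2)$ and the elementary inequality $e^{s'/2}+e^{-s'/2} \leq e^{|s-s'|/2}(e^{s/2}+e^{-s/2})$, one obtains $|\phi'(\tilde s)| \leq e^{|\tilde s - s|}\phi(s)(1-\phi(s))$. Since $\kappa$ is supported in $\|z\|\leq r_0$ and $|(y-x)\cdot\xi|\leq\|y-x\|$ for every $\xi\in S^{N-1}$, the mean value theorem yields $|R_u(t,x)| \leq \|u^*\|_\infty r_0 e^{r_0}\phi(s)(1-\phi(s))$. Because $u^*\in X_p^{++}$ so $\inf u^*>0$, the reduced inequality holds whenever $C \geq \|u^*\|_\infty r_0 e^{r_0}/\inf u^* + \sup_{t,x}(b_1 u^* - c_1 v^*)$, and this threshold is independent of $\xi$.

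The $v^+$-inequality is treated identically: expanding the cooperative reaction $b_2(v^*-v^+)u^+ + v^+(a_2 - 2c_2 v^* + c_2 v^+)$ against $(u^+,v^+) = (u^*\phi, v^*\phi)$, the $v^*_t\phi$, $a_2 v^*\phi$ and $c_2 v^{*2}\phi$ pieces cancel using $v^*_t = \mathcal{K}v^* - v^* + v^*(a_2 - c_2 v^*)$, leaving the analogous inequality
$$\phi(s)(1-\phi(s))\,v^*(t,x)\bigl[C + c_2(t,x)v^*(t,x) - b_2(t,x)u^*(t,x)\bigr] \geq R_v(t,x),$$
with $R_v$ defined by the same formula with $v^*$ in place of $u^*$, and $|R_v|\leq \|v^*\|_\infty r_0 e^{r_0}\phi(s)(1-\phi(s))$. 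Taking $C_0$ to be the maximum of the two resulting thresholds secures both inequalities for every $C\geq C_0$ and every $\xi\in S^{N-1}$.

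The main obstacle is the matching of the factor $\phi(s)(1-\phi(s))$ on both sides of the reduced inequalities: a naive Lipschitz bound on $\phi$ would only produce a uniform constant, which is useless as $s\to\pm\infty$ where $\phi(1-\phi)$ decays exponentially and would cause the super-solution property to fail in the tails. The explicit $\operatorname{sech}^2$ structure of $\eta'$, combined with the compact support of $\kappa$, is precisely what supplies the self-bounding pointwise comparison and allows a single finite, $\xi$-uniform $C_0$ to work.
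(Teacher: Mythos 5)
Your proof is correct and follows essentially the same route as the paper: cancel against the equations satisfied by $u^*$ and $v^*$, reduce both inequalities to ones carrying the common factor $\eta'=\phi(1-\phi)$, bound the nonlocal remainder by a constant multiple of that factor using the compact support of $\kappa$, and choose $C$ large uniformly in $\xi$ via $\inf u^*,\inf v^*>0$. The only difference is cosmetic: you explicitly verify, via the $\operatorname{sech}^2$ identity and the mean value theorem, the quotient bound $\abs{\eta(s')-\eta(s)}/\eta'(s)\le r_0e^{r_0}$ for $\abs{s'-s}\le r_0$, which the paper simply asserts as the existence of the constant $M_1$.
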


\begin{proof}
Recall that
$$
f(t,x,u,v)=u\Big(a_1(t,x)-b_1(t,x)u-c_1(t,x)(v^*(t,x)-v)\Big).
$$
 By a direct calculation, we have
\begin{align*}
&f(t,x,u^*(t,x),v^*(t,x))(1-\eta(x\cdot \xi-Ct))\\
 &\quad -
f\bigl(t,x,u^*(t,x)(1-\eta(x\cdot \xi-Ct)),v^*(t,x)(1-\eta(x\cdot \xi-Ct))\bigr )\\
&=\big(1-\eta(x\cdot\xi-CT)\big) u^*(t,x)\Big\{\big[ a_1(t,x)-b_1(t,x)u^*(t,x)-c_1(t,x)(v^*(t,x)-v^*(t,x))\big]\\
&\quad -\big[a_1(t,x)-b_1(t,x)u^*(t,x)(1-\eta(x\cdot\xi-Ct))-c_1(t,x)v^*(t,x)\eta(x\cdot\xi-Ct)\big]\Big\}\\
&=\big(1-\eta(x\cdot\xi-Ct)\big)u^*(t,x)\eta(x\cdot\xi-Ct) \big(c_1(t,x)v^*(t,x)-b_1(t,x)u^*(t,x)\big)\\
&=\eta^{'}(x\cdot\xi-Ct) u^*(t,x)\big (c_1(t,x)v^*(t,x)-b_1(t,x)u^*(t,x)\big).
\end{align*}
This implies that
 \begin{align*}
&u^+_t(t,x;C)-\Big[\int_{\RR^N}\kappa(y-x)u^+(t,y;C)dy-u^+(t,x;C)+f(t,x,u^+(t,x;C),v^+(t,x;C))\Big]\\
&=\eta^{'}(x\cdot \xi-Ct)\Big\{Cu^*(t,x)
 +\int_{\RR^N}\kappa(y-x)u^*(t,y)\frac{\eta(y\cdot \xi-Ct)-
\eta(x\cdot\xi-Ct)}{\eta^{'}(x\cdot\xi-Ct)}dy\\
&\quad + u^*(t,x)\big (c_1(t,x)v^*(t,x)-b_1(t,x)u^*(t,x)\big)
\Big\}.
\end{align*}
Observe that there are $M_0$ and $M_1>0$ such that
$$
u^*(t,x),\quad  v^*(t,x)\geq M_0\quad \text{for all}\quad t\geq 0,\quad x\in\RR^N,
$$
$$
|\frac{\eta(y\cdot \xi-Ct)- \eta(x\cdot\xi-Ct)}{\eta^{'}(x\cdot\xi-Ct)}|\leq M_1\quad \text{for all}\quad t\geq
0,\quad x,y\in\RR^N,\quad \|y-x\|\leq r_0.
$$
Therefore there is $C_1>0$ such that for every $C\geq C_1$,
$$
u^+_t(t,x;C)-\big[\int_{\RR^N}\kappa(y-x)u^+(t,y;C)dy-u^+(t,x;C)+f(t,x,u^+(t,x;C),v^+(t,x;C))\big]\ge 0
$$
for $t\ge 0$ and $x\in\RR^N$.

Similarly, we can prove that there is $C_2>0$ such that for every $C\geq C_2$,
$$
v^+_t(t,x;C)-\big[\int_{\RR^N}\kappa(y-x)v^+(t,y;C)dy-v^+(t,x;C)+g(t,x,u^+(t,x;C),v^+(t,x;C))\big]\ge 0
$$
for $t\ge 0$ and $x\in\RR^N$. The lemma then follows with $C_0=\max\{C_1,C_2\}$.
\end{proof}

We now prove Theorem \ref{spreading-speed-thm1}.

\begin{proof}[Proof of Theorem \ref{spreading-speed-thm1}]
 First, consider
\begin{equation}
\label{aux-main-eq}
\bar u_t=\mathcal{K}\bar u-\bar u+\bar u(a_1(t,x)-c_1(t,x)v^*(t,x)-b_1(t,x)\bar u),\quad x\in\RR^N.
\end{equation}
For given $u_0\in X^+$, let $\bar u(t,x;u_0)$ be the solution of \eqref{aux-main-eq}
with $u(0,x;u_0)=u_0(x)$.
Let
\begin{equation*}
\bar c_{\inf}^*(\xi)=\inf_{\mu>0}\frac{\lambda_\xi(\mu)}{\mu}.
\end{equation*}
By Proposition \ref{comparison-single-species-prop} and Lemma  \ref{spreading-lm1}, for any $(u_0,v_0)\in X_1^+(\xi)\times X_2^+(\xi)$,
$$
u(t,x;u_0,v_0)\ge \bar u(t,x;u_0).
$$
By Proposition \ref{spreading-single-prop}, for any $c<\bar c_{\inf}^*(\xi)$,
$$
\liminf_{x\cdot\xi\le ct,t\to\infty} \bar u(t,x;u_0)>0
$$
and hence
$$
\liminf_{x\cdot\xi\le ct,t\to\infty}u(t,x;u_0,v_0)>0.
$$
Then by Lemma \ref{spreading-lm4}, for any $c<\bar c_{\inf}^*(\xi)$,
$$
\limsup_{x\cdot\xi\le ct,t\to\infty}\big[|u(t,x;u_0,v_0)-u^*(t,x)|+|v(t,x;u_0,v_0)-v^*(t,x)|\big]=0
$$
and hence
\begin{equation}
\label{ge-eq}
c_{\inf}^*(\xi)\ge\bar c^*_{\inf}(\xi).
\end{equation}

Next, for any given $(u_0,v_0)\in X_1^+(\xi)\times X_2^+(\xi)$,
there is $N>0$ such that
$$
u_0(x)\le u^+(NT,x;C_0),\quad v_0(x)\le v^+(NT,x;C_0)\quad \forall\,\, x\in\RR^N,
$$
where $u^+(t,x;C_0)$ and $v^+(t,x;C_0)$ are as in Lemma \ref{spreading-lm5}.
Then  by comparison principle for cooperative systems,
$$
u(t,x;u_0,v_0)\le u^+(t+NT,x;C_0),\quad v(t,x;u_0,v_0)\le v^+(t+NT,x;C_0)\quad \forall\,\, t\ge 0,\,\, x\in\RR^N.
$$
This implies that  for any $c>C_0$,
$$
\limsup_{x\cdot\xi\ge ct,t\to\infty}[u(t,x;u_0,v_0)+v(t,x;u_0,v_0)]=0
$$
and hence
\begin{equation}
\label{le-eq}
c^*_{\sup}(\xi)\le C_0.
\end{equation}
Therefore $[c^*_{\inf}(\xi),c^*_{\sup}(\xi)]$ is a bounded interval.

Finally, \eqref{lower-bound-eq} follows from \eqref{ge-eq}.
\end{proof}

Before proving Theorem \ref{spreading-speed-thm2}, we make the following remark.

\begin{remark}
\label{spreading-speed-rk2}
\begin{itemize}
\item[(1)]
 In the spatially homogeneous case, that is, $a_k(t,x)\equiv a_k(t)$, $b_k(t,x)\equiv b_k(t)$, and
$c_k(t,x)\equiv c_k(t)$ $(k=1,2)$, as in \cite{FaZh} and \cite{WeLeLi}, we can introduce
two spreading speeds, $c^*_+(\xi)$ and $\bar c_+(\xi)$, for the time $T$ map or Poincar\'e map $Q$ of \eqref{main-eq1},  as follows.
Note that in such case, $u^*(t,x)\equiv u^*(t)$, $v^*(t,x)\equiv v^*(t)$, and $Q(u^*(0),v^*(0))=(u^*(0),v^*(0))$.
Let
$\tilde \phi_0(\cdot)$ and $\tilde \psi_0(\cdot):\RR\to\RR^+$ be continuous non-increasing functions with
$$
\tilde \phi_0(s)=0,\quad \tilde \psi_0(s)=0\quad \forall s\ge 0
$$
and
$$
\tilde \phi_0(-\infty)=u^*(0),\quad \tilde \psi_0(-\infty)=v^*(0).
$$
For any $c\in\RR$, let
$$
\phi_0(c,x)=\tilde\phi_0(x\cdot\xi),\quad \psi_0(c,x)=\tilde\psi(x\cdot\xi),
$$
$$\phi_n(c,x)=\max\{\phi_0(x), u(T,x+c\xi;\phi_{n-1}(c,\cdot),\psi_{n-1}(c,\cdot))\},
$$
$$
\psi_n(c,x)=\max\{\psi_0(x),v(T,x+c\xi;\phi_{n-1}(c,\cdot),\psi_{n-1}(c,\cdot))\},
$$
and
$$
\tilde \phi_n(c,s)=\phi_n(c,s\xi),\quad \tilde \psi_n(c,s)=\psi_n(c,s\xi)
$$
for $n=1,2,\cdots$.
Thanks to the spatial homogeneity, $\tilde \phi_n(c,s)$ and $\tilde \psi_n(c,s)$ are non-increasing in
$c$ and $s$. By  comparison principle for cooperative systems, $\tilde\phi_n(c,s)$ and $\tilde\psi_n(c,s)$ are non-decreasing
in $n$.
Let
$$
\tilde \phi(c,x)=\lim_{n\to\infty}\tilde \phi_n(c,x),\quad \tilde \psi(c,x)=\lim_{n\to\infty}\tilde \psi_n(c,x).
$$
Define $c_+^*(\xi)$ and $\bar c_+(\xi)$  by
$$
c_+^*(\xi)=\sup\{c\,|\, (\tilde \phi(c,\infty),\tilde \psi(c,\infty))=(u^*(0),v^*(0))\}
$$
and
$$
\bar c_+(\xi)=\sup\{c\,|\,(\tilde \phi(c,\infty),\tilde \psi(c,\infty))\not =(0,0)\}.
$$
  We remark  that, by the similar arguments as in \cite{FaZh} and \cite{WeLeLi},  $c_+^*(\xi)$ and $\bar c_+(\xi)$ are well defined.
  We also remark that
$$
c_{\inf}^*(\xi)=c_+^*(\xi),\quad c_{\sup}^*(\xi)=\bar c_+(\xi).
$$

\item[(2)] In the case that $a_k(t,x)$, $b_k(t,x)$, and $c_k(t,x)$ $(k=1,2)$ are independent of both $t$ and $x$,
it is proved in \cite[Theorem 5.3]{FaZh} that $c_{\inf}^*(\xi)=c_{\sup}^*(\xi)$.

\item[(3)] It remains open whether \eqref{main-eq} has a single spreading speed in each direction
in both temporally and spatially periodic media.
\end{itemize}
\end{remark}

\begin{proof}[Proof of Theorem \ref{spreading-speed-thm2}]
It can be proved by applying the similar arguments as in \cite[Theorem 5.3]{FaZh}. For completeness, we give a proof
in the following.

 We first consider the case that $\xi=(1,0,\cdots,0)$.

 First of all, note that, for given $(u_0,v_0)$, if
$u_0(x_1,x_2$, $\cdots,x_N)=u_0(x_1,0,\cdots,0)$ and $v_0(x_1$, $x_2,\cdots,x_N)=v_0(x_1$, $0,\cdots,0)$,
(i.e. $u_0(x)$ and $v_0(x)$ are independent of
$x_2,\cdots, x_N$), then $u(t,x;u_0,v_0)$ and $v(t,x;u_0,v_0)$ are also independent of
$x_2,\cdots,x_N$.  We then consider the following system in one space dimension induced from
\eqref{main-eq1},
\begin{equation}
\label{main-induced-eq}
\begin{cases}
u_t(t,x_1)=\int_{\RR}\tilde \kappa(y_1-x_1)u(t,y_1)dy_1-u(t,x_1)\cr
\qquad\qquad\quad   +u\Big(a_1(t)-b_1(t)u-c_1(t)(v^*(t)- v)\Big),\quad x_1\in\RR\cr
v_t(t,x_1)=\int_{\RR}\tilde\kappa(y_1-x_1)v(t,y_1)dy_1-v(t,x_1)+b_2(t)\Big(v^*(t)-v\Big) u\cr
\qquad\qquad \quad  +v\Big(a_2(t)-2c_2(t)v^*(t)+c_2(t)v\Big),\quad\quad\quad  x_1\in\RR,
\end{cases}
\end{equation}
where
$$
\tilde \kappa(x_1)=\int_{\RR^{N-1}}\kappa(x_1,x_2,\cdots,x_N)dx_2\cdots d x_N.
$$
Observe that $u^*(t,x)\equiv u^*(t)$ and $v^*(t,x)\equiv v^*(t)$, and
$\tilde E_0=(0,v^*)$, $\tilde E_1=(0,0)$, and $\tilde E_2=(u^*,v^*)$ are time periodic solutions of
\eqref{main-induced-eq}. Hence $(0,v^*(0))$, $(0,0)$, and $(u^*(0),v^*(0))$ are the
 fixed points of the time $T$ map or the Poincar\'e map $Q$ of \eqref{main-induced-eq}.
 Observe also that $[c_{\inf}^*(\xi),c_{\sup}^*(\xi)]$ is the spreading speed
interval of \eqref{main-induced-eq}, and hence as mentioned in Remark \ref{spreading-speed-rk2}, $c_{\inf}^*(\xi)$ and
$c_{\sup}^*(\xi)$ are two spreading speeds of  the  Poincar\'e map $Q$ of \eqref{main-induced-eq}.

Next, it is not difficult to see that the Poincar\'e map $Q$ of \eqref{main-induced-eq} satisfies (A1)-(A5) in \cite{FaZh}
with $\beta=(u^*(0),v^*(0))$,
$$
\mathcal{M}=\{(u,v):\RR\to\RR^2\,|\, u,v \,\,\,\text{are non-increasing and bounded functions}\} ,
$$
and
$$
\mathcal{M}_\beta=\{(u(\cdot),v(\cdot))\in\mathcal{M}\,|\, 0\le u(x)\le u^*(0),\quad 0\le v(x)\le v^*(0)\quad \forall\,\,x\in\RR\}.
$$
Assume $c_{\inf}^*(\xi)<c_{\sup}^*(\xi)$.
 Then by \cite[Theorem 3.1(1) and (3)]{FaZh}, for
any $c_{\inf}^*(\xi)\le c< c_{\sup}^*(\xi)$, there are non-increasing functions
$\Phi(x_1)$ and $\Psi(x_1)$ such that
\begin{equation}
\label{auux-eq1}
Q^n(\Phi,\Psi)(x_1)=(\Phi(x_1-cnT),\Psi(x_1-cnT))
\end{equation}
for all but countably many $x\in\RR$,
\begin{equation}
\label{auux-eq2}
\Phi(-\infty)=u^*(0),\quad \Psi(-\infty)=v^*(0)
\end{equation}
and
\begin{equation}
\label{auux-eq3}
\Phi(\infty)=0,\quad \Psi(\infty)=v^*(0).
\end{equation}
By \eqref{auux-eq2}, \eqref{auux-eq3} and the monotonicity of $\Psi(\cdot)$, we  must have $\Psi(x_1)\equiv v^*(0)$.
Hence
\begin{equation}
\label{auux-eq4}
u(nT,x_1;\Phi(\cdot))=\Phi(x_1-cnT)
\end{equation}
for all but countably many $x\in\RR$, where
 $u(t,x_1;\Phi(\cdot))$ is the solution
of
\begin{equation}
\label{single-speed-eq1}
u_t=\int_{\RR}\tilde \kappa(y_1-x_1)u(t,y_1)dy_1-u(t,x_1)+u\big(a_1(t)-b_1(t)u).
\end{equation}
But $c_{\inf}^*(\xi)<c^*_0(\xi,a_1)$, where $c_0^*(\xi,a_1)$ is the spreading speed of \eqref{single-speed-eq1}. Choose $c, c^{'}$ such that  $c_{\inf}^*(\xi)<c<c^{'}<\min\{c^*_0(a_1,\xi),c_{\sup}^*(\xi)\}$.
 By Proposition \ref{spreading-single-prop},
 \begin{equation}
 \label{auux-eq5}
 \limsup_{x_1\le c^{'}nT,n\to\infty}|u(nT,x_1;\Phi(\cdot))-u^*(nT)|=0.
 \end{equation}
 By \eqref{auux-eq4},
 $$
 \limsup_{x_1\le c^{'}nT,n\to\infty}|u(nT,x_1;\Phi(\cdot))-u^*(nT)|=\limsup_{x_1\le c^{'}nT,n\to\infty}|\Phi(x_1-cnT)-u^*(nT)|>0,
 $$
 which contradicts to \eqref{auux-eq5}. Therefore, $c_{\inf}^*(\xi)=c_{\sup}^*(\xi)$.

 For a general $\xi\in S^{n-1}$, without loss of generality, we may assume that $\xi_1\not= 0$. Then
 make the following change of space variables,
 $$
 \tilde x_1=x\cdot\xi,\quad \tilde x_2=x_2,\cdots,\tilde x_n=x_n
 $$
 and consider the following   system induced from \eqref{main-eq1},
 $$
 \begin{cases}
 u_t(t,\tilde x_1)=\int_{\RR}\tilde\kappa(\tilde y_1-\tilde x_1)u(t,\tilde y_1)d\tilde y_1-u(t,\tilde x_1)\cr
  \qquad\qquad\quad +u\Big(a_1(t)-b_1(t)u-c_1(t)(v^*(t)- v)\Big),\quad \tilde x_1\in\RR\cr
  v_t(t,\tilde x_1)=\int_{\RR}\tilde\kappa(\tilde y_1-\tilde x_1)v(t,\tilde y_1)d\tilde y_1-v(t,\tilde x_1)+b_2(t)\Big(v^*(t)-v\Big) u\cr
\qquad\qquad \quad  +v\Big(a_2(t)-2c_2(t)v^*(t)+c_2(t)v\Big),\quad\quad\quad \tilde x_1\in\RR,
 \end{cases}
 $$
 where
 $$
 \tilde \kappa(\tilde x_1)=\xi_1\int_{\RR^{N-1}}\kappa\Big(\frac{1}{\xi_1}\big(\tilde x_1-\xi_2\cdot\tilde x_2-\cdots-\xi_N\cdot\tilde x_N,\tilde x_2,\cdots,\tilde x_N) d\tilde x_2\cdots\tilde x_N.
 $$
Now by the similar arguments as in the above, $c_{\inf}^*(\xi)=c_{\sup}^*(\xi)$.
\end{proof}

\begin{remark}
\label{spreading-speed-rk4}
The concept of spreading speed intervals for \eqref{main-eq} introduced in this section
and the results and techniques developed  in this section can be extended to two species competition systems with different nonlocal dispersal rates, that is,
the following two species competition systems,
 \begin{equation}
\label{main-different-rate-eq}
\begin{cases}
u_t=d_1[\int_{\RR^N}\kappa(y-x)u(t,y)dy-u(t,x)]+u(a_1(t,x)-b_1(t,x)u-c_1(t,x)v),\quad x\in\RR^N\cr
v_t=d_2[\int_{\RR^N}\kappa(y-x)u(t,y)dy-u(t,x)]+v(a_2(t,x)-b_2(t,x)u-c_2(t,x)v),\quad x\in\RR^N.
\end{cases}
\end{equation}

\end{remark}

\section{Linear Determinacy for Spreading Speeds and the Proof of Theorem \ref{linear-determinacy-thm}}

In this section, we explore the linear determinacy for the spreading speeds  and prove Theorem \ref{linear-determinacy-thm}.
Throughout this section, we assume (HB0)-(HB2). We also assume  (HL0) and (HL1) or (HL2).
In addition, we assume that $\xi\in S^{N-1}$ is given and fixed. We first prove some lemmas.

Consider the linearization of \eqref{main-eq1} at $(0,0)$,
\begin{equation}
\label{main-linear-eq}
\begin{cases}
u_t=\mathcal{K}u-u+(a_1(t,x)-c_1(t,x)v^*(t,x))u,\quad &x\in\RR^N\cr
v_t=\mathcal{K}v-v+b_2(t,x)v^*(t,x)u+(a_2(t,x)-2c_2(t,x)v^*(t,x))v,\quad &x\in\RR^N.
\end{cases}
\end{equation}
Consider also the following associated eigenvalue problem of \eqref{main-linear-eq},
\begin{equation}
\label{main-ev-eq}
\begin{cases}
-u_t+\mathcal{K}_{\xi,\mu}u-u+(a_1(t,x)-c_1(t,x)v^*(t,x))u=\lambda u,\quad &x\in\RR^N\cr
-v_t+\mathcal{K}_{\xi,\mu}v-v+b_2(t,x)v^*(t,x)u+(a_2(t,x)-2c_2(t,x)v^*(t,x))v=\lambda v,\quad &x\in\RR^N\cr
u,v\in\mathcal{X}_P,
\end{cases}
\end{equation}
where $\mathcal{K}_{\xi,\mu}$ is as in  \eqref{k-delta-xi-mu-op}.
Recall that $\lambda_{\xi}(\mu)$ is the principal spectrum point of
\begin{equation}
\label{linear-ev-eq1}
\begin{cases}
-u_t+\mathcal{K}_{\xi,\mu}u-u+(a_1(t,x)-c_1(t,x)v^*(t,x))u=\lambda u,\quad x\in\RR^N\cr
u\in \mathcal{X}_p.
\end{cases}
\end{equation}
Let
$$
\bar c^*_{\inf}(\xi)=\inf_{\mu>0}\frac{\lambda_{\xi}(\mu)}{\mu}.
$$

Before proving Theorem \ref{linear-determinacy-thm}, we  prove some lemmas.

Note that $\frac{\lambda_{\xi}(\mu)}{\mu}\to\infty$ as $\mu\to 0+$ or $\mu\to\infty$ (see the arguments
in \cite[Proposiiton 3.4]{RaShZh}). Hence there is
 $\mu^*(\xi)$  such that
$$
\bar c^*_{\inf}(\xi)=\frac{\lambda_{\xi}(\mu^*(\xi))}{\mu^*(\xi)}.
$$

\begin{lemma}
\label{linear-determinacy-lm1}
Assume (HL1) or (HL2).
For any $\epsilon>0$, there are $\bar a_1(t,x)\ge a_1(t,x)$, $\bar c_1(t,x)\ge c_1(t,x)$, and $\bar v^*(t,x)$ such that
\begin{itemize}
\item[(1)]
$$
\bar a_1(t,x)-\bar c_1(t,x)\bar v^*(t,x)\ge a_1(t,x)-c_1(t,x)v^*(t,x);
$$

\item[(2)]
the principal eigenvalue $\bar\lambda_{\xi}(\mu^*(\xi))$ of
\begin{equation}
\label{linear-ev-eq2}
\begin{cases}
-u_t+ \mathcal{K}_{\mu^*(\xi),\xi}u-u+(\bar a_1(t,x)-\bar c_1(t,x)\bar v^*(t,x))u=\lambda u\cr
u\in \mathcal{X}_p,
\end{cases}
\end{equation}
exists;

\item[(3)]
$$
\frac{\bar \lambda_{\xi}(\mu^*(\xi))}{\mu^*(\xi)}\le \frac{ \lambda_{\xi}(\mu^*(\xi))}{\mu^*(\xi)}+\epsilon;
$$

\item[(4)]
the principal spectrum point of
\begin{equation}
\label{linear-ev-eq3}
\begin{cases}
-v_t+\mathcal{K}_{\xi,\mu}v-v+(a_2(t,x)-2c_2(t,x)v^*(t,x))v-\bar \lambda_{\xi}(\mu^*(\xi)) v=\lambda v\cr
v\in\mathcal{X}_p
\end{cases}
\end{equation}
is negative.
\end{itemize}
\end{lemma}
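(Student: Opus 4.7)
The plan is to build $(\bar a_1, \bar c_1, \bar v^*)$ by a small upward perturbation of $a_1$ alone (keeping $\bar c_1 = c_1$ and $\bar v^* = v^*$), chosen so that the resulting zeroth-order coefficient $\bar a_1 - \bar c_1 \bar v^*$ satisfies the vanishing hypothesis of Proposition \ref{PE-sufficient-prop} (which promotes its principal spectrum point to a genuine principal eigenvalue), while staying within $\epsilon \, \mu^*(\xi)$ of $a_1 - c_1 v^*$ via Proposition \ref{PE-perturbation-prop}. The assumption (HL1) or (HL2) then supplies a strictly positive gap between the coefficients $a_1 - c_1 v^*$ and $a_2 - 2 c_2 v^*$, which propagates through the monotonicity of the principal spectrum point to yield (4).

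Concretely, fix $\epsilon > 0$ and set $a(t,x) = a_1(t,x) - c_1(t,x) v^*(t,x) \in \mathcal{X}_p$. Apply Proposition \ref{PE-perturbation-prop} with parameter $\epsilon' = \epsilon \, \mu^*(\xi)$ and $M = \mu^*(\xi)$ to produce $a^+ \in \mathcal{X}_p$ with $a \le a^+ \le a + \epsilon'$, satisfying the vanishing condition of Proposition \ref{PE-sufficient-prop}, and with $|\lambda_0(\xi, \mu^*(\xi), a^+) - \lambda_\xi(\mu^*(\xi))| < \epsilon'$. Define $\bar c_1 := c_1$, $\bar v^* := v^*$, and $\bar a_1 := a^+ + c_1 v^*$. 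Then $\bar a_1 \ge a_1$, $\bar c_1 \ge c_1$, and $\bar a_1 - \bar c_1 \bar v^* = a^+ \ge a_1 - c_1 v^*$, giving (1); Proposition \ref{PE-sufficient-prop} identifies $\bar \lambda_\xi(\mu^*(\xi)) := \lambda_0(\xi, \mu^*(\xi), a^+)$ as the principal eigenvalue of \eqref{linear-ev-eq2}, giving (2); and dividing by $\mu^*(\xi)$ yields (3).

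The remaining work is (4), which I expect to be the main point. Under (HL1) or (HL2), the hypothesis together with the positivity and periodicity of $b_2$ and $v^*$ produces a constant $\delta_0 > 0$ with
\begin{equation*}
a_1(t,x) - c_1(t,x) v^*(t,x) \ge a_2(t,x) - 2 c_2(t,x) v^*(t,x) + \delta_0
\end{equation*}
for all $(t,x) \in \RR \times \RR^N$; under (HL2) the extra factors $c_{iM}/b_{iL}$ are themselves positive constants, so the same argument applies. Using the monotonicity of $\lambda_0(\xi, \mu, \cdot)$ in its zeroth-order coefficient together with its shift-invariance under additive constants (both consequences of the growth-rate characterization in Proposition \ref{growth-rate-prop}), one gets
\begin{equation*}
\lambda_0(\xi, \mu^*(\xi), a_2 - 2 c_2 v^*) \le \lambda_\xi(\mu^*(\xi)) - \delta_0 < \lambda_\xi(\mu^*(\xi)) \le \bar \lambda_\xi(\mu^*(\xi)),
\end{equation*}
so the principal spectrum point of \eqref{linear-ev-eq3} at $\mu = \mu^*(\xi)$, which is the left side minus the right side, is strictly negative. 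The main obstacle I foresee is justifying these monotonicity and shift properties for the principal spectrum point (rather than for a principal eigenvalue, which is not a priori available for the $v$-equation here); both nevertheless follow routinely from the representation in Proposition \ref{growth-rate-prop}.
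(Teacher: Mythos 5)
Your proposal is correct and follows essentially the same route as the paper: (1)--(3) come from the perturbation result (Proposition \ref{PE-perturbation-prop} together with Proposition \ref{PE-sufficient-prop}) applied to the combined coefficient $a_1-c_1v^*$, and (4) comes from the uniform positive gap $a_1-c_1v^*\ge a_2-2c_2v^*+\delta_0$ supplied by (HL1) or (HL2) plus monotonicity and constant-shift of the principal spectrum point. Your write-up merely makes explicit (via $\delta_0$ and the growth-rate characterization) steps the paper leaves implicit.
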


\begin{proof}
(1), (2), and (3) follow from Proposition \ref{PE-perturbation-prop}.

(4) By (1) and (HL1) or (HL2),
$$
\bar a_1(t,x)-\bar c_1(t,x)\bar v^*(t,x)\ge a_1(t,x)-c_1(t,x)v^*(t,x)> a_2(t,x)-2c_2(t,x)v^*(t,x)
$$
for all  $t\in\RR$ and  $x\in\RR^N$.
Note that
$$
\lambda(\mu,\xi,\bar a_1-\bar c_1 \bar v^*-\bar \lambda_{\xi}(\mu^*(\xi)))=0.
$$
Hence
$$
\lambda(\mu,\xi,a_2(t,x)-2c_2(t,x)v^*(t,x)-\bar \lambda_{\xi}(\mu^*(\xi)))<0.
$$
\end{proof}

In the following, we put $\mu=\mu^*(\xi)$.
Let  $u_{\xi,\mu}(\cdot,\cdot)\in\mathcal{X}_p$ be a positive principal eigenfunction of \eqref{linear-ev-eq2}.
By Lemma \ref{linear-determinacy-lm1} (4) and Proposition \ref{nonhomogeneous-prop},
there is a unique  time and space periodic positive solution in $\mathcal{X}_p$, denoted by $v_{\xi,\mu}$,  of
\begin{equation}
\label{linear-ev-eq4}
v_t=\mathcal{K}_{\xi,\mu}v-v+(a_2(t,x)-2c_2(t,x)v^*(t,x))v-\bar \lambda_{\xi}(\mu) v+b_2(t,x)v^*(t,x)u_{\xi,\mu}(t,x),\quad x\in\RR^N
\end{equation}
Moreover, $v_{\xi,\mu}$ is a globally  asymptotically stable solution of \eqref{linear-ev-eq4} in $X_p$.

\begin{lemma}
\label{linear-determinacy-lm2}
Assume (HL1) or (HL2). Then
 $$c_1(t,x) v_{\xi,\mu}(t,x)\le b_1(t,x) u_{\xi,\mu}(t,x)$$ and
 $$c_2(t,x)v_{\xi,\mu}(t,x)\le b_2(t,x) u_{\xi,\mu}(t,x).$$
\end{lemma}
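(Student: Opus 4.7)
The plan is to show that a suitable scalar multiple of $u_{\xi,\mu}$ serves as a time-periodic supersolution of the linear inhomogeneous equation \eqref{linear-ev-eq4} defining $v_{\xi,\mu}$. Combined with the negativity of the principal spectrum point in Lemma \ref{linear-determinacy-lm1}(4) and the uniqueness/representation formula in Proposition \ref{nonhomogeneous-prop}, this will force $v_{\xi,\mu}$ to lie below that multiple of $u_{\xi,\mu}$, and then the desired pointwise inequalities follow.

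Concretely, in the case (HL1) I would take $w = u_{\xi,\mu}$, and in the case (HL2) I would take $w = (b_{iL}/c_{iM})\, u_{\xi,\mu}$ (one choice for each of $i=1,2$). Using that $u_{\xi,\mu}$ satisfies the eigenvalue equation \eqref{linear-ev-eq2} with principal eigenvalue $\bar\lambda_\xi(\mu)$, a direct substitution shows
\begin{equation*}
w_t - \mathcal{K}_{\xi,\mu} w + w - (a_2 - 2c_2 v^*)w + \bar\lambda_\xi(\mu)\, w - b_2 v^* u_{\xi,\mu}
= \alpha\Big[(\bar a_1 - \bar c_1 \bar v^*) - (a_2 - 2c_2 v^*) - \tfrac{b_2 v^*}{\alpha}\Big] u_{\xi,\mu},
\end{equation*}
with $\alpha = 1$ under (HL1) and $\alpha = b_{iL}/c_{iM}$ under (HL2). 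By Lemma \ref{linear-determinacy-lm1}(1) we have $\bar a_1 - \bar c_1 \bar v^* \ge a_1 - c_1 v^*$, and then hypothesis (HL1) respectively (HL2) makes the bracketed quantity nonnegative. Thus in both cases $w$ is a time-periodic supersolution of \eqref{linear-ev-eq4}.

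Set $z := w - v_{\xi,\mu}$. Then $z \in \mathcal{X}_p$ and solves $z_t = \mathcal{L}_0 z + \tilde h$, where $\mathcal{L}_0 \phi := \mathcal{K}_{\xi,\mu}\phi - \phi + (a_2 - 2c_2 v^* - \bar\lambda_\xi(\mu))\phi$ and $\tilde h \ge 0$. Lemma \ref{linear-determinacy-lm1}(4) says the principal spectrum point of $-\partial_t + \mathcal{L}_0$ is negative, so Proposition \ref{nonhomogeneous-prop} applies: $z$ is the unique time-periodic solution, and it is represented as $z(t,\cdot) = \int_{-\infty}^t \Phi_0(t,s)\tilde h(s,\cdot)\,ds$ where $\Phi_0$ is the (positivity-preserving) solution operator of the homogeneous equation. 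Hence $z \ge 0$, i.e.\ $v_{\xi,\mu} \le w$ everywhere.

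Finally, I would translate this dominance into the stated bounds. Under (HL1) we obtain $v_{\xi,\mu}\le u_{\xi,\mu}$, and since $b_i \ge c_i$ by (HL1), $c_i v_{\xi,\mu} \le c_i u_{\xi,\mu}\le b_i u_{\xi,\mu}$. Under (HL2) we obtain $c_{iM} v_{\xi,\mu} \le b_{iL} u_{\xi,\mu}$, and therefore $c_i v_{\xi,\mu}\le c_{iM} v_{\xi,\mu} \le b_{iL} u_{\xi,\mu}\le b_i u_{\xi,\mu}$. The only delicate point is Step 3: ensuring the supersolution conclusion is genuinely pointwise in the nonlocal setting. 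Because the comparison is obtained through the explicit representation in Proposition \ref{nonhomogeneous-prop} rather than a maximum principle, and because $\Phi_0$ is monotone (inheriting positivity from the positive kernel $e^{-\mu(y-x)\cdot\xi}\kappa(y-x)$), this representation argument replaces any delicate nonlocal maximum-principle verification and is where the choice of the perturbation $\bar a_1,\bar c_1,\bar v^*$ via Lemma \ref{linear-determinacy-lm1} becomes essential.
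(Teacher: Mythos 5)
Your proof is correct and follows essentially the same route as the paper: the same supersolutions ($u_{\xi,\mu}$ under (HL1), $\frac{b_{iL}}{c_{iM}}u_{\xi,\mu}$ under (HL2)), the same use of Lemma \ref{linear-determinacy-lm1}(1) and the hypotheses to verify the supersolution inequality, and the same final deduction of the stated bounds. The only difference is that you justify the step ``supersolution implies $v_{\xi,\mu}\le w$'' explicitly via the uniqueness and positive representation formula of Proposition \ref{nonhomogeneous-prop}, a step the paper asserts without detail; this is a reasonable way to fill that gap and does not change the argument's structure.
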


\begin{proof}
We first assume (HL1). We claim that $v=u_{\xi,\mu}(t,x)$ is a super-solution of
\eqref{linear-ev-eq4}. In fact,
\begin{align*}
&\p_t u_{\xi,\mu}(t,x)-\Big[\mathcal{K}_{\xi,\mu}u_{\xi,\mu}-u_{\xi,\mu}+(a_2(t,x)-2c_2(t,x)v^*(t,x))u_{\xi,\mu}\\
&\quad\qquad\quad\quad\quad -\bar \lambda_{\xi}(\mu) u_{\xi,\mu}+b_2(t,x)v^*(t,x)u_{\xi,\mu}(t,x)\Big]\\
&=\mathcal{K}_{\xi,\mu}u_{\xi,\mu}-u_{\xi,\mu}+(\bar a_1-\bar c_1\bar v^*(t,x))u_{\xi,\mu}-\bar \lambda_{\xi}(\mu) u_{\xi,\mu}\\
& \quad -\Big[\mathcal{K}_{\xi,\mu}u_{\xi,\mu}-u_{\xi,\mu}+(a_2(t,x)-2c_2(t,x)v^*(t,x))u_{\xi,\mu}\\
&\qquad \quad -\bar \lambda_{\xi}(\mu) u_{\xi,\mu}+b_2(t,x)v^*(t,x)u_{\xi,\mu}(t,x)\Big]\\
&\ge \Big[a_1(t,x)-c_1(t,x)v^*(t,x)-a_2(t,x)+2c_2(t,x)v^*(t,x)-b_2(t,x) v^*(t,x) \Big]u_{\xi,\mu}\\
&\ge 0\qquad\qquad\qquad \text{(by (HL1)).}
\end{align*}
Hence $v=u_{\xi,\mu}(t,x)$ is a super-solution of
\eqref{linear-ev-eq4}.  Therefore, we must have
$$
v_{\xi,\mu}(t,x)\le u_{\xi,\mu}(t,x).
$$
By (HL1) again, $b_1(t,x)\ge c_1(t,x)$ and $b_2(t,x)\ge c_2(t,x)$. It then follows that
$$c_1(t,x) v_{\xi,\mu}(t,x)\le b_1(t,x) u_{\xi,\mu}(t,x)$$ and
 $$c_2(t,x)v_{\xi,\mu}(t,x)\le b_2(t,x) u_{\xi,\mu}(t,x).$$

Next, we assume (HL2). We claim that both $v=\frac{b_{1L}}{c_{1M}} u_{\xi,\mu}$ and
$v=\frac{b_{2L}}{c_{2M}}u_{\xi,\mu}$ are super-solutions of \eqref{linear-ev-eq4}.
In fact,
\begin{align*}
&\p_t \frac{b_{1L}}{c_{1M}}u_{\xi,\mu}(t,x)-\Big[\mathcal{K}_{\xi,\mu}\frac{b_{1L}}{c_{1M}}u_{\xi,\mu}-\frac{b_{1L}}{c_{1M}}u_{\xi,\mu}
+(a_2(t,x)-2c_2(t,x)v^*(t,x))\frac{b_{1L}}{c_{1M}}u_{\xi,\mu}\\
&\quad\qquad\quad\quad\quad -\bar \lambda_{\xi}(\mu) \frac{b_{1L}}{c_{1M}}u_{\xi,\mu}+b_2(t,x)v^*(t,x)u_{\xi,\mu}(t,x)\Big]\\
&\ge \Big[a_1(t,x)-c_1(t,x)v^*(t,x)-a_2(t,x)+2c_2(t,x)v^*(t,x)-b_2(t,x) v^*(t,x) \frac{c_{1M}}{b_{1L}}\Big]\frac{b_{1L}}{c_{1M}}u_{\xi,\mu}\\
&\ge 0\qquad\qquad\qquad \text{(by (HL2)).}
\end{align*}
Hence $v=\frac{b_{1L}}{c_{1M}} u_{\xi,\mu}$ is a super-solution
of \eqref{linear-ev-eq4}. Similarly, we can prove that
$v=\frac{b_{2L}}{c_{2M}}u_{\xi,\mu}$ is a super-solution of \eqref{linear-ev-eq4}.
Therefore, we also have
$$
v_{\xi,\mu}(t,x)\le \frac{b_{1L}}{c_{1M}} u_{\xi,\mu}(t,x)
$$
and
$$
v_{\xi,\mu}(t,x)\le \frac{b_{2L}}{c_{2M}}u_{\xi,\mu}(t,x).
$$
It then follows that
$$c_1(t,x) v_{\xi,\mu}(t,x)\le b_1(t,x) u_{\xi,\mu}(t,x)$$ and
 $$c_2(t,x)v_{\xi,\mu}(t,x)\le b_2(t,x) u_{\xi,\mu}(t,x).$$
\end{proof}

We now prove Theorem \ref{linear-determinacy-thm}.

\begin{proof}[Proof of Theorem \ref{linear-determinacy-thm}]
First of all, for any $\epsilon>0$,
let $\bar a_1(t,x)$, $\bar c_1(t,x)$,  $\bar v^*(t,x)$, and $\mu^*(\xi)$ be as in Lemma \ref{linear-determinacy-lm1}.
For any $M>0$,
let
$$
\begin{cases}
\tilde u(t,x;M)= Me^{-\mu^*(\xi) \big(x\cdot\xi- \frac{\bar \lambda_{\xi}(\mu^*(\xi))}{\mu^*(\xi)} t\big)}u_{\xi,\mu^*(\xi)}(t,x)\cr
\tilde  v(t,x;M)=Me^{-\mu^*(\xi) \big(x\cdot\xi-\frac{\bar \lambda_{\xi}(\mu^*(\xi))}{\mu^*(\xi)}t\big)} v_{\xi,\mu^*(\xi)}(t,x)
\end{cases}
$$
and
$$
\begin{cases}
u^+(t,x;M)=\min\{ u^*(t,x), \tilde u(t,x;M)\}\cr
v^+(t,x;M)=\min\{ v^*(t,x),\tilde v(t,x;M)\}.
\end{cases}
$$

We claim that, for any given $(u_0,v_0)\in X_1^+\times X_2^+$, if
$$
u_0(x)\le u^+(0,x;M),\quad v_0(x)\le v^+(0,x;M)\quad \forall\,\, x\in\RR^N,
$$
then
\begin{equation}
\label{aux-linear-eq1}
 u(t,x;u_0,v_0)\le u^+(t,x;M),\quad  v(t,x;u_0,v_0)\le v^+(t,x;M)\quad \forall\,\, 0\le t\le T,\quad x\in\RR^N.
\end{equation}

Assume first that the claim holds. For any $(u_0,v_0)\in X^+_1(\xi)\times X^+_2(\xi)$, let $M_0>0$  be such that
$$
u_0(x)\le u^+(0,x;M_0),\quad v_0(x)\le v^+(0,x;M_0)\quad \forall\,\, x\in\RR^N.
$$
Then by \eqref{aux-linear-eq1},
$$
 u(t,x;u_0,v_0)\le u^+(t,x;M_0),\quad  v(t,x;u_0,v_0)\le v^+(t,x;M_0)\quad \forall\,\, 0\le t\le T,\quad x\in\RR^N.
 $$
Let
$$
M_1=M_0e^{\bar\lambda_{\xi}(\mu^*(\xi))T}.
$$
Then
$$
u(T,x;u_0,v_0)\le u^+(0,x;M_1),\quad v(T,x;u_0,v_0)\le v^+(0,x;M_1)\quad \forall\,\, x\in\RR^N.
$$
By \eqref{aux-linear-eq1} again,
$$
u(t+T,x;u_0,v_0)=u(t,x;u(T,\cdot;u_0,v_0),v(T,\cdot;u_0,v_0))\le u^+(t,x;M_1)=u^+(t+T,x;M_0)
$$
and
$$
v(t+T,x;u_0,v_0)=v(t,x;u(T,\cdot;u_0,v_0),v(T,\cdot;u_0,v_0))\le v^+(t,x;M_1)=v^+(t+T,x;M_0)
$$
for $t\in [0,T]$ and $x\in\RR^N$.
Hence
$$
 u(t,x;u_0,v_0)\le u^+(t,x;M_0),\quad  v(t,x;u_0,v_0)\le v^+(t,x;M_0)\quad \forall\,\, 0\le t\le 2T,\quad x\in\RR^N.
 $$
Continuing the above process, we have
$$
 u(t,x;u_0,v_0)\le u^+(t,x;M_0),\quad  v(t,x;u_0,v_0)\le v^+(t,x;M_0)\quad \forall\,\, t\ge 0,\quad x\in\RR^N.
 $$
Then for any $c>\frac{\bar \lambda_{\xi}(\mu^*(\xi))}{\mu^*(\xi)}$,
$$
\limsup_{x\cdot\xi\ge ct, t\to\infty}[u(t,x;u_0,v_0)+v(t,x;u_0,v_0)]=0
$$
and hence
$$
c_{\sup}^*(\xi)\le \frac{\bar \lambda_{\xi}(\mu^*(\xi))}{\mu^*(\xi)}\le \bar c_{\inf}^*(\xi)+\epsilon.
$$
By Theorem \ref{spreading-speed-thm1}(2),
$$
c_{\inf}^*(\xi)\ge \bar c_{\inf}^*(\xi).
$$
Letting $\epsilon\to 0$, we have
$$
c_{\sup}^*(\xi)=
c_{\inf}^*(\xi)= \bar c_{\inf}^*(\xi).
$$
Thus the theorem follows.

We now prove that the claim holds. To this end,
choose $\bar M>0$ such that
\begin{equation}
\label{aux-linear-eq2-1}
\begin{cases}
\bar M-1+ a_1(t,x)-2 \tilde d\cdot b_1(t,x)  u^*(t,x)-c_1(t,x)v^*(t,x)>0\cr
\bar M-1+a_2(t,x)- \tilde d\cdot b_2(t,x) u^*(t,x)-2c_2(t,x)v^*(t,x)>0
\end{cases}
\end{equation}
for all $t\in\RR$ and $x\in\RR^N$, where
\begin{align*}
\tilde d=&\max\Big\{1, \frac{u^*(t,x)}{u^*(\tau,x)}\cdot\frac{u_{\xi,\mu^*(\xi)}(\tau,x)}{u_{\xi,\mu^*(\xi)}(t,x)}\cdot e^{\bar \lambda_{\xi}(\mu^*(\xi))(\tau-t)}, \frac{ v^*(t,x)}{ u^*(\tau,x)}\cdot\frac{u_{\xi,\mu^*(\xi)}(\tau,x)}
{v_{\xi,\mu^*(\xi)}(t,x)}\cdot e^{\bar \lambda_{\xi}(\mu^*(\xi)) (\tau-t)}\\
\\
&\qquad\qquad\qquad\qquad  \,\Big |\,  t-T\le \tau\le t,\,\, t,\tau\in\RR,\,\, x\in\RR^N\Big\}.
\end{align*}
Let
$$
\bar u(t,x;M,\bar M)= e^{\bar Mt}\tilde u(t,x;M),\quad
\bar v(t,x;M,\bar M)=e^{\bar Mt}\tilde u(t,x;M)
$$
and
$$\bar u^*(t,x;\bar M)=e^{\bar Mt} u^*(t,x),\quad v^*(t,x;\bar M)=e^{\bar Mt}v^*(t,x).
$$
Let
\begin{equation*}
\begin{cases}
\bar u^+(t,x;M,\bar M)=\min\{\bar u^*(t,x;\bar M), \bar u(t,x;M,\bar M)\}\cr
\bar v^+(t,x;M,\bar M)=\min\{\bar v^*(t,x;M),\bar v(t,x;M,\bar M)\}
\end{cases}
\end{equation*}
and
\begin{equation*}
\begin{cases}
$$
\bar u(t,x;u_0,v_0,\bar M)=e^{\bar Mt}u(t,x;u_0,v_0)\cr
 \bar v(t,x;u_0,v_0,\bar M)=e^{\bar Mt} v(t,x;u_0,v_0).
\end{cases}
\end{equation*}
It suffices to prove that, for any $(u_0,v_0)\in X_1^+\times X_2^+$, if
$$
u_0(x)\le \bar u^+(0,x;M,\bar M),\quad v_0(x)\le \bar v^+(0,x;M,\bar M)\quad \forall\,\, x\in\RR^N,
$$
then
\begin{equation}
\label{aux-linear-eq3}
\bar u(t,x;u_0,v_0,\bar M)\le \bar u^+(t,x;M,\bar M),\quad \bar v(t,x;u_0,v_0,\bar M)\le \bar v^+(t,x;M,\bar M)
\end{equation}
for $0\le t\le T$ and $x\in\RR^N$.

In the following, if no confusion occurs, we write
$\bar u(t,x;M,\bar  M)$, $\bar u^*(t,x;M)$, $\bar u^+(t,x;M,\bar M)$, $\bar v(t,x;M,\bar M)$,
$\bar v^*(t,x;M)$, and  $v^+(t,x;M,\bar M)$ as $\bar u(t,x)$, $\bar u^*(t,x)$, $\bar u^+(t,x)$, $\bar v(t,x)$,
$\bar v^*(t,x)$, and  $\bar v^+(t,x)$, respectively.

Let
$$
\bar  f(t,x,u,v)=u\Big(\bar M-1+a_1(t,x)-e^{-\bar Mt}b_1(t,x)u-c_1(t,x)v^*(t,x)+e^{-\bar Mt}c_1(t,x)v\Big)
$$
and
$$
\bar  g(t,x,u,v)=b_2(t,x)\Big(v^*(t,x)-e^{-\bar Mt}v\Big)u+v\Big(\bar M-1+a_2(t,x)-2c_2(t,x)v^*(t,x)+e^{-\bar Mt}c_2(t,x)v\Big).
$$
Note that
\begin{equation}
\label{auux-linear-eq1}
\begin{cases}
\bar f_v(t,x,u,v)\ge 0\quad \forall\,\, t\in\RR,\,\, x\in\RR^N,\,\, u\ge 0\cr
\bar g_u(t,x,u,v)\ge 0\quad \forall\,\, 0\le v\le \bar v^*(t,x),\,\, t\in\RR,\,\, x\in\RR^N.
\end{cases}
\end{equation}
By \eqref{aux-linear-eq2-1},
\begin{equation}
\label{auux-linear-eq2}
\bar f_u(t,x,u,v)\ge 0,\quad  \bar g_v(t,x,u,v)\ge 0
\end{equation}
for $0\le u\le  \tilde d \bar u^*(t,x)$, $v\ge 0$, $t\in\RR$, $x\in\RR^N$.

We first show that  for any $(t,x)\in\RR\times\RR^N$, we have
\begin{equation}
\label{auux-linear-eq3}
b_2(t,x)\bar u^+(t,x)\ge c_2(t,x)\bar v^+(t,x).
\end{equation}
If $(t,x)$ is such that $\bar v^+(t,x)=\bar v^*(t,x)$ and $\bar u^+(t,x)=\bar u^*(t,x)$, then by (HL0),
\begin{equation*}
b_2(t,x)\bar u^+(t,x)\ge c_2(t,x) \bar v^+(t,x).
\end{equation*}
If $(t,x)$ is such that $\bar v^+(t,x)=\bar v(t,x)\le \bar v^*(t,x)$, then by Lemma \ref{linear-determinacy-lm2},
$$
 b_2(t,x) \bar u^+(t,x)=b_2(t,x)\min\{\bar u(t,x),\bar u^*(t,x)\}\ge c_2(t,x) \bar v^+(t,x).
$$
If $(t,x)$ is such that $\bar u^+(t,x)=\bar u(t,x)\le \bar u^*(t,x)$, then by Lemma \ref{linear-determinacy-lm2}  again,
$$
b_2(t,x) \bar u^+(t,x)\ge c_2(t,x)\bar v(t,x)\ge c_2(t,x)\bar v^+(t,x).
$$
Hence for any $(t,x)\in\RR\times\RR^N$, \eqref{auux-linear-eq3} holds.

By \eqref{auux-linear-eq1} and \eqref{auux-linear-eq2},
\begin{align}
\label{aux-linear-eq4-2}
\begin{cases}
\bar u^*(t,x)&=\bar u^*(0,x)+\int_0^t \Big[\int_{\RR^N}\kappa (y-x)\bar u^*(\tau,y)dy+\bar f(\tau,x,\bar u^*(\tau,x),\bar v^*(\tau,x)\Big]d\tau\\
&\ge  \bar u^+(0,x)+\int_0^t \Big[\int_{\RR^N}\kappa (y-x)\bar u^+(\tau,y)dy+\bar f(\tau,x,\bar u^+(\tau,x),\bar v^+(\tau,x)\Big]d\tau\\
\bar v^*(t,x)&=\bar v^*(0,x)+\int_0^t \Big[\int_{\RR^N}\kappa (y-x)\bar v^*(\tau,y)dy+\bar g(\tau,x,\bar u^*(\tau,x),\bar v^*(\tau,x)\Big]d\tau\\
&\ge \bar v^+(0,x)+\int_0^t \Big[\int_{\RR^N}\kappa (y-x)\bar v^+(\tau,y)dy+\bar g(\tau,x,\bar u^+(\tau,x),\bar v^+(\tau,x)\Big]d\tau
\end{cases}
\end{align}
for all $(t,x)\in\RR\times \RR^N$.
Note  that if $(t,x)\in [0,T]\times\RR^N$ is such that $\bar u(t,x)=\bar u^+(t,x)\le \bar u^*(t,x)$, then
\begin{equation*}
\bar u(\tau ,x)\le e^{\bar \lambda_{\xi}(\mu^*(\xi)) (\tau-t)}\cdot\frac{ u^*(t,x)}{ u^*(\tau,x)}\cdot\frac{u_{\xi,\mu^*(\xi)}(\tau,x)}
{u_{\xi,\mu^*(\xi)}(t,x)}\cdot \bar u^*(\tau,x)\le \tilde d \cdot\bar u^*(\tau,x)
\end{equation*}
for $0\le\tau\le t$. Hence
\begin{align*}
\bar u(t,x)&=\bar u(0,x)+\int_0^t \Big[\int_{\RR^N}\kappa(y-x)\bar u(\tau,y)dy+(\bar M-1+\bar a_1(\tau,x)-\bar c_1(\tau,x)\bar v^*(\tau,x))\bar u\Big]d\tau\\
&\ge \bar  u(0,x)+\int_0^t \Big[\int_{\RR^N}\kappa(y-x)\bar u(\tau,y)dy
+\bar f(\tau,x,\bar u(\tau,x),\bar v(\tau,x))\Big]d\tau\qquad \text{(by Lemma \ref{linear-determinacy-lm2})}\\
&\ge \bar u^+(0,x)+\int_0^t \Big[\int_{\RR^N}\kappa(y-x)\bar u^+(\tau,y)dy+\bar f(\tau,x,\bar u^+(\tau,x),\bar v^+(\tau,x))\Big]d\tau\quad
\text{(by \eqref{auux-linear-eq1}, \eqref{auux-linear-eq2})}.
\end{align*}
If $(t,x)\in [0,T]\times\RR^N$ is such that $\bar v(t,x)=\bar v^+(t,x)$, then
$$
\bar u(\tau ,x)\le e^{\bar \lambda_{\xi}(\mu^*(\xi)) (\tau-t)}\cdot\frac{ v^*(t,x)}{ u^*(\tau,x)}\cdot\frac{u_{\xi,\mu^*(\xi)}(\tau,x)}
{v_{\xi,\mu^*(\xi)}(t,x)}\cdot \bar u^*(\tau,x)\le \tilde d \cdot\bar u^*(\tau,x)
$$
Hence
\begin{align*}
\bar v(t,x)&=\bar v(0,x)+\int_0^t \Big[\int_{\RR^N}\kappa(y-x)\bar v(\tau,y)dy+b_2(\tau,x)v^*(\tau,x)\bar u(\tau,x)\\
&\qquad\qquad\qquad\qquad +(\bar M-1+a_2(\tau,x)-2c_2(\tau,x)v^*(\tau,x))\bar v\Big]d\tau\\
&\ge \bar v^+(0,x)+\int_0^t \Big[\int_{\RR^N}\kappa(y-x)\bar v^+(\tau,y)dy+b_2(\tau,x)v^*(\tau,x)\bar u^+(\tau,x)\\
&\qquad\qquad\qquad\qquad +(\bar M-1+a_2(\tau,x)-2c_2(\tau,x)v^*(\tau,x))\bar v^+(\tau,x)\Big]d\tau\qquad \text{(by \eqref{aux-linear-eq2-1})}\\
&\ge \bar v^+(0,x) +\int_0^t \Big[\int_{\RR^N}\kappa(y-x)\bar v^+(\tau,y)dy
 + \bar g(\tau,x,\bar u^+(\tau,x),\bar v^+(\tau,x))\Big]d\tau\qquad \text{(by \eqref{auux-linear-eq3}).}\\
\end{align*}
It then follows that for any $(t,x)\in [0,T]\times \RR^N$,
\begin{equation}
\label{aux-linear-eq5}
\begin{cases}
\bar u^+(t,x)\ge  \bar u^+(0,x)+\int_0^t \Big[\int_{\RR^N}\kappa (y-x)\bar u^+(\tau,y)dy+\bar f(\tau,x,\bar u^+(\tau,x),\bar v^+(\tau,x)\Big]d\tau\cr
\bar v^+(t,x)\ge \bar v^+(0,x)+\int_0^t \Big[\int_{\RR^N}\kappa (y-x)\bar v^+(\tau,y)dy+\bar g(\tau,x,\bar u^+(\tau,x),\bar v^+(\tau,x)\Big]d\tau
\end{cases}
\end{equation}

For given $(u_0,v_0)$ with $u_0\le u^+(0,\cdot)$ and $v_0\le v^+(0,\cdot)$,
put
$$
\bar u_0(t,x)=e^{\bar Mt}u(t,x;u_0,v_0),\quad \bar v_0(t,x)=e^{\bar Mt} v(t,x;u_0,v_0).
$$
Then for all $t\ge 0$ and $x\in\RR^N$,
\begin{equation}
\label{aux-linear-eq6}
\begin{cases}
\bar u_0(t,x)=\bar u_0(x) +\int_0^t \Big[\int_{\RR^N}\kappa(y-x) \bar u_0(\tau,y)dy+
\bar f(\tau,x,\bar u_0(\tau,x),\bar v_0(\tau,x))\Big]d\tau\cr
\bar v_0(t,x)=\bar v_0(x) +\int_0^t \Big[\int_{\RR^N}\kappa(y-x)\bar v_0(\tau,y)dy
+\bar g(\tau,x,\bar u_0(\tau,x),\bar v_0(\tau,x))\Big]d\tau.
\end{cases}
\end{equation}

Let
$$
\tilde u(t,x)=\bar u^+(t,x)-\bar u_0(t,x),\quad \tilde v(t,x)=\bar v^+(t,x)-\bar v_0(t,x).
$$
By \eqref{aux-linear-eq5} and \eqref{aux-linear-eq6},
$$
\begin{cases}
\tilde u(t,x)\ge \tilde u(0,x)+\int_0^t \Big[\int_{\RR^N} \kappa(y-x)\tilde u(\tau,y)dy+\tilde a_1(\tau,x)\tilde u(\tau,x)+\tilde b_1(\tau,x)\tilde v(\tau,x)\Big]d\tau\cr
\tilde v(t,x)\ge\tilde v(0,x)+ \int_0^t \Big[\int_{\RR^N}\tilde \kappa(y-x)\tilde v(\tau,y)dy+\tilde a_2(\tau,x)\tilde u(\tau,x)+\tilde b_2(\tau,x)\tilde v(\tau,x)\Big]d\tau,
\end{cases}
$$
where
$$
\tilde a_1(t,x)=\bar f_u(t,x,\tilde u_1(t,x),\tilde v_1(t,x)),\quad \tilde b_1(t,x)=\bar f_v(t,x,\tilde u_1(t,x),\tilde v_1(t,x)),
$$
and
$$
\tilde a_2(t,x)=\bar g_u(t,x, \tilde u_2(t,x),\tilde v_2(t,x)),\quad
\tilde b_2(t,x)=\bar g_v(t,x,\tilde u_2(t,x),\tilde v_2(t,x))
$$
for some $\tilde u_1(t,x)$, $\tilde u_2(t,x)$ between $\bar u^+(t,x)$ and $\bar u_0(t,x)$,
and some $\tilde v_1(t,x)$, $\tilde v_2(t,x)$ between $\bar v^+(t,x)$ and $\bar v_0(t,x)$.
By \eqref{auux-linear-eq1} and \eqref{auux-linear-eq2},
$$
\tilde a_i(t,x)\ge 0,\quad \tilde b_i(t,x)\ge 0\quad \forall\,\, t\in [0,T],\quad x\in\RR^N,\quad i=1,2.
$$
Then
by comparison principle for cooperative systems,
$$
\tilde u(t,x)=\bar u^+(t,x)-\bar u_0(t,x)\ge 0,\quad \tilde v(t,x)=\bar v^+(t,x)-\bar v_0(t,x)\ge 0
$$
for $t\in [0,T]$ and $x\in\RR^N$, which implies \eqref{aux-linear-eq3} and then \eqref{aux-linear-eq1}.
This completes the proof of the claim and then the theorem.
\end{proof}

\section{Remarks on Spreading Speeds of Two Species Competition Systems with
Random and Discrete Dispersals in Periodic Habitats}

We remark that the methods developed in this paper can be applied to the study of
spreading speeds and linear determinacy for the following two species competition systems,
\begin{equation}
\label{random-dispersal-eq}
\begin{cases}
u_t=d_1\Delta u+u(a_1(t,x)-b_1(t,x)u-c_1(t,x)v),\quad x\in\RR^N\cr
v_t=d_2\Delta v+v(a_2(t,x)-b_2(t,x)u-c_2(t,x)v),\quad x\in\RR^N,
\end{cases}
\end{equation}
and
\begin{equation}
\label{discrete-dispersal-eq}
\begin{cases}
\dot u_j(t)=d_1\sum_{k\in K}(u_{j+k}(t)-u_j(t))+u_j(t)(a_1(t,j)-b_1(t,j)u_j(t)-c_1(t,j)v_j(t)),\quad j\in\ZZ^N\cr
\dot v_j(t)=d_2\sum_{k\in K}(v_{j+k}(t)-v_j(t))+v_j(t)(a_2(t,j)-b_2(t,j)u_j(t)-c_2(t,j)v_j(t)),\quad j\in \ZZ^N,
\end{cases}
\end{equation}
where $K$ is a bounded subset of $\ZZ^N$.

In particular, the definition of spreading speed interval for \eqref{main-eq} (see Definition \ref{spreading-speed-cooperative})
can be applied to \eqref{random-dispersal-eq} and \eqref{discrete-dispersal-eq}. Similar results to Theorems \ref{spreading-speed-thm1},
\ref{spreading-speed-thm2}, and \ref{linear-determinacy-thm} can also be obtained for \eqref{random-dispersal-eq} and \eqref{discrete-dispersal-eq}.

As mentioned in the introduction, the authors of \cite{YuZh} have been studying the spreading speeds and traveling waves of \eqref{random-dispersal-eq}.
The results in \cite{FaZh} and \cite{WeLeLi} apply to \eqref{random-dispersal-eq} and \eqref{discrete-dispersal-eq} with spatial homogeneous
and time periodic coefficients. One is also referred to \cite{GuWu} and references therein for
the study of traveling wave solutions of  \eqref{discrete-dispersal-eq} with space and time independent coefficients.

\end{document}